\theoremstyle{plain}
\newtheorem{thm}{Theorem}[section]
\newtheorem{defn}{Definition}
\newtheorem{lem}[thm]{Lemma}
\newtheorem{cor}[thm]{Corollary}
\newtheorem{prop}[thm]{Proposition}
\theoremstyle{remark}
\newtheorem{rem}{\bf{Remark}}
\numberwithin{equation}{section}
\newcommand{\N}{\mathbb{N}}
\newcommand{\R}{\mathbb{R}}
\newcommand{\bS}{\mathbb{S}}
\newcommand{\cF}{\mathcal{F}}
\newcommand{\USC}{{\rm USC\,}}
\newcommand{\LSC}{{\rm LSC\,}}
\newcommand{\Li}{L^{\infty}}
\newcommand{\Lip}{{\rm Lip\,}}
\newcommand{\al}{\alpha}
\newcommand{\gam}{\gamma}
\newcommand{\del}{\delta}
\newcommand{\ep}{\varepsilon}
\newcommand{\kap}{\kappa}
\newcommand{\lam}{\lambda}
\newcommand{\sig}{\sigma}
\newcommand{\Gam}{\Gamma}
\newcommand{\ol}{\overline}
\newcommand{\ul}{\underline}
\newcommand{\pl}{\partial}
\newcommand{\supp}{{\rm supp}\,}
\newcommand{\inter}{{\rm int}\,}
\newcommand{\Div}{{\rm div}\,}
\newcommand{\tr}{{\rm tr}\,}
\begin{document}
\title[On asymptotic speed of solutions to MCF equations with driving and source terms]
{On asymptotic speed of solutions to\\ 
level-set mean curvature flow equations\\ 
with driving and source terms}


\date{\today}

\thanks{
The work of YG was partially supported by Japan Society for the Promotion of Science (JSPS) through grants KAKENHI \#26220702, \#23244015, \#25610025.
The work of HM was partially supported by KAKENHI \#15K17574, \#26287024, \#23244015.
The work of HT was partially supported by
NSF grant DMS-1361236.}

\author[Y. Giga]{Yoshikazu Giga}
\address[Y. Giga]{
Graduate School of Mathematical Sciences, 
University of Tokyo 
3-8-1 Komaba, Meguro-ku, Tokyo, 153-8914, Japan}
\email{labgiga@ms.u-tokyo.ac.jp}

\author[H. Mitake]{Hiroyoshi Mitake}
\address[H. Mitake]{
Institute for Sustainable Sciences and Development,
Hiroshima University 1-4-1 Kagamiyama, Higashi-Hiroshima-shi 739-8527, Japan.}
\email{hiroyoshi-mitake@hiroshima-u.ac.jp}

\author[H. V. Tran]{Hung V. Tran}
\address[H. V. Tran]
{Department of Mathematics,
University of Wisconsin,
480 Lincoln Dr.,
Madison, WI 53706, USA.}
\email{hung@math.wisc.edu}

\keywords{Forced Mean Curvature equation, Discontinuous source term, Crystal growth, Asymptotic speed, Obstacle problem}

\subjclass[2010]{
35B40, 
35K93, 
35K20. 
}

\begin{abstract}
We investigate a model equation in the crystal growth, which is 
described by a level-set mean curvature flow equation  
with driving and source terms. 
We establish the well-posedness of solutions, and study 
the asymptotic speed. 
Interestingly, a new type of nonlinear phenomena in terms of asymptotic speed of solutions appears, 
which is very sensitive to the shapes of source terms. 
\end{abstract}

\maketitle
\tableofcontents


\section{Introduction}
\subsection{Problem and Background}
In this paper, we study a level-set forced mean curvature flow equation 
motivated by a crystal growth phenomenon described below. 
The crystal grows in both vertical and horizontal directions. 
The vertical direction growth is stimulated by a nucleation, and the horizontal one is given 
by a surface evolution. 
We assume further that the surface evolution is 
described by the mean curvature with a constant force. 
Under these assumptions, the equation of interest is
\begin{equation}
{{\rm(C)}\qquad} \label{eq:1}
\begin{cases}
\displaystyle 
u_t-\left(\Div\left(\frac{Du}{|Du|}\right)+1\right)|Du|=c \mathbf{1}_E
 \quad &\text{in} \ \R^n\times(0,\infty), \\
u(\cdot,0)=0 \quad &\text{on} \ \R^n,
\end{cases}
\end{equation}
where $c>0$ and $E\subset \R^n$ are a given constant and a compact set respectively, and 
\[
\mathbf{1}_{E}(x):=
\left\{
\begin{array}{ll}
1 & 
\quad \textrm{if} \ x\in E, \\
0 & 
\quad \textrm{if} \ x\not\in E.  
\end{array}
\right. 
\]
Here for $(x,t)\in \R^n \times [0,\infty)$, $u(x,t)$ is the height of the crystal at location $x$ at time $t$.
\vspace*{1cm}
\hspace*{-1cm}

\begin{tabular}{cc}
\begin{minipage}{0.45\hsize}
\begin{center}
\hspace*{-1cm}
\includegraphics[width=6cm, bb=0 0 900 600]{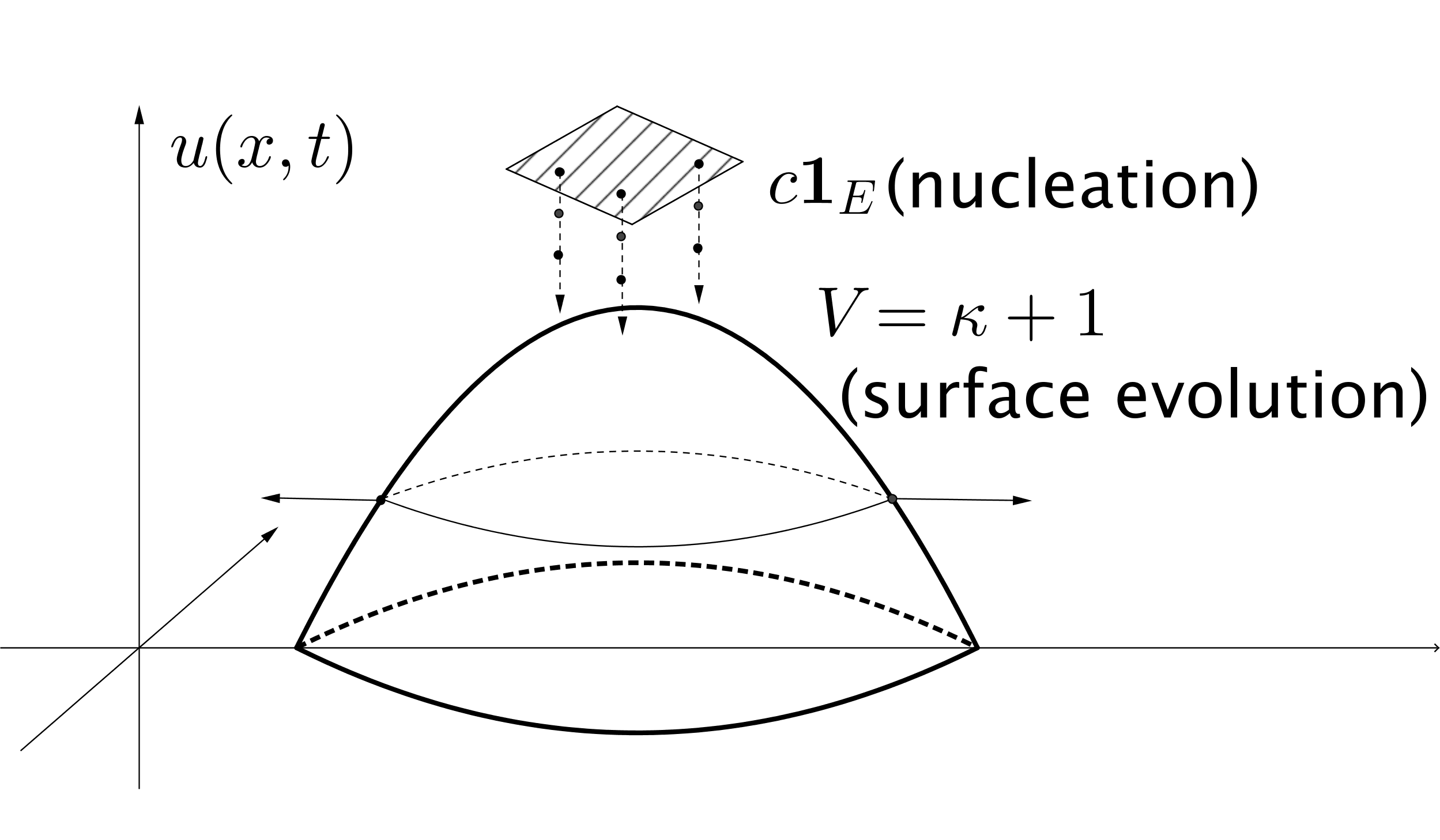}
\end{center}
\hspace*{2cm}
 \end{minipage}
\hspace*{1cm}
\begin{minipage}{0.47\hsize}
\vspace*{-1cm}
\begin{flushleft}
\noindent
Hypotheses: \\
1. There is a source term $c\mathbf{1}_E$ which stimulates 
the nucleation. \\
2. Each level set evolves following the law $V=\kappa+1$, 
where $\kappa$ is its mean curvature in the direction of the outer normal 
vector.
\end{flushleft}
\end{minipage}
\end{tabular}
\begin{center}
\captionof{figure}{Image of the crystal growth}
\end{center}

Let us explain the background of our model from the theory of crystal growth \cite{BCF}.
 We consider a perfectly flat surface of a crystal immersed in a supersaturated media.
 The crystal grows by catching adatoms.
 Assume that there are no dislocations on the surface so that no spiral growth is expected.
 There are several models to explain the growth of a perfectly flat crystal surface and their theories are often called two-dimensional nucleation growth theory \cite{OR,M}.
 They are roughly classified into single nucleation growth and multinucleation growth.
 Single nucleation growth is easy to explain.
 A nucleation starts somewhere and it spreads across the surface at an infinity velocity and the surface becomes flat again and waits next nucleation.
Multinucleation growth model was originally introduced by Hillig \cite{H} and developed in many ways especially to calculate the growth rate of the crystal surface, e.g.\ \cite{YK}.
There are several multinucleation growth models in the literature including ours, 
which is considered as a kind of the birth and spread model \cite[Section 2.6]{OR} originally proposed by Nielsen \cite{N}.

In the birth and spread model, an island of a layer of width $d$ of molecules, which looks like a pancake, is attached on the crystal surface
at the first nucleation.
 This pancake-like shape spreads to the surface with a finite speed
\begin{equation}\label{surface-ev}
V=v_\infty(\rho_c \kappa+1),  
\end{equation}
where $V$ and $\kappa$, respectively, denote the outer normal velocity of the lateral part of the island, and the outward curvature of the curve bounding the set in the plane where the island and the original crystal surface touches.
 The constants $\rho_c>0$ and $v_\infty>0$ are the critical radius and the step velocity \cite{BCF}, respectively. 
The second nucleation will start either on the top of the island or on the crystal surface and it also spreads afterwards.
The place where each nucleation starts is often randomly distributed if the concentration in supersaturated media is uniform.
 These phenomena are experimentally observed in the literature \cite{SZNYF} but it is less often than spiral growth.

Our model is considered as a continuum limit of a kind of the birth and spread model.
 Actually, from the continuum point of view, it is derived heuristically from the Trotter-Kato approximation (see Section \ref{sec:TK} for more details). 
 Let $E \subset \R^n$ be a set where nucleation starts  with supersaturation $c$.
 Let $u$ denote the height of the crystal surface, and initially the crystal surface is perfectly flat, i.e, $u(\cdot,0)\equiv0$.
 We assume that outside $E$ there is no supersaturation or no way to nucleate.
 Within very short time $d=\Delta t>0$ the crystal island forms and the shape of crystal becomes the graph of $d1_E$, i.e.\ $u(\cdot,d)=d1_E$.
During next short time $\Delta t$ this island spreads horizontally to the crystal surface with the velocity $V=v_\infty(\rho_c \kappa+1)$ 
and we get a profile $u(\cdot,2d)=d1_{E(\Delta t)}$, where $E(s)$ denotes the solution of the above eikonal-curvature flow \eqref{surface-ev} starting from $E$ at time $s>0$.
 In the next stage another nucleation starts again in $E$ and all pancake shapes spread with the same eikonal-curvature flow.
 By the repetition of this process we obtain a solution of the Trotter-Kato approximation of our original problem.
 Our original equation is its limit as $\Delta t \to 0$.
 In fact, equation \eqref{eq:1} is obtained when $v_\infty=1$ and $\rho_c=1$.
 
In the literature, it is sometimes assumed as in \cite[Section 2.6]{OR} that $\rho_c=0$ since $\rho_c$ is very small, and in this case, 
our continuum model is reduced to the Hamilton-Jacobi equation with discontinuous source term studied recently by Giga and Hamamuki \cite{GH},
 which justifies a proposed solution of T. P. Shulze and R. V. Kohn \cite{SK} modeling spiral growth phenomena consisting of a pair of screw dislocations with opposite orientation.
 In our model we take the curvature effect into account so that the model is more realistic. 
Indeed, due to the curvature effect, an interesting physical phenomenon can be explained, which is new and intuitive. 
Moreover, from mathematical point of view, we will face many serious difficulties 
because of the appearance of the curvature term.

\subsection{Main results}
Mathematically, equation (C) has two main parts. 
One comes from the nucleation described by the right hand side of 
(C), and the other one comes from the surface evolution. 
In general, the surface evolution of each level set 
is not only nonlinear but also nonhomogeneous and not monotone.
This makes the interaction between the nucleation and the surface evolution extremely nonlinear.
In a sense, in order to understand the behavior of solutions of (C), 
we need to understand the double nonlinear effects coming from the surface evolution and the interaction,
which will be explained more clearly in Sections \ref{sec:TK} and \ref{sec:rough-est}.

In this paper, we first establish the well-posedness of solutions to (C) as the 
equation is not so standard because of the appearance of the discontinuous source term in the right hand side. 
In particular, we show that the maximal solution is unique.
Our main goal then  is to study the asymptotic growth speed of the maximal solution $u$ to (C), i.e., 
\[
\lim_{t\to\infty}\frac{u(x,t)}{t}, 
\]
which describes the speed of the crystal growth. 
It is important noting that
the growth of $u$ can be seen in a heuristic way 
via the Trotter-Kato product formula in Section \ref{sec:TK}
which helps a lot in analyzing the double nonlinear effects.

In a specific case where $E=\ol{B}(0,R_0)$ for some given $R_0>0$, 
the growth is completely understood and is analyzed in Section \ref{sec:sphere}.
Here $B(x,R)$ denotes the open ball of radius $R>0$ centered at $x \in \R^n$.
The analysis could be done in an explicit way here as the surface evolution is homogeneous.
More precisely, balls remain balls after the surface evolution.
This also makes the interaction between the nucleation and the surface evolution simple and clear.
The governing equation (C) becomes a first order equation but it is noncoercive. 
As studied in \cite{GLM1, GLM2} the asymptotic speed may depend on the place when the equation is noncoercive. 
See also \cite{YGR} for its physical background.
We prove the following theorem through Propositions \ref{prop:case1}, \ref{prop:case2} and \ref{prop:case3}.

\begin{thm}\label{thm:main1}
Assume that $E=\ol{B}(0,R_0)$ for some $R_0>0$ fixed.
Let $u$ be the maximal solution of {\rm (C)}. The followings hold
\begin{itemize}
\item[(i)] If $R_0<n-1$, then $u$ has the formula \eqref{func:u-case1} and is bounded on $\R^n \times [0,\infty)$. In particular,
\[
\lim_{t \to \infty} \frac{u(x,t)}{t}=0 \quad \text{uniformly for} \ x \in \R^n.
\]

\item[(ii)] If $R_0>n-1$, then $u$ has the formula \eqref{func:u-case2} and
\[
\lim_{t \to \infty} \frac{u(x,t)}{t}=c \quad \text{locally uniformly for} \ x \in \R^n.
\]

\item[(iii)] If $R_0=n-1$, then $u=ct\mathbf{1}_{\ol{B}(0,n-1)}$. In particular,
\[
\lim_{t \to \infty} \frac{u(x,t)}{t}=c \quad \text{uniformly for} \ x \in \ol{B}(0,n-1),
\]
and
\[
\lim_{t \to \infty} \frac{u(x,t)}{t}=0 \quad \text{ uniformly for} \ x \in \R^n \setminus \ol{B}(0,n-1).
\]
\end{itemize}
\end{thm}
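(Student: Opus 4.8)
The plan is to use the rotational symmetry of (C) to reduce the problem, when $E=\ol{B}(0,R_0)$, to an explicitly solvable one-dimensional free boundary problem governed by a single ODE, and then to treat the three regimes $R_0<n-1$, $R_0>n-1$, $R_0=n-1$ separately; these are Propositions \ref{prop:case1}, \ref{prop:case2} and \ref{prop:case3}.

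First I would record the structural reduction. Since $E=\ol{B}(0,R_0)$ and both the curvature operator and the source in (C) are invariant under rotations about the origin, the maximal solution $u$ is radially symmetric and nonincreasing in $|x|$; equivalently, for each level $a>0$ the set $\{u(\cdot,t)>a\}$ is a ball centered at the origin (or empty), since centered balls are carried to centered balls by the eikonal--curvature flow $V=\kappa+1$. This matches the Trotter--Kato picture of Section \ref{sec:TK}: a layer nucleated at time $\tau$ has, at a later time $t$, footprint $B(0,r(t-\tau))$, where $r=r(s)$ solves
\begin{equation}\label{eq:sketch-ode}
\dot r=1-\frac{n-1}{r},\qquad r(0)=R_0,
\end{equation}
the ODE obtained by inserting a radial level set into (C) outside $E$. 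Heuristically one then expects $u(x,t)=c\,\bigl|\{\tau\in(0,t):x\in B(0,r(t-\tau))\}\bigr|$; I would turn this into the explicit candidates \eqref{func:u-case1}, \eqref{func:u-case2} and $ct\mathbf{1}_{\ol{B}(0,n-1)}$, check directly that each candidate is a (possibly discontinuous) viscosity solution of (C), and show that it dominates every other solution, hence is the maximal one. The obstacle-problem characterization of the maximal solution, together with a comparison principle adapted to the discontinuous right-hand side, is the intended tool for the domination step.

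The three cases then reduce to the qualitative behaviour of \eqref{eq:sketch-ode}, whose right-hand side vanishes at $r=n-1$. If $R_0<n-1$, then $r$ decreases and reaches $0$ in finite time $T_0=T_0(n,R_0)$, so every layer has bounded lifespan: one gets $u(x,t)=c\min(t,s(|x|))$ for $|x|\le R_0$ and $u\equiv0$ elsewhere, where $s(\rho)\le T_0$ is the time for the radius in \eqref{eq:sketch-ode} to drop from $R_0$ to $\rho$; thus $u$ is bounded by $cT_0$ and stabilizes, so the asymptotic speed is $0$, which is (i). If $R_0>n-1$, then $r$ increases without bound, no layer ever disappears, $u(x,t)=ct$ for $|x|\le R_0$, and $u(x,t)=c\,(t-\theta(|x|))_+$ for $|x|>R_0$ with $\theta(\rho)$ the time for the radius to grow from $R_0$ to $\rho$; since $\theta$ is finite and locally bounded, dividing by $t$ and letting $t\to\infty$ gives speed $c$ locally uniformly, which is (ii). If $R_0=n-1$, then $r\equiv n-1$, every footprint equals $\ol{B}(0,n-1)$, and $u=ct\mathbf{1}_{\ol{B}(0,n-1)}$ follows immediately, giving (iii).

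The main obstacle I anticipate is not the ODE analysis but the bookkeeping at the discontinuities. In case (iii) the candidate jumps across the stationary sphere $\{|x|=n-1\}$, and already in (i)--(ii) the boundary of the footprint is a free boundary along which the candidate need not be $C^1$; verifying the viscosity sub- and supersolution inequalities at these sets, and above all proving maximality by comparing the explicit radial candidate with an arbitrary solution via the obstacle problem and a comparison argument robust to the discontinuous source, is the delicate part. A secondary point is to justify the ``balls stay balls'' reduction rigorously at the level of viscosity solutions rather than only for the Trotter--Kato approximants, for which the rotational symmetry of (C) together with the comparison principle should suffice.
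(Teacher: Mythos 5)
Your plan tracks the paper's proof (via Propositions~\ref{prop:case1}, \ref{prop:case2}, \ref{prop:case3}) essentially exactly: radial reduction, the ODE $\dot r = 1-(n-1)/r$ for the level-set radius, the same explicit candidates (your $c\min(t,s(|x|))$ and $c(t-\theta(|x|))_+$ reproduce \eqref{func:u-case1} and \eqref{func:u-case2}), verification of the viscosity inequalities at the kinks and free boundary via a jet lemma for radial corners (Lemma~\ref{lem:jet-radial}), and a maximality argument. The one ingredient you misdescribe is ``the obstacle-problem characterization of the maximal solution'' --- obstacle problems appear only in Section~\ref{sec:rough-est} and play no role here; the paper obtains $v\ge u$ by constructing supersolutions $u^\ep$ of problems whose source $g^\ep$ satisfies $(c\mathbf{1}_E)^\ast \le (g^\ep)_\ast$ (the continuous ramp $cI^\ep(|x|)$ in case (i), the open-ball indicator $c\mathbf{1}_{B(0,R_0+\ep)}$ in case (ii), and $c\mathbf{1}_{\ol B(0,r)}$ with $r\downarrow n-1$ in case (iii)), applying the weak comparison principle Proposition~\ref{prop:comparison} --- which demands $f^\ast\le g_\ast$ and so cannot be applied directly between two solutions of the same discontinuous (C) --- and then letting $\ep\to 0$; this approximation is precisely the ``comparison argument robust to the discontinuous source'' you rightly flag as the delicate step.
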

Some more general results for the case of inhomogeneous source terms which are radially symmetric are analyzed in Section \ref{subsec:opt} as well. 
See Theorem \ref{thm:h1} and Remark \ref{rem:general}. 
\smallskip

In the general case where $E$ is not radially symmetric, 
it turns out that a new type of nonlinear phenomena appears. 
Very roughly speaking, if the nucleation site $E$ is small enough, 
then the solution of (C) does not grow up globally as $t\to\infty$.
On the other hand, if the set $E$ is big enough, 
then the solution of (C) grows up locally uniformly in $\R^n$ as $t\to\infty$ 
with an asymptotic speed $c$, which is the rate of nucleation. 
If the set $E$ is of middle size in a sense, it seems that the asymptotic 
speed depends on its shape in a very delicate and sensitive way. 

It is worthwhile emphasizing here that we find such phenomena as a fact of 
experiment in the crystal growth.
In the experiment the set $E$ corresponding the places where adatoms are sprayed on crystal surfaces. 
Such a situation is quite popular for growth of metals in manufacturing technology of semiconductors although spreading mechanism has other effects. See e.g., \cite{ZW}. 
For such phenomena as well as quantum dots the curvature effect may not be neglected in two-dimensional setting.

We provide a framework to get estimates of growth rate in Sections \ref{sec:TK} and \ref{sec:rough-est}.
We then choose a representative case where 
$E$ is of square shape in $\R^2$ to study in details in Section \ref{sec:square}.
We establish the following theorem through Propositions \ref{prop:cor-square} and \ref{prop:middle}.

\begin{thm}\label{thm:main2}
Assume that $n=2$ and $E=\{(x_1,x_2)\,:\,|x_i| \leq d, \ i=1,2\}$ for some $d>0$ fixed.
Let $u$ be the maximal solution of {\rm (C)}. The followings hold
\begin{itemize}
\item[(i)] If $d<1/\sqrt{2}$, then $u$ is bounded on $\R^2 \times [0,\infty)$. In particular,
\[
\lim_{t \to \infty} \frac{u(x,t)}{t}=0 \quad \text{uniformly for} \ x \in \R^2.
\]

\item[(ii)] If $d>1$, then 
\[
\lim_{t \to \infty} \frac{u(x,t)}{t}=c \quad \text{locally uniformly for} \ x \in \R^2.
\]

\item[(iii)] If $d=1$, then
\[
\lim_{t \to \infty} \frac{u(x,t)}{t}=c \quad \text{uniformly for} \ x \in \ol{B}(0,1).
\]
\item[(iv)] If $1/\sqrt{2}<d<1$, then there exist $\al,\beta$ such that $0<\al<\beta<c$ and
\[
\al \leq \liminf_{t \to \infty} \frac{u(x,t)}{t} \leq \limsup_{t \to \infty} \frac{u(x,t)}{t} \leq \beta
\quad \text{locally uniformly for} \ x \in \R^2.
\]
\end{itemize}
\end{thm}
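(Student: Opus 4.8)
The plan is to read off (i)--(iii) from Theorem~\ref{thm:main1} by squeezing $E$ between its inscribed and circumscribed discs, and to treat the genuinely new case (iv) through the Trotter--Kato scheme of Section~\ref{sec:TK} together with the a priori estimates of Section~\ref{sec:rough-est}; concretely these are Propositions~\ref{prop:cor-square} (cases (i)--(iii)) and~\ref{prop:middle} (case (iv)).

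Everything rests on two soft facts I would record first: the maximal solution of {\rm(C)} is monotone in the source set (if $E_1\subset E_2$, then the corresponding maximal solutions satisfy $u_1\le u_2$), and one checks that $\ol u(x,t):=ct$ is a supersolution of {\rm(C)} --- at points where $D\ol u=0$ the geometric operator degenerates and $\ol u_t=c\ge c\mathbf 1_E$ --- so $0\le u\le ct$ on $\R^2\times[0,\infty)$. Writing $E=\{|x_i|\le d\}$ we have $\ol B(0,d)\subset E\subset\ol B(0,\sqrt2\,d)$. If $d<1/\sqrt2$, then $\sqrt2\,d<1$ and $u$ is dominated by the maximal solution of {\rm(C)} for $E=\ol B(0,\sqrt2\,d)$, which is bounded by Theorem~\ref{thm:main1}(i); this gives (i). If $d>1$, then $u$ dominates the maximal solution of {\rm(C)} for $E=\ol B(0,d)$, which is supercritical and whose quotient tends to $c$ locally uniformly by Theorem~\ref{thm:main1}(ii); with $u\le ct$ this gives (ii). If $d=1$, then $\ol B(0,1)\subset E$, so monotonicity and Theorem~\ref{thm:main1}(iii) give $u\ge ct\mathbf 1_{\ol B(0,1)}$, which together with $u\le ct$ forces $u=ct$ on $\ol B(0,1)$, i.e.\ (iii). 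Note that the two thresholds $1/\sqrt2$ and $1$ in the statement are exactly the values of $d$ at which the circumscribed, respectively inscribed, disc of $E$ becomes critical for the radial problem.

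Case (iv), where $1/\sqrt2<d<1$, is the heart of the theorem. The geometric motion of $\pl E$ is genuinely non-monotone and has no explicit solution --- the four flat sides advance with unit speed, while the corners are instantly rounded and then recede --- so no self-similar barrier works, and one must pass through the Trotter--Kato product formula of Section~\ref{sec:TK}: the approximate profile at the $k$th step is a staircase whose top plateau is refilled on $E$ and then flowed for one time step, and the rough estimates of Section~\ref{sec:rough-est} convert information about how this plateau grows into bounds on $u(x,t)/t$. For the upper bound $\limsup_{t\to\infty}u(x,t)/t\le\beta<c$ (locally uniformly), the mechanism is that a plateau rising at the full rate $c$ would have to be supported on a set whose level-set curvature radii all stay $\ge1$; by a rolling-ball argument such a set contains a disc of radius $1$ and therefore cannot be fitted inside $E$, whose inradius is $d<1$, so the plateau must shed a definite amount of height at its corners in each step, and packaging this loss produces a supersolution of {\rm(C)} that grows strictly slower than $ct$. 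For the lower bound $\liminf_{t\to\infty}u(x,t)/t\ge\al>0$, the hypothesis enters as $\sqrt2\,d>1$: the corners of $E$ protrude beyond the critical radius, which allows a plateau to be sustained over a fixed compact set $K\subset E$ with nonempty interior (in contrast with case (i), where the plateau is eroded away entirely); tracking this through the Trotter--Kato scheme yields $u\ge\al t-C$ on $K$ for some $\al>0$, and the geometric spreading then propagates it to the claimed locally uniform lower bound. The strict inequalities built into the two constructions give $0<\al<\beta<c$.

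I expect the main obstacle to be exactly (iv): since the evolution of $\pl E$ expands in some directions and contracts in others and cannot be written down, the delicate point is the quantitative bookkeeping --- controlling how much the corners of the current plateau erode in one Trotter--Kato step and showing that this cannot exhaust a fixed inner region once $\sqrt2\,d>1$, while dually showing that a definite loss must occur whenever $d<1$. Carrying this out within the framework of Sections~\ref{sec:TK} and~\ref{sec:rough-est} is where the real work lies.
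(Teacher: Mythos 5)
Your treatment of (i)--(iii) matches the paper: squeeze $E$ between the inscribed disc $\ol B(0,d)$ and the circumscribed disc $\ol B(0,\sqrt2\,d)$, invoke Theorem~\ref{thm:main1} and the comparison principle, and for (iii) combine $\ol B(0,1)\subset E$ with $u\le ct$. This is exactly how Proposition~\ref{prop:cor-square} is obtained.

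For (iv) your high-level plan (feed into Section~\ref{sec:rough-est}) and the two thresholds are right, but the mechanism you propose is not the paper's, and two of your assertions are in fact false. First, the paper does \emph{not} run the Trotter--Kato scheme rigorously: Section~\ref{sec:TK} says explicitly that the Barles--Souganidis framework does not apply here because the discontinuous source destroys the comparison principle, so the product formula is used only as a heuristic and the rigorous argument goes entirely through the set-theoretic obstacle problems. Second, ``no self-similar barrier works'' is backwards --- Lemma~\ref{lem:G1} \emph{is} a self-similar supersolution $w(x,t)=\lam(t)W(x/\lam(t))$ with $\lam'=1/\lam-1$, built for the graph obstacle problem near a corner of $E$ in rotated coordinates (obstacle $g(x)=-|x|$ on $(-\sqrt2 d,\sqrt2 d)$). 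What the paper actually does in (iv) is verify (G1) and (G2): that $\cF^-[\ol D]$ becomes empty in finite time for some open $D\supset E$ (via the shrinking self-similar circle-arc barrier, which needs $d<1$), and that $\cF^+[E](t)\to\R^2$ (via the translating circle-arc subsolution $\phi(x,t)=-2D+\sqrt{r^2-x^2}+(1-1/r)t$ of radius $r\in(1,\sqrt2 d)$, which needs $\sqrt2 d>1$); then Theorems~\ref{thm:upper-esti} and~\ref{thm:lower-esti} finish. Your rolling-ball step for the upper bound --- that a persisting plateau must have level-set curvature radii $\ge1$ and hence contain a unit disc --- is unsupported: $A_{\max}(t)$ is only a set-theoretic \emph{subsolution} of $V=\kappa+1$ constrained inside $E$, which gives no lower bound on its curvature radii, and the Blaschke-type rolling you invoke presupposes a convexity you have not established. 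Likewise ``a plateau sustained over a fixed $K\subset E$'' is not what the lower bound needs; one must show $\cF^+[E]$ eventually swallows a disc of radius strictly greater than $1$. The two explicit barrier constructions of Section~\ref{sec:square} are precisely the content your sketch is missing.
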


Parts (i)--(iii) of Theorem \ref{thm:main2} are obtained straightforwardly by using Theorem \ref{thm:main1} and the comparison principle.  
To prove part (iv) of Theorem \ref{thm:main2}, a first important step is to understand the behavior of the level 
sets of the top and bottom of solutions to (C). 
We study this by using a set theoretic approach (see \cite[Chapter 5]{G} for instance) 
in Section \ref{sec:rough-est}. 
This perspective gives us  rough estimates (Theorems \ref{thm:upper-esti}, \ref{thm:lower-esti}) 
on the behavior of the height of solutions to (C), 
but this is not enough to obtain the precise behavior of $\lim_{t \to \infty} u(x,t)/t$. 
Indeed, we have not yet been able to get the precise behavior $\lim_{t \to \infty} u(x,t)/t$ in part (iv) of Theorem \ref{thm:main2},
which is the case where $E$ is of middle size.

\medskip
We conclude this Introduction to give some related works studying large-time asymptotic behavior of solutions of problems which are non-coercive or of second order problem. The list is not exhaustive at all. As an example of non-coercive Hamilton-Jacobi equations the instability of flatness in crystal growth is discussed in \cite{YGR, GLM1, GLM2}, and 
the turbulent flame speed is studied in the context of G-equations in \cite{XY1, XY2}. 
In \cite{CN} the large-time behavior of solutions of mean curvature flow equations with driving force is studied.  

\medskip\noindent
\textbf{Acknowledgments.}
 The first two authors are grateful to Professor Etsuro Yokoyama and Professor Hiroki Hibino for their kind comments on the two-dimensional nucleation.


\section{Wellposedness}\label{sec:well}
In this section, we consider a little bit more general equation 
\begin{equation}
\label{eq:2}
\begin{cases}
\displaystyle
u_t-\left(\Div\left(\frac{Du}{|Du|}\right)+1\right)|Du|=f(x)
 \quad &\text{in} \ \R^n\times(0,\infty), \\
u(\cdot,0)=u_0 \quad &\text{on} \ \R^n,
\end{cases}
\end{equation}
where $f:\R^n\to[0,\infty)$ is a bounded function, and $u_0:\R^n\to[0,\infty)$ is a continuous with 
\begin{equation}\label{comp-supp}
\supp u_0, \ \supp f\subset B(0,R) \quad\text{for some} \ R>0. 
\end{equation}
This is an important assumption as,
for any $T>0$, we only deal with compactly supported solutions of \eqref{eq:2} and (C)
on $\R^n \times [0,T]$.
Notice that 
\[
\Div\left(\frac{Du}{|Du|}\right)|Du|=\tr\left[\left(I-\frac{Du\otimes Du}{|Du|^2}\right)D^2u\right], 
\]
where $I$ is the identity matrix of size $n$.
Set $\sig(p):=I-(p\otimes p)/|p|^2$, and 
\begin{equation}\label{func:H}
H(p,X):=-\tr\left[\sig(p)X\right]-|p| \quad
\text{for} \ (p,X)\in(\R^n\setminus\{0\})\times\bS^{n}, 
\end{equation}
where $\bS^n$ is the set of real symmetric matrices of size $n$.

We first recall the definition of viscosity solutions to 
an equation with discontinuous functions, which 
was introduced in \cite{I}.  
\begin{defn}[Viscosity solutions]
{\rm
Let $u:\R^n\times[0,\infty) \to \R$ be a locally bounded.
We say that $u$ is a viscosity subsolution of \eqref{eq:2} if  $u^\ast(\cdot,0)\le u_0$ on $\R^n$, and 
\[
\tau+H_\ast(p,X)\le  f^{\ast}(x_0) \quad\text{for all} \ (x_0,t_0)\in\R^n\times(0,\infty),  (p,X,\tau)\in{J}^{+}u^\ast(x_0,t_0). 
\]
We say that $u$ is a viscosity supersolution of  \eqref{eq:2} if 
$u_\ast(\cdot,0)\ge u_0$ on $\R^n$,  and
\[
\tau+H^\ast(p,X)\ge f_{\ast}(x_0) \quad\text{for all} \ (x_0,t_0)\in\R^n\times(0,\infty), \ (p,X,\tau)\in {J}^{-}u_\ast(x_0,t_0). 
\]
Here 
for a locally bounded function $h$ on $\R^m$ for $m\in\N$, 
we denote the upper semicontinuous envelope (resp., lower semicontinuous envelope) 
by $h^\ast$ and $h_\ast$ defined as  
$h^\ast(x):=\lim_{\del\to0}\sup\{h(y)\,:\, |x-y|\le\del\}$ and 
$h_\ast(x):=\lim_{\del\to0}\inf\{h(y)\,:\, |x-y|\le\del\}$, respectively,    
and we write $J^{+}u^{\ast}(x,t)$ and $J^{-}u_{\ast}(x,t)$ for the super and sub semijets of $u^{\ast}, u_{\ast}$ at $(x,t)\in\R^n\times(0,\infty)$, respectively. 

We say that $u$ is a viscosity solution of \eqref{eq:2} 
if it is both a viscosity subsolution and a viscosity supersolution of \eqref{eq:2}. 
}
\end{defn}

It is well-known that 
if the function $f$ on the right hand side of \eqref{eq:2} is continuous on $\R^n$, 
then the comparison principle and the uniqueness of solutions hold. 
See \cite{G} for instance. 
On the other hand, if we deal with discontinuous functions $f$ on the right hand side of \eqref{eq:2}, we lose the uniqueness of viscosity solutions in general. 
See \cite{GH} for some examples and observations of first order Hamilton-Jacobi equations with discontinuous source terms.  
We only have the comparison principle in a weak sense. 
The following result is standard in the theory of viscosity solutions, 
but we present it here to make the paper self-contained.

\begin{prop}[Weak Comparison Principle]\label{prop:comparison}
Fix $T>0$. Assume that $v\in \USC(\R^n\times[0,T])$ and $w\in \LSC(\R^n\times[0,T])$,  
which are compactly supported, i.e., 
\begin{equation}\label{compact-support}
v(x,t)=w(x,t)=0\quad\text{for all} \ x\in\R^n\setminus B(0,R_T), t\in[0,T] \ \text{and} \  
\text{some} \ R_T>0,  
\end{equation}
are a viscosity subsolution and a viscosity supersolution of 
\eqref{eq:2} with $f, g$ on the right hand side respectively, 
where $f$ and $g$ are locally bounded functions satisfying 
$f^{\ast} \leq g_{\ast}$ on $\R^n$. 
Then $v\le w$ on $\R^n\times[0,T]$. 
\end{prop}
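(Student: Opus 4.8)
The plan is a doubling-of-variables argument, with the classical quadratic penalization replaced by a quartic one to cope with the singularity of $H$ at $p=0$, and a limiting argument with semicontinuous envelopes to cope with the discontinuities of $f$ and $g$. Suppose, for contradiction, that $M_0:=\sup_{\R^n\times[0,T]}(v-w)>0$. Since $v$ is upper semicontinuous and bounded above, $w$ is lower semicontinuous and bounded below, and both vanish outside $B(0,R_T)$, the relevant suprema below are attained. Adding the term $-\eta/(T-t)$ keeps the maximum in $\{t<T\}$, so it suffices to reach a contradiction assuming $M_\eta:=\sup_{\R^n\times[0,T)}\big(v(x,t)-w(x,t)-\eta/(T-t)\big)>0$ for some small $\eta>0$; the non-positivity of $M_\eta$ for every small $\eta$ then yields $v\le w$ on $\R^n\times[0,T)$, and a standard limiting argument gives the inequality up to $t=T$.

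For $\ep>0$ consider
\[
\Phi_\ep(x,y,t):=v(x,t)-w(y,t)-\frac{|x-y|^4}{4\ep^4}-\frac{\eta}{T-t}
\qquad\text{on }\R^n\times\R^n\times[0,T).
\]
Using the compact support (hence boundedness) of $v,w$ and upper semicontinuity, $\Phi_\ep$ attains a maximum at some $(x_\ep,y_\ep,t_\ep)$, and the term $\eta/(T-t)$ forces $t_\ep\le T-c(\eta)<T$. The usual penalization estimate, comparing $\Phi_\ep(x_\ep,y_\ep,t_\ep)$ with values of $\Phi_\ep$ on the diagonal $\{x=y\}$, gives $|x_\ep-y_\ep|^4/\ep^4\to 0$; passing to a subsequence, $x_\ep,y_\ep\to\bar x$ and $t_\ep\to\bar t$. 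If $t_\ep=0$ along a subsequence, the common initial datum $u_0$ and its continuity give $\Phi_\ep(x_\ep,y_\ep,0)\le u_0(x_\ep)-u_0(y_\ep)-\eta/T\to -\eta/T<0$, contradicting $\Phi_\ep\ge M_\eta>0$; hence $t_\ep\in(0,T)$ for all small $\ep$.

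At such an interior maximum, the parabolic theorem on sums yields $a_\ep,b_\ep\in\R$ and $X_\ep,Y_\ep\in\bS^n$ such that, with the semijets closed (harmless for the tests since $v$ is u.s.c.\ and $w$ is l.s.c.), $(a_\ep,p_\ep,X_\ep)\in J^{+}v(x_\ep,t_\ep)$, $(b_\ep,p_\ep,Y_\ep)\in J^{-}w(y_\ep,t_\ep)$, $a_\ep-b_\ep=\eta/(T-t_\ep)^2\ge\eta/T^2>0$, $p_\ep=|x_\ep-y_\ep|^2(x_\ep-y_\ep)/\ep^4$, and the matrix inequality governed by $D^2\big(|x-y|^4/(4\ep^4)\big)$, which in particular gives $X_\ep\le Y_\ep$. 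If $x_\ep=y_\ep$, then $p_\ep=0$ and $D^2\big(|x-y|^4/(4\ep^4)\big)$ vanishes at $(x_\ep,y_\ep)$, so the matrix inequality gives $X_\ep\le 0\le Y_\ep$; since $H(p,X)=-\tr X+(X\hat p)\cdot\hat p-|p|$ with $\hat p:=p/|p|$, one checks directly that $H_\ast(0,X)\ge 0$ when $X\le 0$ and $H^\ast(0,Y)\le 0$ when $Y\ge 0$, so the sub- and supersolution inequalities give $a_\ep\le f^\ast(x_\ep)$ and $b_\ep\ge g_\ast(x_\ep)$, whence $\eta/T^2\le a_\ep-b_\ep\le f^\ast(x_\ep)-g_\ast(x_\ep)\le 0$, a contradiction. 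If $x_\ep\ne y_\ep$, then $p_\ep\ne 0$ and $H_\ast=H^\ast=H$ there; subtracting the two inequalities and using $-\tr[\sigma(p_\ep)(X_\ep-Y_\ep)]\ge 0$ (because $\sigma(p_\ep)\ge 0$ and $X_\ep\le Y_\ep$) together with the cancellation of the two $|p_\ep|$-terms, we obtain $\eta/T^2\le a_\ep-b_\ep\le f^\ast(x_\ep)-g_\ast(y_\ep)$; letting $\ep\to 0$ and using the upper semicontinuity of $f^\ast$, the lower semicontinuity of $g_\ast$, and $x_\ep,y_\ep\to\bar x$, the right-hand side is $\le f^\ast(\bar x)-g_\ast(\bar x)\le 0$, again a contradiction. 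Hence $M_0\le 0$, i.e.\ $v\le w$.

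The main obstacle is the singularity of $H$ at $p=0$: with the classical quadratic penalization the difference $H_\ast(0,X_\ep)-H^\ast(0,Y_\ep)$ cannot be bounded below, whereas the quartic penalization makes both the gradient and the Hessian of the doubling term vanish at a diagonal maximum, reducing that case to the elementary sign properties of $H_\ast(0,\cdot)$ on negative semidefinite matrices and of $H^\ast(0,\cdot)$ on positive semidefinite ones; away from $p=0$ the operator is genuinely (degenerate) elliptic and the estimate $-\tr[\sigma(p_\ep)(X_\ep-Y_\ep)]\ge0$ together with the exact cancellation of the first-order term makes the argument routine. The discontinuity of $f$ and $g$ causes no real difficulty once $|x_\ep-y_\ep|\to 0$, since we only need to pass to the limit in $f^\ast(x_\ep)-g_\ast(y_\ep)$ using the semicontinuous envelopes and the hypothesis $f^\ast\le g_\ast$.
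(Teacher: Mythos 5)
Your proposal is correct and follows essentially the same route as the paper's own proof: the same quartic penalization $|x-y|^4/(4\ep^4)$ to force the Hessian of the doubling term to vanish when $p_\ep=0$, the same application of Ishii's lemma, the same case split $p_\ep=0$ versus $p_\ep\ne 0$ with the sign observations $H_\ast(0,X)\ge 0$ for $X\le 0$ and $H^\ast(0,Y)\le 0$ for $Y\ge 0$, and the same passage to the limit via semicontinuity of $f^\ast$ and $g_\ast$. The only cosmetic differences are the normalization $\ep^4$ versus $\ep$ in the penalization and your explicit treatment of the possibility $t_\ep=0$, which the paper glosses over.
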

\begin{proof}
%
We argue by contradiction and suppose that $\max_{\R^n\times[0,T]}(v-w)>0$. 
Then, there exists a small constant $\al>0$ such that 
for each $\ep>0$ sufficiently small, we have
\[
\max_{\substack{x,y\in\R^n\\t\in[0,T]}}
\left\{v(x,t)-w(y,t)-
\frac{|x-y|^4}{4\ep}-\frac{\al}{T-t}\right\}>0. 
\]
As $v(x,t)=w(x,t)=0$ for all $(x,t)\in(\R^n\setminus B(0,R_T))\times[0,T]$, 
the maximum is attained at 
$(x_\ep,y_\ep,t_\ep)\in B(0,R_T)^2\times(0,T)$ and by passing to a subsequence if necessary, we can assume 
$(x_\ep,y_\ep,t_\ep) \to (x_0,x_0,t_0)$ as $\ep \to 0$ for some $x_0\in\ol{B}(0,R_T), t_0\in[0,T]$.

In view of Ishii's lemma, for any $\rho>0$, 
there exist 
$(a_\ep,p_\ep,X_\ep)\in {J}^{2,+}v(x_\ep,t_\ep)$ and 
$(b_\ep,p_\ep,Y_\ep)\in {J}^{2,-}w(y_\ep,t_\ep)$ 
such that 
\begin{gather}\label{ishii-lem}
a_\ep-b_\ep=\frac{\al}{(T-t_\ep)^2}, \quad 
p_\ep=\frac{|x_\ep-y_\ep|^2(x_\ep-y_\ep)}{\ep}, \quad 
\left(
\begin{array}{cc}
X_\ep & 0 \\
0 & -Y_\ep 
\end{array}
\right)
\le 
A+\rho A^{2}, 
\end{gather}
where 
\begin{align*}
A:=&
\frac{1}{\ep}|x_\ep-y_\ep|^2 
\left(
\begin{array}{cc}
I & -I \\
-I & I 
\end{array}
\right)
+\frac{2}{\ep}
\left(
\begin{array}{cc}
(x_\ep-y_\ep)\otimes(x_\ep-y_\ep) & 
-(x_\ep-y_\ep)\otimes(x_\ep-y_\ep) \\
-(x_\ep-y_\ep)\otimes(x_\ep-y_\ep) & 
(x_\ep-y_\ep)\otimes(x_\ep-y_\ep)
\end{array}
\right).  
\end{align*}

The definition of viscosity solutions implies the following inequalities: 
\begin{equation}\label{ineq-1}
a_\ep+H_\ast(p_\ep,X_\ep)\le f^{\ast}(x_\ep), 
\quad\text{and}\quad
b_\ep+H^\ast(p_\ep,Y_\ep)\ge g_{\ast}(y_\ep).
\end{equation}
Note that \eqref{ishii-lem} implies $X_\ep\le Y_\ep$. 

In the case $p_\ep\not=0$, i.e., $x_\ep\not=y_\ep$, we have 
\[
H_\ast(p_\ep,X_\ep)-H^\ast(p_\ep,Y_\ep)=\tr\left[\sig(p_\ep)(Y_\ep-X_\ep)\right]\ge0. 
\]
In the case $p_\ep=0$, we have $x_\ep=y_\ep$. 
Due to \eqref{ishii-lem},  we have $A=0$, which implies $X_\ep\le0$ and $Y_\ep\ge0$. 
Thus, 
\[
H_{\ast}(p_\ep,X_\ep)\ge 
H_{\ast}(0,0)=0, 
\ \textrm{and} \ 
H^{\ast}(p_\ep,Y_\ep)\le 
H^{\ast}(0,0)=0. 
\]
In both cases, $H_\ast(p_\ep,X_\ep)-H^\ast(p_\ep,Y_\ep)\ge0$. 
Combine this with \eqref{ineq-1} to yield
\[
\frac{\al}{T^2}\le\frac{\al}{(T-t_\ep)^2}\le f^{\ast}(x_\ep)-g_{\ast}(y_\ep).
\]
Let $\ep \to 0$ to deduce that
\[
\limsup_{\ep\to0} (f^\ast(x_\ep)-g_\ast(y_\ep))\le 
\limsup_{\ep\to0}f^\ast(x_\ep)-\liminf_{\ep\to0}g_\ast(y_\ep)
\le (f^\ast-g_\ast)(x_0)\leq 0,
\] 
which is a contradiction. 
\end{proof}

In the case where we drop the curvature term, i.e., we consider 
the Hamilton--Jacobi equation with a discontinuous source term, 
if we additionally assume 
\begin{equation}\label{assume:f}
(f_\ast)^{\ast}=f^\ast \quad\text{on} \ \R^n, 
\end{equation}
then we can prove the uniqueness of viscosity solutions in the class of upper semicontinuous functions in the sense of \cite{I}. 
This can be done by using a control approach which is an analogue of \cite{BP}. 
On the other hand, as far as the authors know, there is no uniqueness results for \eqref{eq:2} under \eqref{assume:f}. 
Therefore, in this paper, 
we consider the maximal viscosity solutions of \eqref{eq:2}. 
To make it clear, we give its definition here. 
\begin{defn}[Maximal viscosity solutions]
{\rm
We say that $u$ is a {\it maximal} viscosity solution of \eqref{eq:2} if $u$ is a viscosity solution of \eqref{eq:2} satisfying 
\eqref{compact-support} and 
for every viscosity solution $v$ of \eqref{eq:2} satisfying \eqref{compact-support},
$ u \geq v$ on $\R^n\times[0,\infty)$.
}
\end{defn}

\begin{thm}[Existence and Uniqueness]\label{thm:existence}
There exists a unique maximal viscosity solution $u$ of \eqref{eq:2}.  
\end{thm}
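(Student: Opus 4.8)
The plan is to combine Perron's method with the weak comparison principle of Proposition~\ref{prop:comparison}: Perron's method produces a viscosity solution of \eqref{eq:2}, and a carefully chosen pair of barriers will, through Proposition~\ref{prop:comparison}, force that solution to dominate every other admissible one. The first step is to build two \emph{continuous} barriers for \eqref{eq:2} --- a subsolution $\underline\Psi$ and a supersolution $\overline\Psi$ --- each satisfying the compact-support condition \eqref{compact-support} on $[0,T]$ for every $T>0$, with $\underline\Psi(\cdot,0)=\overline\Psi(\cdot,0)=u_0$.

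For $\overline\Psi$, fix a continuous, compactly supported $g\colon\R^n\to[0,\infty)$ with $g\ge\|f\|_\infty+1$ on $\supp f$. Since $g$ is continuous, $g_\ast=g$, and because $f^\ast\le\|f\|_\infty$ on $\supp f$ while $f^\ast=0$ off $\supp f$, we have $f^\ast\le g_\ast$ on $\R^n$. Equation \eqref{eq:2} with the \emph{continuous} right-hand side $g$ and the continuous datum $u_0$ is classically well posed --- the comparison principle and uniqueness hold, see \cite{G} --- so it admits a unique continuous solution, which we take as $\overline\Psi$. That $\overline\Psi$ satisfies \eqref{compact-support} on each $[0,T]$ is a finite-speed-of-propagation statement: comparing the level sets of $\overline\Psi$ with a large enclosing sphere, whose radius obeys $\dot\rho=1+(n-1)/\rho$ under the geometric law $V=\kappa+1$ and hence stays bounded on $[0,T]$, shows that $\overline\Psi(\cdot,t)$ vanishes outside a fixed ball for $t\in[0,T]$. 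For $\underline\Psi$, take the unique continuous solution of \eqref{eq:2} with the right-hand side $f$ replaced by $0$; since $0\le f$, this $\underline\Psi$ is in particular a viscosity subsolution of \eqref{eq:2}, it satisfies \eqref{compact-support} on each $[0,T]$ by the same argument, and $\underline\Psi(\cdot,0)=u_0$.

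Now define
\[
u(x,t):=\sup\bigl\{\,v(x,t)\ :\ v\ \text{is a viscosity subsolution of \eqref{eq:2} satisfying \eqref{compact-support}, with}\ v\ge\underline\Psi\,\bigr\}.
\]
The family is nonempty since $\underline\Psi$ lies in it, and, by Proposition~\ref{prop:comparison} applied to any such $v$ (a subsolution with $f$) and $\overline\Psi$ (a supersolution with $g$, using $f^\ast\le g_\ast$ and the common compact support), every member satisfies $v\le\overline\Psi$. Hence $\underline\Psi\le u\le\overline\Psi$, so $u$ obeys \eqref{compact-support}, $u^\ast(\cdot,0)\le\overline\Psi(\cdot,0)=u_0$, and $u_\ast(\cdot,0)\ge\underline\Psi(\cdot,0)=u_0$. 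By the standard Perron argument for discontinuous viscosity solutions, $u^\ast$ is a viscosity subsolution of \eqref{eq:2} (the supremum-of-subsolutions lemma, using that $f^\ast$ is upper semicontinuous and $H_\ast$ lower semicontinuous) and $u_\ast$ is a viscosity supersolution (the usual bump construction: if it failed at some interior point, a small bump would give a strictly larger admissible subsolution, contradicting the definition of $u$; the degeneracy of $H$ at $Du=0$ is accommodated by the relaxed operators $H_\ast,H^\ast$ as in \cite{I}). Thus $u$ is a viscosity solution of \eqref{eq:2} satisfying \eqref{compact-support}.

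Finally, if $v$ is any viscosity solution of \eqref{eq:2} satisfying \eqref{compact-support}, then $v$ is a subsolution of \eqref{eq:2}, and Proposition~\ref{prop:comparison} applied to $\underline\Psi$ (a subsolution with right-hand side $0$) and $v$ (a supersolution with $f$, using $0\le f$) yields $\underline\Psi\le v$; hence $v$ belongs to the family defining $u$ and $u\ge v$. Therefore $u$ is a maximal viscosity solution of \eqref{eq:2}, and two maximal solutions necessarily coincide, which gives uniqueness. I expect the only genuinely non-routine point to be the construction of the barriers with \emph{spatial compact support for all finite times} --- that is, finite speed of propagation for \eqref{eq:2} in spite of the driving term $+|Du|$ --- where the geometric structure ($V=\kappa+1$ for the level sets, with a large sphere growing at rate $\dot\rho=1+(n-1)/\rho$) is essential. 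The remainder is the standard Perron--comparison package; the points needing care are the discontinuity of $f$ (always testing subsolutions against $f^\ast$ and supersolutions against $f_\ast$, and choosing the comparison right-hand sides $g$ and $0$ so that the hypotheses $f^\ast\le g_\ast$ and $0\le f_\ast$ of Proposition~\ref{prop:comparison} hold) and the singularity of the mean-curvature operator at $Du=0$ (handled by $H_\ast,H^\ast$).
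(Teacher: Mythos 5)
Your argument is correct, but it follows a genuinely different route from the paper's. The paper sidesteps Perron's method entirely: it replaces $f^{\ast}$ by the decreasing sequence of Lipschitz regularizations $f^k(x):=\sup_{y}\bigl(f^{\ast}(y)-k|x-y|\bigr)$, so that $f^k\in C(\R^n)$ and $f^k\downarrow f^{\ast}$ pointwise; it solves \eqref{eq:2} with the \emph{continuous} data $f^k$ to obtain a decreasing family $u^k\downarrow u$; and it observes that because $u=\inf_k u^k=\limsup{}^{\ast}_{k\to\infty}u^k$, the limit is simultaneously a supersolution (inf-stability) and a subsolution (stability of the upper half-relaxed limit). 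Maximality is then a one-line consequence of the weak comparison principle: any viscosity solution $v$ of \eqref{eq:2} has $v^{\ast}\leq u^k$ for every $k$, so $v\leq u$. What the paper's route buys is that one never needs the Perron ``bump'' step establishing that $u_{\ast}$ is a supersolution --- which in this setting is the one genuinely delicate part of your argument, since $H$ is singular at $p=0$ and $f$ is discontinuous; you correctly lean on the relaxed operators $H_{\ast},H^{\ast}$ and on \cite{I}, and the argument does close because $f_{\ast}$ is lower semicontinuous and $f^{\ast}\geq f_{\ast}$, but the paper avoids invoking it altogether. What your route buys is a clean intrinsic characterization of the maximal solution as the supremum of all compactly supported subsolutions dominating the barrier $\underline\Psi$, whereas the paper produces it only as a decreasing limit of regularized problems. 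Both arguments take as given the well-posedness of \eqref{eq:2} with continuous right-hand side (from \cite{G}) and the finite-speed-of-propagation property needed to keep solutions compactly supported on $[0,T]$ --- the paper does not prove that property either --- and both use essentially the same barrier device (the paper's comparison with a continuous compactly supported subsolution for $f\equiv 0$, your $\underline\Psi$) to secure the initial condition.
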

\begin{proof}
Fix $T>0$.
For $k\in\N$ and $x\in \R^n$, define
\[
f^k(x):=\sup_{y\in \R^n} \left(f^{\ast}(y)-k|x-y|\right). 
\]
 It is straightforward that $f^k\in C(\R^n)$ and $f^k(x)\downarrow  f^\ast(x)$ pointwise as $k\to\infty$.  

By the standard theory of viscosity solutions, there exists a unique viscosity solution $u^{k} \in C(\R^n \times [0,T])$ 
of \eqref{eq:2} with the right hand side $f^k$, and by the comparison principle, we can easily prove $u^{k}(x)\downarrow u(x)$ for all $x\in\R^n$ as $k\to \infty$.
Furthermore,  in light of assumption \eqref{comp-supp},  there exists $R_T>0$ such that
\[
u^k(x,t)=u(x,t)=0 \quad \text{for all}\ (x,t) \in (\R^n \setminus B(0,R_T)) \times [0,T], \ \text{and} \ k \in \N.
\] 
Note that in view of this monotonicity 
\begin{equation}\label{uk-decreasing}
u=\inf_{k\in\N}u^k=\limsup{}^{\ast}_{k\to\infty}u^k,  
\end{equation}
where $\limsup{}^{\ast}$ is the upper half-relaxed limit. Thus, $u \in \USC(\R^n \times [0,T])$.

Clearly, $u^k$ is a supersolution of \eqref{eq:2} for each $k\in \N$,
which yields immediately that $u$ is also a supersolution of \eqref{eq:2} in view of the $\inf$-stability.
Thanks to \eqref{uk-decreasing},  $u$ is a subsolution of \eqref{eq:2} in view of the 
stability property of the upper half-relaxed limit for viscosity subsolutions.
Therefore, $u$ is a solution of \eqref{eq:2}. 
Moreover, $u$ is compactly supported on $\R^n \times [0,T]$.
 
Next, we prove that $u$ is continuous at $t=0$. 
By \cite[Lemma 4.3.4]{G}, 
there exists a viscosity subsolution $v \in C(\R^n \times [0,T])$ with a compact support of \eqref{eq:2} with $f=0$ on the right hand side.  
Thus, by the comparison principle, $v(y,t)-u_0(x)\le u(y,t)-u_0(x)\le u^k(y,t)-u_0(x)$ for all $x,y\in\R^n$, $t\in [0,T]$, 
which implies $u(y,t)\to u_0(x)$ as $(y,t)\to(x,0)$. 

Finally, we show that $u$ is the unique maximal viscosity solution of \eqref{eq:2}. 
Indeed, take any $v$ to be a viscosity solution of \eqref{eq:2}. By the comparison principle, $v^\ast \leq u^k$. 
Let $k \to \infty$ to deduce the desired result.
\end{proof}


\section{Heuristic observation}\label{sec:TK}
In this section, we give a formal argument in order to understand 
the behavior of solutions of (C). 
Our goal in this section is to explain intuitively with a geometric aspect
how the asymptotic average of solutions depends on the shape of $E$. 
This is basically the same as the derivation of the problem from physics explained in Introduction.

\subsection{Trotter-Kato product formula}
We consider the following double-step method:
\begin{numcases}
{{\rm (N)}\quad}
v_t=c\mathbf{1}_{E}
 & in $\R^n\times(0,\infty)$, \nonumber \\
v(\cdot,0)=u_0 & in $\R^n$, \nonumber 
\end{numcases}
and 
\begin{numcases}
{{\rm (P)}\quad}
w_t=\left(\Div\Big(\frac{Dw}{|Dw|}\Big)+1\right)|Dw| 
 & in $\R^n\times(0,\infty)$, \nonumber \\
w(\cdot,0)=u_0 & in $\R^n$. \nonumber 
\end{numcases}
We call (N) and (P) the \textit{nucleation problem} and the \textit{propagation problem}, respectively. 
We define the operators $S_1(t):\Li(\R^n)\to\Li(\R^n)$, and $S_2(t):\Lip(\R^n)\to\Lip(\R^n)$, respectively by  
\begin{equation}\label{TK-1}
S_1(t)[u_0]:=u_0+c\mathbf{1}_{E}t, \quad\text{and}\quad
S_2(t)[u_0]:=w(\cdot,t),   
\end{equation}
where $w$ is the unique viscosity solution of (P). 

For $x\in\R^n, \tau>0, i\in\N$, set 
\begin{equation}\label{TK-formula}
U^\tau(x,i\tau):=S_1(\tau)\big(S_2(\tau)S_1(\tau)\big)^{i}[u_0].  
\end{equation}
The function $U^\tau(x, i\tau)$ is called the \textit{Trotter-Kato product formula}, 
we can expect for $x\in \R^n$ and $t=i \tau>0$ fixed, 
\begin{equation}\label{TK:limit}
\lim_{i \to \infty} U^\tau(x,i\tau)= u(x,t) \quad\text{locally uniformly for} \ x\in\R^n, 
\end{equation}
under some condition, where $u$ is the ``solution" of (C). 
In the framework of the theory of viscosity solutions, Barles and Souganidis in \cite{BS} 
first proved \eqref{TK:limit}.  
Our situation here actually does not fit into the framework of \cite{BS},  
as we do not have the comparison principle for (C) because of the discontinuous 
source term $c\mathbf{1}_E$ on the right hand side of (C). 
Nevertheless, it is quite reasonable to assume that \eqref{TK:limit} holds 
in order to guess the behavior of the solution $u$ to (C).

In light of this, the behavior of $u(x,t)/t$ as $t\to \infty$
can be consider as the behavior of 
\begin{equation}\label{limit-TK}
\lim_{t \to \infty} \left( \lim_{\substack{\tau \to 0\\ i\tau =t}} \frac{U^\tau(x,i\tau)}{i \tau}\right). 
\end{equation}
The advantage of considering $U^\tau(x,i \tau)$ lies in the fact that its graph is 
a pyramid of finite number of steps of height $c\tau$.
The double-step method can then be described in a geometrical way as follows: 
\begin{itemize}
\item[\textbf{(N)}]
At each nucleation step, we drop from above an amount of $c\tau \mathbf{1}_E$ crystal
down to the pyramid with the assumption that the crystals are not sticky; 
\item[\textbf{(P)}]
At each propagation step, each layer of the pyramid evolves under a forced mean curvature flow ($V=\kappa+1$).
\end{itemize}

Let us emphasize that, in general, the growth of the pyramid is highly nonlinear. 
The reason comes from the fact that the behavior of each layer is extremely complicated, which will be pointed out 
in Section \ref{sec:rough-est} in more clearly. 
One particular layer can receive some amount of crystal in each nucleation step, then changes its shape 
in each propagation step.
Of course the layers change not only in a nonlinear way
but also in a nonhomogeneous way  in each propagation step.
Furthermore, the changes are not monotone (unlike the case $V=1$).
These affect the next nucleation step seriously as the receipt of crystals at each 
layer will change dramatically from time to time.
More or less, this says that the problem has  double nonlinear effects.

\subsection{Notations and Spherical symmetric case}\label{sec:obs1}
In this subsection, we make the analysis above clearer by a careful step by step analysis. 
In order to do so, we introduce notations below. 
For $A\subset\R^n$ and  $t>0$, let $\cF[A](t)$  be the solution to the surface evolution equation 
\[
{\rm(S)}\qquad
V=\kap+1 \ \text{on} \ \Gam(t) \quad \text{with} \quad  \Gam(0)=A.
\]
Fix $i\in\N$ and $\tau>0$. For $j\in\{1,\dots,i\}$ and $k\in\{1,\dots,j\}$, 
we define the sets $E_{\tau}(j,k)\subset\R^n$,
which are the layers of the pyramids, as follows: 
\begin{align*}
E_{\tau}(1,1):=&E,  \\
E_{\tau}(2,1):=&E\cup\cF[E_{\tau}(1,1)](\tau), \
E_{\tau}(2,2):=E\cap\cF[E_{\tau}(1,1)](\tau), \\ 
E_{\tau}(3,1):=&E\cup\cF[E_{\tau}(2,1)](\tau), \ 
E_{\tau}(3,2):=(E\cap\cF[E_\tau(2,1)](\tau))\cup\cF[E_{\tau}(2,2)](\tau), \\
E_{\tau}(3,3):=&E\cap\cF[E_{\tau}(2,2)](\tau),  \\
\vdots&
\end{align*}

Let us now use this system to investigate spherical symmetric cases, i.e., 
\[
E=\ol{B}(0,R_0). 
\]
It is worth pointing out that the spherical symmetric cases are easy to understand
 because of the fact that balls remain balls after the evolution under the forced mean curvature flow
$V=\kappa+1$. So the changes in shapes in each propagation step are homogeneous, which make 
behaviors of the nucleation steps and the pyramids extremely clear.
Our concern therefore is only whether $\ol{B}(0,R_0)$ grows or shrinks under the propagation step or not.
This leads to the distinction between the three cases: 
\begin{equation}\label{3case}
R_0<n-1, \quad
R_0>n-1,\quad \text{and} \quad
R_0=n-1.
\end{equation}

We first consider the case where $R_0<n-1$. 
As the curvature term is stronger than the force term, the surfaces 
start to shrink.  Thus, 
\begin{align*}
E_{\tau}(1,1)=&\ol{B}(0,R_0),  \\
E_{\tau}(2,1)=&\ol{B}(0,R_0), \
E_{\tau}(2,2)=\ol{B}(0,R(\tau)), \\ 
E_{\tau}(3,1)=&\ol{B}(0,R_0), \ 
E_{\tau}(3,2)=\ol{B}(0,R(\tau)), \
E_{\tau}(3,3)=\ol{B}(0,R(2\tau)),  \\
\vdots&
\end{align*}
where $R(t)$ is the solution of the ODE
\begin{numcases}
{}
\dot{R}(t)=-\frac{n-1}{R(t)}+1 
 & for $t>0$, \nonumber \\
R(0)=R_0. &  \nonumber 
\end{numcases}

On the other hand, if $R_0>n-1$, then the curvature term is weaker than the force term and 
the surfaces start to expand. Thus, 
\begin{align*}
E_{\tau}(1,1)=&\ol{B}(0,R_0),  \\
E_{\tau}(2,1)=&\ol{B}(0,R(\tau)), \
E_{\tau}(2,2)=\ol{B}(0,R_0), \\ 
E_{\tau}(3,1)=&\ol{B}(0,R(2\tau)), \ 
E_{\tau}(3,2)=\ol{B}(0,R(\tau)), \
E_{\tau}(3,3)=\ol{B}(0,R_0).  \\
\vdots&
\end{align*}
By using these observations, we can somehow understand the behavior of 
\eqref{limit-TK} in each cases of \eqref{3case}.

In the next section, we only consider the spherical case and 
rigorously derive the formula of $u$ as well as its large time average.

\section{Spherical symmetric case} \label{sec:sphere}
In this section, we study the case where 
\[
E=\ol{B}(0,R_0)
\quad\text{for some} \ R_0>0,   
\]
and investigate the large time average of the maximal viscosity solution $u$. 

It is reasonable to look for radially symmetric solution $u$ of (C) of the form
\[
u(x,t)=\phi(|x|,t)=\phi(r,t). 
\]
Then, 
\begin{align*}
&u_t=\phi_t, \  
Du=\phi_r \frac{x}{|x|}, \\
&
D^2u= 
\phi_{rr}\frac{x\otimes x}{|x|^2}
+\phi_r\frac{1}{|x|}\Big(I-\frac{x\otimes x}{|x|^2}\Big). 
\end{align*}
Plugging these into (C) to reduce it to the following initial value problem, 
which is basically a singular noncoercive Hamilton--Jacobi equation in $1-$dimension,  
\begin{equation*}
{{\rm(C')}\quad}
\begin{cases}
\displaystyle
\phi_t-\frac{(n-1)\phi_r}{r}-|\phi_r|=c\mathbf{1}_{[0,R_0]}(r) 
 \quad &\text{in} \ (0,\infty)\times(0,\infty), \\
\phi(\cdot,0)=0 \quad &\text{on} \ [0,\infty).
\end{cases}
\end{equation*}
From the next subsequences, we consider three cases divided in \eqref{3case}. 

\subsection{The case $R_0 < n-1$}\label{subsec:subcritical} 
In order to obtain the maximal viscosity solution, we approximate from above to 
get a decreasing sequence of supersolutions. 
Its limit will be the maximal viscosity solution once we prove that it is a subsolution. 

Fix $\ep>0$ sufficiently small such that $R_0+\ep< n-1$. 
We first solve a boundary value problem of a linear ordinal differential equation: 
\begin{equation*}
{{\rm(ODE)}\quad}
\begin{cases}
\displaystyle
\left(-\frac{(n-1)}{r}+1\right)\psi^{\ep}_r=cI^{\ep}(r) \quad \text{in} \ (0,R_0+\ep), \\
\psi^{\ep}(R_0+\ep)=0, 
\end{cases}
\end{equation*}
where 
\[
I^{\ep}(r):=
 \begin{cases}
 1 \quad &\text{for} \ r\in[0,R_0],\\
 \frac{(R_0+\ep)-r}{\ep} \quad &\text{for}  \ r\in[R_0, R_0+\ep],\\
 0 \quad &\text{for} \ r \in [R_0+\ep,+\infty).
 \end{cases}
\]
It is clear that, for $0\leq r \leq R_0+\ep$,
\[
\psi^{\ep}(r):=\int_0^r\frac{cI^{\ep}(s)}{-\frac{n-1}{s}+1}\,ds
-\int_0^{R_0+\ep} \frac{cI^{\ep}(s)}{-\frac{n-1}{s}+1}\,ds.
\] 
We set $\psi^{\ep}(r):=0$ for $r\ge R_0+\ep$, 
and extend $\psi^\ep$ to the whole $\R$ in a symmetric way 
(i.e., set $\psi^\ep(r)=\psi^\ep(-r)$ for all $r\leq 0$). 
Denote $u^\ep\in C(\R^n\times[0,\infty))$ by 
\begin{equation}\label{func:u-case1-ap}
 u^\ep(x,t):=\min\left\{\psi^\ep(|x|),ct\right\}. 
\end{equation}
\begin{center}
\includegraphics[width=80mm]{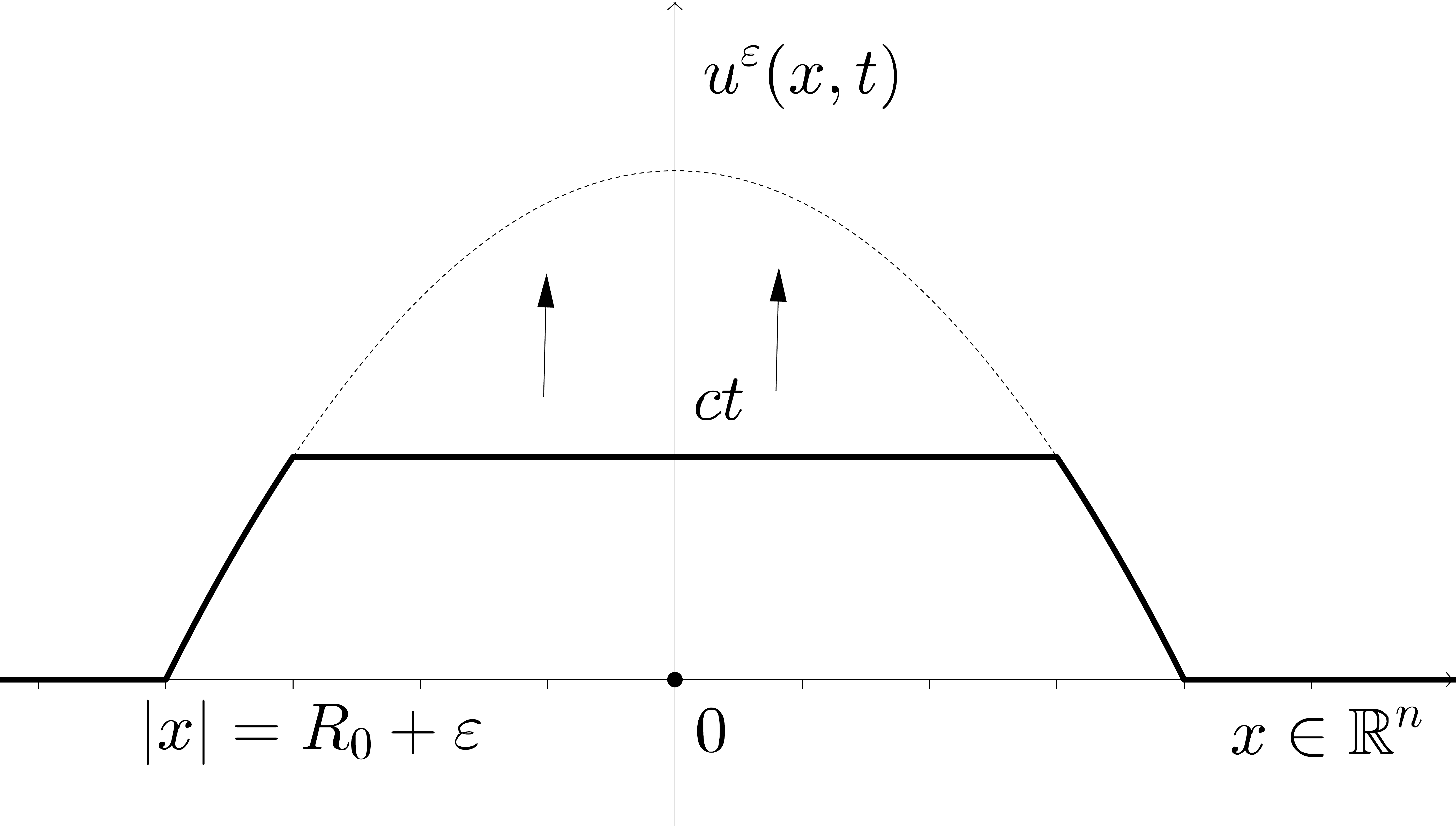}
\captionof{figure}{Picture of $u^\ep$ in Case 1}
\end{center}

\begin{lem}\label{lem:psi-ep}
The function $u^\ep$ is a viscosity supersolution of \eqref{eq:2}
for $g(x)= cI^\ep(|x|)$.
\end{lem}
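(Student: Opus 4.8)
The plan is to verify the supersolution property of $u^\ep = \min\{\psi^\ep(|x|), ct\}$ directly from the definition, by checking the inequality $\tau + H^\ast(p,X) \ge (cI^\ep(|x_0|))_\ast$ at every point $(x_0,t_0)$ and every $(p,X,\tau) \in J^- u^\ep_\ast(x_0,t_0)$. Since $u^\ep$ is continuous, $u^\ep_\ast = u^\ep$, and since $cI^\ep(|x|)$ is continuous, its lower semicontinuous envelope equals itself. The key structural observation is that $u^\ep$ is a minimum of two functions, so at any point where a subjet exists, it is a subjet of at least one of the two candidate functions; I would split into the three regimes $ct_0 < \psi^\ep(|x_0|)$, $ct_0 > \psi^\ep(|x_0|)$, and $ct_0 = \psi^\ep(|x_0|)$.

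First I would handle the region where $u^\ep(x_0,t_0) = ct_0 < \psi^\ep(|x_0|)$: here near $(x_0,t_0)$ the function coincides with $ct$, a smooth function of $t$ alone, so the only subjet has $p=0$, $X=0$ (more precisely $X \le 0$ as $p$ comes from the $x$-part, which is identically zero), and $\tau = c$. Then $\tau + H^\ast(0,0) = c + 0 = c \ge cI^\ep(|x_0|)$ since $I^\ep \le 1$, which gives the inequality. Second, in the region where $u^\ep(x_0,t_0) = \psi^\ep(|x_0|) < ct_0$, near $(x_0,t_0)$ the function coincides with $\psi^\ep(|x|)$, which is time-independent, so $\tau = 0$; the spatial inequality then reduces, via the radial reduction already computed in the derivation of (C$'$), to checking that $\psi^\ep$ is a supersolution of the ODE $\bigl(-\tfrac{n-1}{r}+1\bigr)\psi^\ep_r \le cI^\ep(r)$ wherever a subjet exists. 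Since $\psi^\ep$ is $C^1$ on $(0, R_0+\ep)$ and solves this ODE with equality there, and is constant ($=0$) for $r \ge R_0+\ep$ where $I^\ep = 0$, the only delicate points are $r=0$ and $r = R_0+\ep$; at $r = R_0 + \ep$ one checks the concave kink (a downward corner) admits no subjet, or that any subjet slope satisfies the inequality because the coefficient $-\tfrac{n-1}{r}+1 < 0$ there and $\psi^\ep_r \le 0$ on the left, $=0$ on the right; at $r=0$ one uses radial symmetry and the fact that $\psi^\ep$ has a smooth local minimum-type structure so that subjets force $p$ small and $X$ bounded, and the singular term is controlled. Third, at points where $ct_0 = \psi^\ep(|x_0|)$, any subjet of the min is a subjet of whichever piece one likes, so the inequality follows from either of the previous two cases.

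The main obstacle I anticipate is the careful treatment of the concave corner of $\psi^\ep$ at $r = R_0 + \ep$ (and the matching with the $ct$ graph along $u^\ep = \min\{\cdot,\cdot\}$), since that is exactly where the two mechanisms compete; one must confirm that the corner opens \emph{downward} so that the subjet set is either empty or forces a gradient for which $\bigl(-\tfrac{n-1}{r}+1\bigr)\psi^\ep_r \le cI^\ep(r)$ still holds, using crucially that $R_0 + \ep < n-1$ so the coefficient is strictly negative. A secondary technical point is the behavior at the spatial origin $r=0$, where the curvature operator is singular; here I would note that $\psi^\ep$ is even and Lipschitz near $0$ with $\psi^\ep_r(0^+) \le 0$, so that a subjet at $x_0 = 0$ must have $p = 0$ and $X \le \psi^\ep_r(0^+)/|x| \cdot(\text{something bounded})$ in the appropriate radial sense, and the inequality $\tau + H^\ast(0,X) = -\tr[\sigma(0)X] \ge (\text{nonpositive}) \ge cI^\ep(0)$... — this last comparison actually needs care and is the place where one invokes $\psi^\ep_r(0^+)=0$ (which holds because $I^\ep(0)=1$ forces $\psi^\ep_r(r) \sim cr/(n-1)$... no: $(-\tfrac{n-1}{r}+1)\psi^\ep_r = c$ gives $\psi^\ep_r \sim -\tfrac{cr}{n-1} \to 0$ as $r\to 0^+$), so $\psi^\ep$ is $C^1$ at $0$ with vanishing derivative, and the curvature term at an interior smooth point is handled by the usual radial formula. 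Everything else is routine bookkeeping with semijets.
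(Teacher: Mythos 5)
Your proposal is correct and follows essentially the same route as the paper: verify the supersolution property directly, exploit the structure of $u^\ep=\min\{\psi^\ep(|x|),ct\}$ to reduce to checking each piece (with the radial/appendix-lemma argument at the delicate spatial locus), and close with the constant-in-time piece giving $\tau=c\ge cI^\ep$. One small inaccuracy in your geometric picture: there is in fact no kink of $\psi^\ep$ at $r=R_0+\ep$, because $I^\ep(R_0+\ep)=0$ forces $\psi^\ep_r(R_0+\ep^-)=0=\psi^\ep_r(R_0+\ep^+)$, so $\psi^\ep$ is $C^1$ there (only $\psi^\ep_{rr}$ jumps); this is harmless, since a subjet then has vanishing gradient and, as you already note, $D^2\phi\le 0$, which gives $H^\ast(0,D^2\phi)\ge 0=cI^\ep(R_0+\ep)$ directly.
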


\begin{proof}
 The claim is clear for $(x,t) \in B(0,R_0+\ep)\times (0,\infty)$
as $u^\ep$ is the minimum of two supersolutions there.
Also there is nothing to check in case $|x|>R_0+\ep$ as $u^\ep=g=0$ there.

We only need to check carefully the case where $|x|=R_0+\ep$. 
It is worth to mention first that for any $t_0>0$, 
we always have that $u^\ep(x,t)=\psi^\ep(|x|)$ for $x$ in a neighborhood of $\partial B(0,R_0+\ep)$
and $t \in (t_0/2,t_0+1)$. 
In other words, $u^\ep$ does not change with respect to time in this neighborhood.
Take $\phi \in C^2(\R^n)$ to be a test function such that $\psi^\ep - \phi$ has a strict minimum
at $x_0 \in \partial B(0,R_0+\ep)$.
In light of Lemma \ref{lem:jet-radial} in Appendix, for some $s\le0$, 
\begin{align*}
D\phi(x_0)=s\frac{x_0}{R_0+\ep}, \quad \text{and}\quad
\tr[\sig(D\phi(x_0))D^2\phi(x_0)]
\leq \frac{(n-1)s}{R_0+\ep}. 
\end{align*}
Thus, for $s>0$, 
\begin{align*}
&H(D\phi(x_0),D^2\phi(x_0)) -cI^\ep(R_0+\ep)
=-\tr[\sig(D\phi(x_0))D^2\phi(x_0)]-|D\phi(x_0)|\\
\geq&\, 
\frac{\big((n-1)-(R_0+\ep)\big)|s|}{R_0+\ep}\ge 0,
\end{align*}
which implies the conclusion. 
\end{proof}
We define
\[
 \psi(r):=\lim_{\ep \to 0} \psi^\ep(r)=\inf_{\ep \to 0} \psi^\ep(r) \quad
 \text{for} \ r\in\R. 
\]
 Actually, $\psi$ can be computed explicitly as following
\[
 \psi(r)=
\begin{cases}
 c\left( (r+(n-1)\log|r-n+1|) - (R_0+(n-1)\log|R_0-n+1|)\right) \quad &\text{for} \ r \in [0,R_0],\\
0 \quad &\text{for} \ r \in (R_0,\infty).
\end{cases}
\]
For $(x,t)\in \R^n \times [0,\infty)$, set
\begin{equation}\label{func:u-case1}
v(x,t):= \min\left\{\psi(|x|),ct\right\}=\lim_{\ep \to 0} u^\ep(x,t)=\inf_{\ep>0}u^{\ep}(x,t).   
\end{equation}
It is important noticing that $u^\ep$ converges to $v$ uniformly in $\R^n \times [0,\infty)$.

By the comparison principle and Lemma \ref{lem:psi-ep}, 
it is clear that $u^\ep \geq u$ and hence $v \geq u$.
Furthermore, $u^\ep$ is also a supersolution of (C) for all $\ep>0$, and so is $v$.
We now show that in fact $u=v$. In order to achieve this, we need the following result
\begin{lem}\label{lem:v-case1}
 The function $v$ is a subsolution of {\rm (C)}.
\end{lem}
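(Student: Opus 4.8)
The plan is to verify directly from the definition of viscosity subsolution that $v$ satisfies the subsolution inequality for the Hamiltonian $H$ against the source $c\mathbf{1}_E = c\mathbf{1}_{\ol{B}(0,R_0)}$, using the explicit formula $v(x,t) = \min\{\psi(|x|), ct\}$. Since $v$ is a minimum of two functions, at any point $(x_0,t_0)$ where we test from above with a smooth $\phi$ touching $v$ there, either $v(x_0,t_0) = ct_0 < \psi(|x_0|)$, or $v(x_0,t_0) = \psi(|x_0|) \le ct_0$ (or both are equal). The first case is easy: near $(x_0,t_0)$ we have $v(x,t) = ct$, so the only subjet element has $\tau = c$, $p = 0$, $X = 0$ (the function is spatially constant there), and the required inequality reads $c + H_*(0,0) = c + 0 = c \le c\mathbf{1}_E(x_0)$; this forces $x_0 \in E$, which is indeed true because $ct_0 < \psi(|x_0|)$ can only happen where $\psi(|x_0|) > 0$, i.e. where $|x_0| < R_0$, so $x_0 \in \inter E \subset E$ and $\mathbf{1}_E(x_0) = 1$. (One must be slightly careful at $|x_0| = R_0$ where $\psi = 0$, but there $ct_0 < 0$ is impossible, so this case does not arise on $\partial B(0,R_0)$.)

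The second case, $v(x_0,t_0) = \psi(|x_0|)$, is where the real work lies, and it splits according to whether $|x_0| < R_0$, $|x_0| > R_0$, or $|x_0| = R_0$. When $|x_0| > R_0$ we have $\psi \equiv 0$ in a neighborhood, so again $p = 0$, $X = 0$, and since $\psi(|x_0|) = 0 \le ct_0$ the time-derivative part of the subjet satisfies $\tau \le 0$; then $\tau + H_*(0,0) = \tau \le 0 = c\mathbf{1}_E(x_0)$, which holds. When $|x_0| < R_0$, near $x_0$ the function $v$ in the $x$-variable is $\psi(|x|)$, which is smooth there (away from the origin and from $|r| = R_0$), so $v$ is locally $C^2$ in $x$ and independent of $t$ on a time-neighborhood as long as $ct_0 > \psi(|x_0|)$ strictly; then $\tau \le 0$ and the spatial derivatives of $\phi$ agree with those of $\psi(|x|)$, and the defining ODE for $\psi$ — namely $(-(n-1)/r + 1)\psi_r = c$ for $r \in [0,R_0]$ — together with the radial identities $Du = \psi_r x/|x|$ and $\tr[\sigma(Du)D^2u] = (n-1)\psi_r/|x|$ gives exactly $\tau + H(Du, D^2u) = \tau - (n-1)\psi_r/r - |\psi_r| \le -(n-1)\psi_r/r + \psi_r = c = c\mathbf{1}_E(x_0)$, using that $\psi_r < 0$ on $(0,R_0)$ (since $R_0 < n-1$ makes $-(n-1)/r + 1 < 0$, so $\psi_r = c/(-(n-1)/r+1) < 0$, hence $|\psi_r| = -\psi_r$). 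The borderline case $ct_0 = \psi(|x_0|)$ with $|x_0| < R_0$ needs a short separate argument: here the subjet in the time variable is unconstrained below but we may use $\tau = c$ as the extreme admissible value and the computation above shows $c + H(\cdots) = c$ which still equals $c\mathbf{1}_E(x_0) = c$. Finally the origin $x_0 = 0$: by radial symmetry and the explicit form, $\psi$ has a local minimum-type behavior with $D\psi(0) = 0$, and using Lemma \ref{lem:jet-radial} (the radial-jet lemma cited in Appendix) one controls the admissible $(p, X)$ at the origin and checks the inequality there too.

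The genuinely delicate point — the main obstacle — is the case $|x_0| = R_0$, the boundary of the nucleation region, where $\psi$ fails to be $C^1$: it has a downward kink (outside $\psi \equiv 0$, inside $\psi_r \to -\infty$ as $r \uparrow R_0$ since the coefficient $-(n-1)/r + 1$ stays bounded away from $0$ and nonzero, actually $\psi_r(R_0^-) = c/(-(n-1)/R_0 + 1)$ which is a finite negative number — so in fact the kink is a corner with finite left slope $< 0$ and right slope $0$). At such a concave corner, a smooth test function touching from above at $x_0$ must have $D\phi(x_0) = s\, x_0/R_0$ for some $s$ between the left and right radial slopes, i.e. $\psi_r(R_0^-) \le s \le 0$, and its curvature is controlled via Lemma \ref{lem:jet-radial} by $\tr[\sigma(D\phi(x_0))D^2\phi(x_0)] \le (n-1)s/R_0$ — this is precisely the inequality used in the proof of Lemma \ref{lem:psi-ep}. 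Then, since $R_0 < n-1$, one has $\tau + H(D\phi, D^2\phi) = \tau - \tr[\sigma D^2\phi] - |s| \le \tau - (n-1)s/R_0 + s = \tau + s((R_0 - (n-1))/R_0)$; as $s \le 0$ and $R_0 - (n-1) < 0$ this product is $\le 0$, and with $\tau \le 0$ (which holds since $v = \psi$ there means the $ct$-branch is not active, so $ct_0 \ge \psi(R_0) = 0$ with the function time-independent in a neighborhood when $t_0 > 0$) we get $\tau + H(\cdots) \le 0 = c\mathbf{1}_E(x_0)$ (noting $x_0 \in \partial B(0,R_0)$, and $E = \ol{B}(0,R_0)$ is closed so $\mathbf{1}_E(x_0) = 1$, which only makes the inequality easier). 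I would organize the write-up by first disposing of the regions $|x_0| \ne R_0$ and the two branches quickly, then treat $|x_0| = R_0$ carefully by invoking Lemma \ref{lem:jet-radial} exactly as in Lemma \ref{lem:psi-ep}, and conclude that $v$ is a subsolution; combined with the already-established facts that $v \ge u$ and $v$ is a supersolution of (C), the maximality of $u$ then forces $u = v$.
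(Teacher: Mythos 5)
Your decomposition into the routine branches (strictly $v=ct$, strictly $v=\psi$, the two cases $|x_0|<R_0$ and $|x_0|>R_0$) is fine, and matches what the paper leaves implicit. The genuine difficulty, however, is not where you locate it. The case $|x_0|=R_0$ is in fact \emph{vacuous}: there $\psi(R_0)=0<ct_0$, so $v\equiv\psi(|x|)$ near $(x_0,t_0)$, and $\psi$ has a \emph{convex} radial corner at $R_0$ (left slope $\psi'(R_0^-)<0$, right slope $0$, so $a<b$); a smooth function cannot touch such a corner from above, and Lemma~\ref{lem:jet-radial}(ii) (which would give the needed lower bound $\tr[\sigma D^2\phi]\ge(n-1)s/R_0$) does not even apply since it requires $a>b$. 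You invoke instead the part (i) inequality $\tr[\sigma D^2\phi]\le(n-1)s/R_0$, which is the wrong direction for a subsolution test; the estimate you then write down does not actually close.

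The real content of the lemma is the transition set $\{v(x_0,t_0)=\psi(|x_0|)=ct_0,\ |x_0|<R_0\}$, and here your argument has a gap. You propose to ``use $\tau=c$ as the extreme admissible value'' and reuse the earlier computation, but the superjet of $v$ at a transition point is a set that must be checked in full, and the time slope $\tau\in[0,c]$ and radial space slope $s\in[\psi'(|x_0|),0]$ are \emph{not} independent. For instance $(\tau,s)=(c/2,\psi'(|x_0|))$ would give $\tau+H\le\tau+s(|x_0|-n+1)/|x_0|=c/2+c>c$, violating the inequality — so one must show such pairs are excluded from the superjet. The paper does exactly this via the characteristic argument: along the curve $x(t)=r(t)x_0/|x_0|$ with $\psi(r(t))=ct$, one has $v(x(t),t)=ct$, whence $\frac{d}{dt}\phi(x(t),t)\big|_{t=t_0}\le c$, i.e.\ $\phi_t(x_0,t_0)+s\,r'(t_0)\le c$ with $r'(t_0)=(|x_0|-n+1)/|x_0|$. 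This coupling is precisely the missing inequality that, combined with the radial jet bound, closes the subsolution estimate; without it the proof does not go through.
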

\begin{proof}
Set $T_0:=(n-1) \log(n-1) - (R_0+(n-1)\log|R_0-n+1|)$.
For $t\geq T_0$, $v(x,t)=\psi(|x|)$ for all $x\in \R^n$ and there is nothing to check.

Let us now fix $(x_0,t_0)\in \R^n \times (0,T_0)$ such that $v(x_0,t_0)=\psi(x_0)=c t_0$.
Assume that $v-\phi$ has a strict maximum at $(x_0,t_0)$ 
for some test function $\phi \in C^2(\R^n \times (0,\infty))$ and that $\phi(x_0,t_0)=v(x_0,t_0)=c t_0$. 
Clearly, $0 \leq \phi_t(x_0,t_0) \leq c$.
Thanks to Lemma \ref{lem:jet-radial}, for some $s \in[\psi'(|x_0|),0]$, 
\begin{align*}
D\phi(x_0,t_0)=s\frac{x_0}{|x_0|}, \quad \text{and}\quad
\tr[\sig(D\phi(x_0))D^2\phi(x_0)] \geq 
\frac{(n-1)s}{|x_0|}.
\end{align*}
Thus, for $s>0$, 
\begin{equation}\label{subsln-1}
\phi_t(x_0,t_0)-H(D\phi(x_0),D^2\phi(x_0)) -c\mathbf{1}_E(x_0)\\
\leq  \phi_t(x_0,t_0) + \frac{s(|x_0|-n+1)}{|x_0|} -c.
\end{equation}

For $t<t_0$, let $r(t)$ be the function in $(|x_0|,R_0)$ which satisfies 
\[
 t=r(t)+(n-1)\log(n-1-r(t)) - (R_0 + (n-1)\log(n-1-R_0)), 
\]
and set $x(t):=r(t)x_0/|x_0|$. 
Then we have $\phi(x(t),t) \geq v(x(t),t)=ct$ for $t \leq t_0$ and $\phi(x(t_0),t_0)=\phi(x_0,t_0)=ct_0$.
Therefore,
\begin{align*}
 c &\geq \frac{d}{dt}(\phi(x(t),t))|_{t=t_0}=\phi_t(x_0,t_0) + D\phi(x_0,t_0)\cdot x'(t_0)\\
&=\phi_t(x_0,t_0) + r'(t_0) \left(D\phi(x_0,t_0)\cdot \frac{x_0}{|x_0|}\right)=\phi_t(x_0,t_0)+s\frac{|x_0|-n+1}{|x_0|}.
\end{align*}
We combine this and \eqref{subsln-1} to get the result.
\end{proof}

In conclusion, we obtain 
\begin{prop}\label{prop:case1}
Let $u$ be the maximal solution of {\rm(C)}. 
Then, we have the formula \eqref{func:u-case1}, and
thus $u$ is bounded on $\R^n\times[0,\infty)$. In particular, 
\[
 \lim_{t \to \infty} \frac{u(x,t)}{t}=0 \quad \text{in} \ C(\R^n). 
\]
\end{prop}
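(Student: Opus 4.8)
The plan is to establish the identity $u=v$, with $v$ as in \eqref{func:u-case1}, by sandwiching $v\le u\le v$ on $\R^n\times[0,\infty)$, and then to read off boundedness and the statement about $u(x,t)/t$ directly from the explicit formula for $v$. The two lemmas already proved do the analytic heavy lifting; what remains is to check that their outputs fit the hypotheses of the Weak Comparison Principle (Proposition~\ref{prop:comparison}) and of the definition of the maximal solution.

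First I would prove $u\le v$. Since $c\mathbf{1}_E(x)=c\mathbf{1}_{[0,R_0]}(|x|)\le cI^\ep(|x|)$ for every $x\in\R^n$ and $\ep>0$, we have $(c\mathbf{1}_E)^\ast\le cI^\ep(|\cdot|)=(cI^\ep(|\cdot|))_\ast$, so Proposition~\ref{prop:comparison}, applied on each slab $\R^n\times[0,T]$ to the compactly supported subsolution $u$ of {\rm (C)} and the compactly supported supersolution $u^\ep$ of \eqref{eq:2} with right-hand side $cI^\ep(|\cdot|)$ from Lemma~\ref{lem:psi-ep}, gives $u\le u^\ep$. Letting $\ep\to0$ and invoking \eqref{func:u-case1} yields $u\le v$. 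Next I would prove $u\ge v$. By $\inf$-stability of supersolutions, $v=\inf_{\ep>0}u^\ep$ is a viscosity supersolution of {\rm (C)}; together with Lemma~\ref{lem:v-case1}, which asserts that $v$ is a subsolution of {\rm (C)}, this makes $v$ a viscosity solution of {\rm (C)}. Moreover $v$ is compactly supported in $x$, uniformly in $t$, because $\psi$ vanishes off $[0,R_0]$ and $ct\ge0$, so $v=0$ for $|x|>R_0$. Hence the defining maximality property of $u$ forces $u\ge v$, and combining the two inequalities gives $u=v$, i.e.\ $u$ has the formula \eqref{func:u-case1}.

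Finally, since $R_0<n-1$ the factor $|r-n+1|=n-1-r$ is bounded below by $n-1-R_0>0$ on $[0,R_0]$, so $\psi$ is continuous with compact support and $M:=\|\psi\|_{L^\infty(\R)}<\infty$. Then $0\le u(x,t)=v(x,t)\le M$ for all $(x,t)\in\R^n\times[0,\infty)$, which gives boundedness, and $0\le u(x,t)/t\le M/t\to0$ as $t\to\infty$, uniformly in $x$; this is precisely $\lim_{t\to\infty}u(\cdot,t)/t=0$ in $C(\R^n)$. I do not expect a genuine obstacle at this stage: all the subtlety — constructing the radial barrier $u^\ep$ so that the curvature term helps in the regime $R_0<n-1$, and verifying via the radial-jet lemma that the limit $v$ remains a subsolution across the kink set $\{\psi(|x|)=ct\}$ — has been isolated into Lemmas~\ref{lem:psi-ep} and~\ref{lem:v-case1}, so the present argument only has to assemble them, the one point requiring attention being the verification of the compact-support and ordering hypotheses needed to invoke Proposition~\ref{prop:comparison} and the maximality of $u$.
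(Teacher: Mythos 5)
Your proposal is correct and follows essentially the same route as the paper: the paper establishes $u\le v$ via the weak comparison principle and Lemma \ref{lem:psi-ep} in the text preceding the proposition, obtains $u\ge v$ from the maximality of $u$ once Lemma \ref{lem:v-case1} shows $v$ is a (compactly supported) solution, and then reads the boundedness and the limit off the explicit formula — exactly as you do, with the intermediate verifications (the inequality $(c\mathbf 1_E)^\ast\le (cI^\ep)_\ast$, compact support of $u$ and $u^\ep$ on each slab, inf-stability, the lower bound $n-1-R_0>0$ keeping $\psi$ bounded) spelled out more fully in your version.
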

\begin{proof}
Since the maximal solution $u$ is obtained by \eqref{func:u-case1}, 
we have 
\[
u(x,t)=\min\{\psi(|x|), ct\} \leq \psi(|x|) \quad \text{for all} \ (x,t) \in \R^n \times [0,\infty).
\]
This immediately implies the conclusion. 
\end{proof}

\subsection{The case $R_0>n-1$}\label{subsec:supercritical}
Fix $\ep>0$.
We first look at an initial-boundary value problem of a linear partial differential equation 
in $1$-dimension: 
\[
{{\rm(L)}_\ep\quad}
\begin{cases}
\displaystyle
\varphi^\ep_t+\left(-\frac{(n-1)}{r}+1\right)\varphi^\ep_r=0 \quad &\text{in}\ (R_0+\ep,\infty)\times (0,\infty) \\
\varphi^\ep(R_0+\ep,t)=ct \quad &\text{on} \ [0,\infty).
\end{cases}
\]
By using the method of characteristics, we can find a solution to the above PDE 
\[
\varphi^\ep(r,t)=c\big(t-r-(n-1)\log(r-(n-1))+R_0+\ep+(n-1)\log(R_0+\ep-(n-1))\big).
\]
Define $u^\ep:\R^n \times[0,\infty) \to \R$ as 
\begin{equation*}
u^\ep(x,t):=
\left\{
\begin{array}{ll}
ct & \text{for all} \ (x,t)\in B(0,R_0+\ep) \times [0,\infty) \\
\left(\varphi^\ep(|x|,t)\right)_{+} & \text{for all} \ (x,t)\in (\R^n \setminus B(0,R_0+\ep)) \times [0,\infty).
\end{array}
\right. 
\end{equation*}
\begin{center}
\includegraphics[width=80mm]{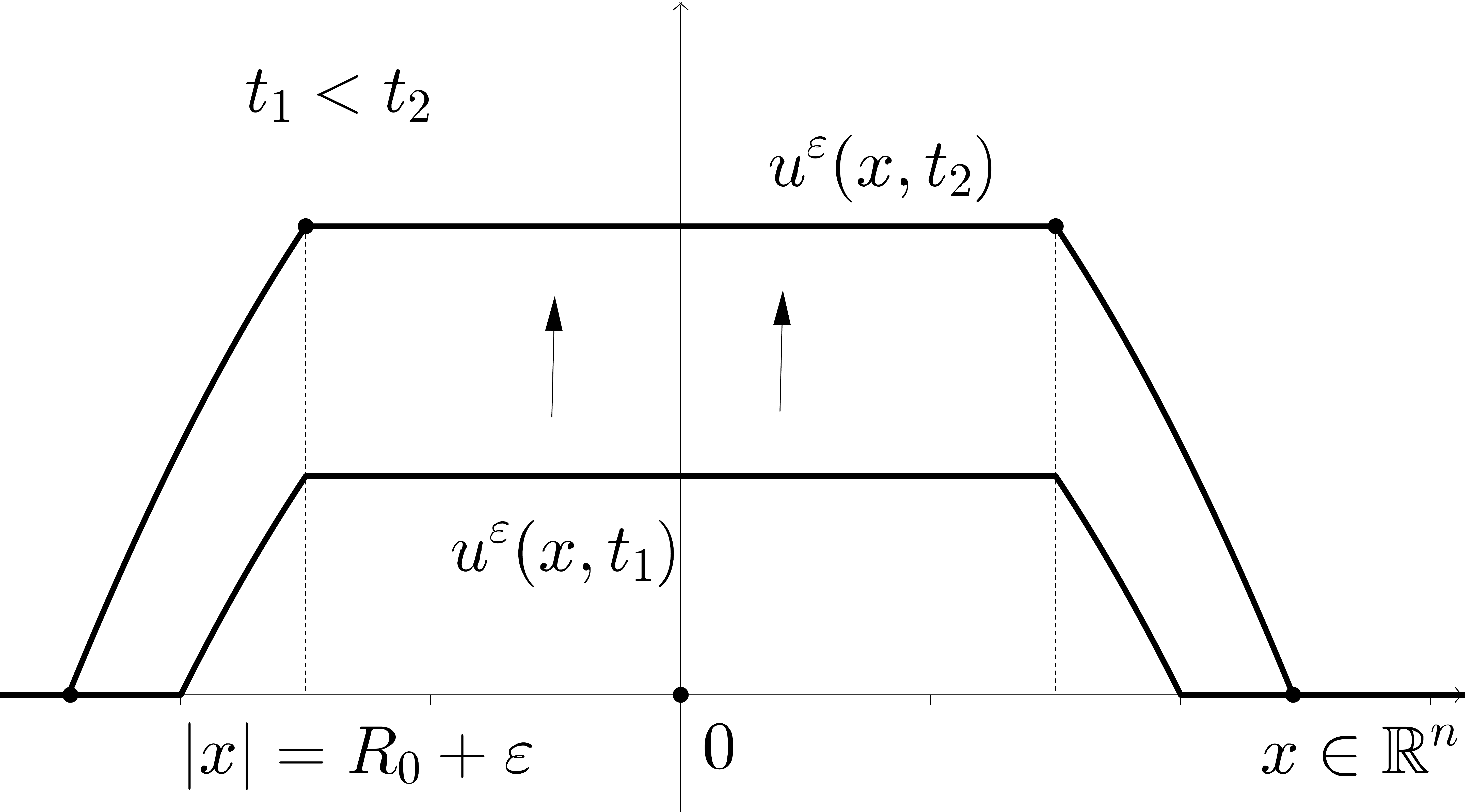}
\captionof{figure}{Picture of $u^\ep$ in Case 2}
\end{center}

\begin{lem}\label{lem:varphi-ep}
 The function $u^\ep$ is a viscosity supersolution of \eqref{eq:2} for $g(x)=c\mathbf{1}_{B(0,R_0+\ep)}$.
\end{lem}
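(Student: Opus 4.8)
The plan is to verify the supersolution inequality of the definition directly, region by region, in the spirit of the proof of Lemma~\ref{lem:psi-ep}. First note that $u^\ep$ is continuous on $\R^n\times[0,\infty)$ (the two pieces agree on $\{|x|=R_0+\ep\}$ since $\varphi^\ep(R_0+\ep,t)=ct$), so $u^\ep_\ast=u^\ep$, and that $u^\ep(\cdot,0)\equiv 0$ because $\varphi^\ep(r,0)\le 0$ for $r\ge R_0+\ep$; hence the initial condition holds and it remains to check the viscosity inequality on $\R^n\times(0,\infty)$. I would partition the latter into: (a) the cylinder $\{|x|<R_0+\ep\}$, where $u^\ep=ct$; (b) the open set $\{|x|>R_0+\ep,\ \varphi^\ep(|x|,t)>0\}$, where $u^\ep=\varphi^\ep(|x|,t)$ is smooth and radial; (c) the open set $\{|x|>R_0+\ep,\ \varphi^\ep(|x|,t)<0\}$, where $u^\ep\equiv0$; (d) the cylinder $\{|x|=R_0+\ep\}$; and (e) the ``free boundary'' $\{|x|>R_0+\ep,\ \varphi^\ep(|x|,t)=0\}$. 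Throughout recall $g_\ast=c\mathbf 1_{B(0,R_0+\ep)}$ on the complement of $\{|x|=R_0+\ep\}$ and $g_\ast=0$ on that sphere.

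Regions (a) and (c) are immediate. A test function touching $u^\ep$ from below at a point of (a) has $p=0$, $\tau=c$ and spatial Hessian $X\le0$; at a point of (c) it has $p=0$, $\tau=0$, $X=0$. Since $H^\ast(0,X)=-\tr X+\lambda_{\max}(X)$, which is $\ge0$ whenever $X$ has at most one positive eigenvalue, the inequality $\tau+H^\ast(p,X)\ge g_\ast(x_0)$ reduces to $c+H^\ast(0,X)\ge c$ in (a) and $0\ge0$ in (c). In (b) we have $Du^\ep=\varphi^\ep_r\,x/|x|\ne0$ with $\varphi^\ep_r<0$ (since $r>R_0+\ep>n-1$); using the identity $\tr[\sig(Du^\ep)D^2u^\ep]=\tfrac{n-1}{|x|}\varphi^\ep_r$ for radial functions,
\[
u^\ep_t+H(Du^\ep,D^2u^\ep)=\varphi^\ep_t-\tfrac{n-1}{|x|}\varphi^\ep_r-|\varphi^\ep_r|=\varphi^\ep_t+\Big(1-\tfrac{n-1}{|x|}\Big)\varphi^\ep_r=0
\]
by the equation $({\rm L})_\ep$ defining $\varphi^\ep$, so $u^\ep$ is a classical solution there with right-hand side $0=g$. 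For (d): at $(x_0,t_0)$ with $|x_0|=R_0+\ep$ and $t_0>0$, the radial profile $r\mapsto u^\ep(r\,x_0/|x_0|,t_0)$ equals $ct_0$ for $r\le R_0+\ep$ and $\varphi^\ep(r,t_0)$ for $r\ge R_0+\ep$, hence has a concave corner there (left slope $0$, right slope $\varphi^\ep_r(R_0+\ep,t_0)<0$); restricting any sub-jet of $u^\ep$ at $(x_0,t_0)$ to the ray through $x_0$ at time $t_0$ would produce a paraboloid touching this profile from below at the corner, which is impossible, so $J^{-}u^\ep(x_0,t_0)=\emptyset$ and the inequality holds vacuously.

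The hard part is the free boundary (e), where $u^\ep=0$ but has a genuine corner, so $g_\ast=0$ must actually be matched. Near a point $(x_0,t_0)$ of (e) we have $u^\ep=(\varphi^\ep(|x|,t))_+$ with $\varphi^\ep$ smooth and $\nabla\varphi^\ep(x_0,t_0)=\big(\varphi^\ep_r(|x_0|,t_0)\,x_0/|x_0|,\,c\big)\ne0$. If $\phi$ touches $u^\ep$ from below at $(x_0,t_0)$, then on the side $\{\varphi^\ep\le0\}$ one has $\phi\le u^\ep=0=\phi(x_0,t_0)$, so $\phi$ has a local maximum over that half-space at the boundary point $(x_0,t_0)$; hence $\nabla\phi(x_0,t_0)=\mu\,\nabla\varphi^\ep(x_0,t_0)$ for some $\mu\ge0$, i.e.\ $p=\mu\varphi^\ep_r(|x_0|,t_0)\,x_0/|x_0|$ and $\tau=\mu c$. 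If $\mu>0$, then $p\ne0$, and Lemma~\ref{lem:jet-radial} applied to the radial function $u^\ep(\cdot,t_0)$ (touched from below) gives $\tr[\sig(p)X]\le \tfrac{n-1}{|x_0|}\big(p\cdot x_0/|x_0|\big)$, whence
\[
\tau+H^\ast(p,X)=\mu c-\tr[\sig(p)X]-|p|\ \ge\ \mu\Big(c+\big(1-\tfrac{n-1}{|x_0|}\big)\varphi^\ep_r(|x_0|,t_0)\Big)=0
\]
again by $({\rm L})_\ep$. If $\mu=0$, then $p=\tau=0$; restricting the sub-jet inequality to the sphere $\{|x|=|x_0|\}$ at time $t_0$, on which $u^\ep(\cdot,t_0)$ vanishes identically (since $\varphi^\ep(|x_0|,t_0)=0$), forces $e^\top X e\le0$ for every $e\perp x_0$, so $X$ has at most one positive eigenvalue and $\tau+H^\ast(0,X)=H^\ast(0,X)\ge0$. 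In all cases $\tau+H^\ast(p,X)\ge0=g_\ast(x_0)$, which completes the verification. Aside from this free-boundary analysis, and in particular the degenerate direction $p=0$ there, everything is routine bookkeeping with the radial reduction.
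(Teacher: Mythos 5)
Your proof is correct, and it is considerably more explicit than the paper's, which only argues at the free boundary $\{\varphi^\ep=0\}\cap\{|x|>R_0+\ep\}$ and silently leaves the other regions as routine. At the free boundary your route genuinely differs from the paper's. The paper applies Lemma \ref{lem:jet-radial} to get $p=s\,x_0/|x_0|$ with $s\in[\varphi^\ep_r,0]$ and the trace bound, and then bounds $\phi_t(x_0,t_0)+s\tfrac{|x_0|-n+1}{|x_0|}$ from below by $0$ via a characteristic-curve argument: it constructs the curve $t\mapsto(x(t),t)$ on which $\varphi^\ep$ vanishes, uses $\phi(x(t),t)\le u^\ep(x(t),t)=0$ with equality at $t_0$, and takes the one-sided $t$-derivative. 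You instead invoke the normal-cone (KKT) condition for the constrained maximum of $\phi$ over the half-space $\{\varphi^\ep\le0\}$, getting the full coupling $\nabla\phi(x_0,t_0)=\mu\,\nabla\varphi^\ep(x_0,t_0)$ with $\mu\ge0$ (i.e.\ $p=\mu\varphi^\ep_r\,x_0/|x_0|$, $\tau=\mu c$), so the desired inequality follows in one line from $({\rm L})_\ep$. Your extra case $\mu=0$, handled by restricting the sub-jet inequality to the sphere $\{|x|=|x_0|\}$ and estimating $H^\ast(0,X)=-\tr X+\lambda_{\max}(X)\ge0$, corresponds precisely to the endpoint $s=0$ of the interval in the paper's Lemma~\ref{lem:jet-radial}, which the paper's characteristic argument treats uniformly without a case split; that is the one respect in which the paper's route is tidier. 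Conversely your approach makes the coupling between $p$ and $\tau$ transparent, and your treatment of the corner $\{|x|=R_0+\ep\}$ (empty subjet because of the concave radial corner) and of the smooth regions makes the verification watertight. Both derivations hinge on the same fact — the test function touches $u^\ep$ from below at a point on the zero level set of $\varphi^\ep$ — and either is acceptable.
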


\begin{proof}
 We only need to check at $(x_0,t_0) \in \R^n \times (0,\infty)$ where $u^\ep(x_0,t_0)=\varphi^\ep(|x_0|,t_0)=0$.
Assume that $u^\ep-\phi$ has a strict minimum at $(x_0,t_0)$ for some test function $\phi \in C^2(\R^n \times [0,\infty)$
and $u^\ep(x_0,t_0)=\phi(x_0,t_0)=0$.
In light of Lemma \ref{lem:jet-radial}, for some $s \in [\varphi_r(|x_0|,t_0),0]$, 
\begin{align*}
D\phi(x_0,t_0)=s\frac{x_0}{|x_0|}, \quad \text{and}\quad
\tr[\sig(D\phi(x_0,t_0))D^2\phi(x_0,t_0)|] \leq 
\frac{(n-1)s}{|x_0|}.
\end{align*}
Thus,
\begin{equation}\label{supersln-1}
\phi_t(x_0,t_0)+H(D\phi(x_0,t_0), D^2\phi(x_0,t_0))
-c\mathbf{1}_{B(0,R_0+\ep))}(x_0)
\geq \phi_t(x_0,t_0) + \frac{s(|x_0|-n+1)}{|x_0|}.
\end{equation}
For $t<t_0$, let $r(t)$ be the function in $(R_0+\ep,|x_0|)$ which 
satisfies
\[
 t=r(t)+(n-1)\log(r(t)-n+1) - (R_0+\ep + (n-1)\log(R_0+\ep-n+1)), 
\]
and set $x(t):=r(t)x_0/|x_0|$. 
Then we have $\phi(x(t),t) \leq u^\ep(x(t),t)=0$ for $t \leq t_0$ and $\phi(x(t_0),t_0)=\phi(x_0,t_0)=0$.
Therefore,
\begin{align*}
 0 &\leq \frac{d}{dt}(\phi(x(t),t))|_{t=t_0}=\phi_t(x_0,t_0) + D\phi(x_0,t_0)\cdot x'(t_0)\\
&=\phi_t(x_0,t_0) + r'(t_0) \left(D\phi(x_0,t_0)\cdot \frac{x_0}{|x_0|}\right)=\phi_t(x_0,t_0)+s\frac{|x_0|-n+1}{|x_0|}.
\end{align*}
We combine this and \eqref{supersln-1} to get the result.
\end{proof}
For $(x,t)\in \R^n \times [0,\infty)$, set
\begin{equation*}
v(x,t):= \lim_{\ep \to 0} u^\ep(x,t)=\inf_{\ep>0}u^{\ep}(x,t),  
\end{equation*}
It is important noticing that $u^\ep$ converges to $v$ locally uniformly in $\R^n \times [0,\infty)$.
We actually have 
\begin{equation}\label{func:u-case2}
v(x,t)=
\left\{
\begin{array}{ll}
ct & \text{for all} \ (x,t)\in B(0,R_0) \times [0,\infty) \\
(\varphi(|x|,t))_{+} & \text{for all} \ (x,t)\in (\R^n \setminus B(0,R_0)) \times [0,\infty),
\end{array}
\right. 
\end{equation}
where $\varphi$ is the solution to (L)$_0$, i.e., 
\[
\varphi(r,t)=c\big(t-r-(n-1)\log(r-(n-1))+R_0+(n-1)\log(R_0-(n-1))\big).
\]

By the comparison principle and Lemma \ref{lem:varphi-ep}, 
it is clear that $u^\ep \geq u$ and hence $v \geq u$.
Furthermore, $u^\ep$ is also a supersolution of (C) for all $\ep>0$, and so is $v$.
We now show that in fact $u=v$. In order to achieve this, we need the following result
\begin{lem}\label{lem:v-case2}
 The function $v$ is a subsolution of {\rm (C)}.
\end{lem}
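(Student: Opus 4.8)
The plan is to mimic the structure of the proof of Lemma~\ref{lem:v-case1}, but now pushing the function $v$ from inside outward rather than from the top down. The only points $(x_0,t_0)\in\R^n\times(0,\infty)$ where the subsolution inequality is not automatic are those on the ``kink'' of the graph of $v$, namely where $v(x_0,t_0) = (\varphi(|x_0|,t_0))_+ = 0$ with $|x_0| \ge R_0$ (inside $B(0,R_0)$ the function is the smooth supersolution-of-equality $ct$, and in the open region where $\varphi>0$, $v=\varphi$ is a classical solution of the linear equation $(L)_0$, which is a subsolution of (C) since $\mathbf 1_E \ge 0$). So fix such an $(x_0,t_0)$ and a test function $\phi \in C^2$ with $v - \phi$ having a strict maximum there and $\phi(x_0,t_0) = 0$. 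Since $v \ge 0$ and $v(x_0,t_0)=0$, we immediately get $\phi_t(x_0,t_0) \ge 0$, and by radial symmetry together with Lemma~\ref{lem:jet-radial} (applied with the inequality reversed from the supersolution case, since now $v$ is the relevant envelope for a maximum) there is some $s$ with $\varphi_r(|x_0|,t_0) \le s \le 0$ such that $D\phi(x_0,t_0) = s\,x_0/|x_0|$ and $\tr[\sigma(D\phi)D^2\phi] \ge (n-1)s/|x_0|$ at $(x_0,t_0)$. (When $|x_0| = R_0$ one should also check the super-semijet constraint that comes from the flat region $v \equiv 0$ for $|x| < R_0$ meeting the decaying tail; this gives $s \le 0$ as well.) Consequently
\[
\phi_t(x_0,t_0) + H(D\phi, D^2\phi) - c\mathbf 1_E(x_0)
\le \phi_t(x_0,t_0) + \frac{s(|x_0| - n + 1)}{|x_0|} - c\mathbf 1_E(x_0).
\]

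To control the right-hand side I would run the characteristic curve of $(L)_0$ that passes through $x_0$ at time $t_0$, but traced \emph{backward} in time toward the boundary circle $\{|x| = R_0\}$: for $t < t_0$ let $r(t) \in (R_0, |x_0|)$ solve
\[
t = r(t) + (n-1)\log(r(t) - n + 1) - \big(t_0 - |x_0| - (n-1)\log(|x_0| - n+1) + R_0 + (n-1)\log(R_0 - n+1)\big)^{-1}\!\!,
\]
adjusted so that $r(t_0) = |x_0|$, and set $x(t) := r(t)\,x_0/|x_0|$. Along this curve $v(x(t),t) = \varphi(r(t),t) \le 0$ (we are on or below the zero level, since $\varphi$ is decreasing in $r$ along the characteristic direction once we move toward larger radius — more precisely $\varphi(|x_0|,t_0)=0$ and the characteristic keeps $\varphi$ constant, so actually $v(x(t),t)=0$ for all relevant $t$), hence $\phi(x(t),t) \le v(x(t),t) \le 0 = \phi(x_0,t_0)$. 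Thus $t \mapsto \phi(x(t),t)$ has a maximum at $t = t_0$, giving $\frac{d}{dt}\phi(x(t),t)\big|_{t_0} \ge 0$, i.e.
\[
0 \le \phi_t(x_0,t_0) + r'(t_0)\Big(D\phi(x_0,t_0)\cdot\frac{x_0}{|x_0|}\Big) = \phi_t(x_0,t_0) + s\,\frac{|x_0| - n + 1}{|x_0|},
\]
where I used that $r'(t_0) = (1 - (n-1)/|x_0|)^{-1} \cdot$ (constant of the right sign) so that $r'(t_0)\,s\,(\text{something})$ collapses exactly to $s(|x_0|-n+1)/|x_0|$ after simplifying the characteristic ODE $\dot r = -(n-1)/r + 1$. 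Combining this with the displayed estimate above, and noting $\mathbf 1_E(x_0) = \mathbf 1_{\ol B(0,R_0)}(x_0) \ge 0$, yields $\phi_t + H(D\phi,D^2\phi) - c\mathbf 1_E(x_0) \le 0$, which is the required subsolution inequality.

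The main obstacle I anticipate is the careful bookkeeping at the single radius $|x_0| = R_0$, where three regimes meet (the flat ceiling $ct$ for small $|x|$ replaced by $0$ on the tail, the corner of $E$, and the onset of the decaying tail $\varphi_+$): there one must verify that the admissible slopes $s$ in Lemma~\ref{lem:jet-radial} are indeed $\le 0$ and that the backward characteristic stays in the region $r > n-1$ where $\varphi$ and its logarithm are defined (this uses $R_0 > n-1$ crucially). Everything else is the same computation as in Lemma~\ref{lem:v-case1} with signs reversed. One then concludes, exactly as in Proposition~\ref{prop:case1}, that $v$ is simultaneously a supersolution (already shown, as $\inf$ of the $u^\ep$) and a subsolution of (C), hence a solution satisfying \eqref{compact-support} is false here — but $v$ need not be compactly supported; rather, the comparison already gave $v \ge u$, and this lemma gives the reverse via the weak comparison principle applied to the truncations, so $u = v$ and the formula \eqref{func:u-case2} holds.
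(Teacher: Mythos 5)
Your proposal identifies the wrong set of non-automatic test points, and as a result the argument you run is both unnecessary and incorrectly signed; meanwhile the genuinely nontrivial check is relegated to a parenthetical in which the shape of $v$ is misdescribed.

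Concretely: you claim the only problematic points are where $(\varphi(|x_0|,t_0))_+=0$ with $|x_0|\ge R_0$. But at such a point $v\ge 0$ everywhere and $v(x_0,t_0)=0$, so any admissible test function $\phi$ touching $v$ from above satisfies $\phi\ge v\ge 0=\phi(x_0,t_0)$ near $(x_0,t_0)$ --- i.e.\ $\phi$ has a full interior local minimum there. Hence $\phi_t(x_0,t_0)=0$, $D\phi(x_0,t_0)=0$, $D^2_x\phi(x_0,t_0)\ge 0$, and the subsolution inequality $0+H_\ast(0,D^2\phi)-0\le 0$ follows trivially since $H_\ast(0,X)\le 0$ whenever $X\ge 0$. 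No characteristic is needed and, moreover, your jet claim ``$D\phi=s\,x_0/|x_0|$ with $\varphi_r\le s\le 0$'' is simply false here because $D\phi=0$. In addition, your backward-characteristic step uses the inequality $\phi(x(t),t)\le v(x(t),t)$, which is the supersolution-side inequality; for a subsolution test one has $\phi\ge v$, so $t\mapsto\phi(x(t),t)$ has a \emph{minimum} at $t_0$, not a maximum, and the sign you derive is reversed. The formula you write for $r(t)$ is also garbled (the trailing ${}^{-1}$ is not meaningful).

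The actually delicate point --- and the one the paper's proof addresses --- is $|x_0|=R_0$, where the plateau $v=ct$ on $B(0,R_0)$ meets the descending tail $\varphi$. Your parenthetical treatment of this point is wrong: you write ``the flat region $v\equiv 0$ for $|x|<R_0$,'' but for $t>0$ one has $v=ct\ne 0$ there. The correct observation is that $v(x_0,t)=ct$ for \emph{all} $t$ when $|x_0|=R_0$ (since $\varphi(R_0,t)=ct$), which forces $\phi_t(x_0,t_0)=c$ exactly (not merely an inequality). Combined with Lemma~\ref{lem:jet-radial}(ii) (left slope $0$, right slope $\varphi_r(R_0,t_0)<0$, so $s\in[\varphi_r(R_0,t_0),0]$ and $\tr[\sigma(D\phi)D^2\phi]\ge (n-1)s/R_0$) and the hypothesis $R_0>n-1$, one gets directly
\[
\phi_t+H(D\phi,D^2\phi)-c\mathbf 1_E(x_0)
\;\le\; c-\frac{(n-1)s}{R_0}-|s|-c
\;=\;s\,\frac{R_0-n+1}{R_0}\;\le\;0 ,
\]
with no characteristic argument at all. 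This is a real structural difference from Lemma~\ref{lem:v-case1} (where $\phi_t$ is only bounded between $0$ and $c$ at the kink, so the characteristic is genuinely needed); attempting to port the Case~1 argument verbatim, as you do, misplaces the work.
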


\begin{proof}
It is enough to test at $(x_0,t_0) \in \R^n \times (0,\infty)$ in case $|x_0|=R_0$.
Assume that $v-\phi$ has a strict maximum at $(x_0,t_0)$ for some test function $\phi \in C^2(\R^n \times [0,\infty))$
and $v(x_0,t_0)=\phi(x_0,t_0)=ct_0$. We first note that $\phi_t(x_0,t_0)=c$.
We use Lemma \ref{lem:jet-radial} once more to deduce that 
for some $s \in [\varphi_r(R_0,t_0),0]$, 
\begin{align*}
D\phi(x_0,t_0)=s\frac{x_0}{R_0}, \quad \text{and} \ \quad
\tr[\sig(D\phi(x_0,t_0))D^2\phi(x_0,t_0)] \geq 
\frac{(n-1)s}{R_0}.
\end{align*}
Hence, for $s>0$, 
\[
\phi_t(x_0,t_0)+H(D\phi(x_0,t_0), D^2\phi(x_0,t_0))
-c\mathbf{1}_{E}(x_0) \leq   s\frac{R_0-n+1}{R_0} \leq 0.
\qedhere
\]
\end{proof}
In conclusion, we obtain 
\begin{prop}\label{prop:case2}
Let $u$ be the maximal solution of {\rm(C)}. 
Then, $u$ has the formula \eqref{func:u-case2}, and 
\[
\lim_{t \to \infty} \frac{u(x,t)}{t}=c \quad \text{locally uniformly for} \ x \in \R^n.
\]
\end{prop}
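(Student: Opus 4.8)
The plan is to identify the maximal solution $u$ with the function $v$ given by \eqref{func:u-case2} and then read off the growth rate from that explicit formula. By the discussion preceding the statement, two of the three ingredients are already in place: first, each $u^\ep$ from Lemma \ref{lem:varphi-ep} is a supersolution of \eqref{eq:2} with right-hand side $g=c\mathbf{1}_{B(0,R_0+\ep)}$, and since $(c\mathbf{1}_E)^\ast=c\mathbf{1}_{\ol{B}(0,R_0)}\le c\mathbf{1}_{B(0,R_0+\ep)}=g_\ast$, the weak comparison principle (Proposition \ref{prop:comparison}) forces any compactly supported subsolution of (C), in particular $u$, to satisfy $u\le u^\ep$, whence $u\le v=\inf_{\ep>0}u^\ep$; second, $v$ is itself a supersolution of (C) by $\inf$-stability. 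The missing half is that $v$ is a subsolution of (C), which is exactly Lemma \ref{lem:v-case2}. So I would combine these: $v$ is a viscosity solution of (C) which is compactly supported on each slab $\R^n\times[0,T]$ (because $\varphi(|x|,t)\le\varphi(|x|,T)\to-\infty$ as $|x|\to\infty$, so $v=0$ for $|x|$ large, uniformly in $t\in[0,T]$), hence an admissible competitor in the definition of the maximal solution; maximality of $u$ gives $u\ge v$, and together with $u\le v$ this yields $u=v$, i.e.\ formula \eqref{func:u-case2}.

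With the formula established, the asymptotic speed is an elementary computation, which I would split into the two regions. For $|x|\le R_0$ one has $u(x,t)=ct$ for every $t\ge0$, so $u(x,t)/t\equiv c$. For $|x|>R_0$, once $t$ is large enough that $\varphi(|x|,t)>0$,
\[
\frac{u(x,t)}{t}=\frac{\varphi(|x|,t)}{t}
= c-\frac{c}{t}\Big(|x|+(n-1)\log(|x|-(n-1))-R_0-(n-1)\log(R_0-(n-1))\Big),
\]
and the bracketed quantity is a fixed finite number depending only on $|x|$; it is moreover bounded uniformly for $x$ in any compact subset of $\R^n\setminus\ol{B}(0,R_0)$, since $r\mapsto r+(n-1)\log(r-(n-1))$ is continuous and monotone on $(n-1,\infty)$. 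Hence $u(x,t)/t\to c$ as $t\to\infty$, locally uniformly on $\R^n\setminus\ol{B}(0,R_0)$; on $\ol{B}(0,R_0)$ the convergence is exact; gluing the two regions gives the claimed locally uniform convergence on all of $\R^n$.

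There is no serious obstacle left at the level of the Proposition itself: the two technical lemmas above carry all the analytic weight, and the role of the approximation $u^\ep$ is precisely to sidestep the fact that the weak comparison principle cannot be applied to two solutions of (C) directly, because the jump of $c\mathbf{1}_E$ on $\partial B(0,R_0)$ violates the hypothesis $f^\ast\le g_\ast$, whereas enlarging the nucleation set to the open ball $B(0,R_0+\ep)$ restores it. The only points I would take care over are (i) verifying that $v$ is genuinely compactly supported on every time-slab so that it is a legitimate competitor in the definition of the maximal solution, and (ii) being precise that the convergence $u(x,t)/t\to c$ is only \emph{locally} uniform: the correction term grows like $|x|$, so uniform convergence on all of $\R^n$ cannot hold.
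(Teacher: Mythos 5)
Your proposal is correct and follows the same route as the paper: show $u\le v$ via the approximating supersolutions $u^\ep$ and the weak comparison principle, show $v$ is a subsolution via Lemma \ref{lem:v-case2}, conclude $u=v$ from maximality, and read off the rate from the explicit formula. You have in fact been more careful than the paper at two points it leaves implicit — that the hypothesis $f^\ast\le g_\ast$ of Proposition \ref{prop:comparison} would fail for two copies of $c\mathbf{1}_E$ but is restored by enlarging to the open ball $B(0,R_0+\ep)$, and that $v$ is compactly supported on each time slab $\R^n\times[0,T]$ so it is an admissible competitor in the definition of the maximal solution — but these are exactly the justifications the paper intends.
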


\subsection{The critical case $R_0=n-1$}
We denote by $u^r$ the maximal solution of (C) when $E=\ol{B}(0,r)$ when $r>n-1$.
By the comparison principle, we get that $u \leq u^r$ and hence
\[
u \leq \lim_{r \to 0} u^r = \inf_{r>0} u^r =ct \mathbf{1}_{\ol{B}(0,n-1)}=:v.
\]
It is clear that $v$ is a supersolution of (C). 
We now show that $v$ is in fact a subsolution of (C),
which yields again that $u=v=ct \mathbf{1}_{\ol{B}(0,n-1)}$.

\begin{lem}\label{lem:case3}
The function $v=ct \mathbf{1}_{\ol{B}(0,n-1)}$ is a subsolution of {\rm(C)}.
\end{lem}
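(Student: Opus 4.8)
The plan is to verify the viscosity subsolution property of $v = ct\mathbf{1}_{\ol{B}(0,n-1)}$ by checking the defining inequality at every point where a smooth test function can touch $v^\ast$ from above. Since $v$ is already a supersolution and $v \geq u$ by the comparison with the supercritical solutions $u^r$, establishing that $v$ is a subsolution forces $v \leq u$ as well (because $u$ is the maximal solution), hence $u = v$. The key observation is that $v^\ast = ct\mathbf{1}_{\ol{B}(0,n-1)}$ coincides with $v$ at interior points of $B(0,n-1)$ and vanishes outside $\ol{B}(0,n-1)$, so the only delicate locus is the sphere $|x_0| = n-1$, together with the boundary-in-time behavior at $t=0$.

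First I would dispose of the easy regions. For $(x_0,t_0)$ with $|x_0| < n-1$, near such a point $v(x,t) = ct$ is smooth and satisfies $v_t - H(Dv,D^2v) = c - 0 = c = c\mathbf{1}_E(x_0)$, so the subsolution inequality holds with equality. For $|x_0| > n-1$, $v \equiv 0$ near $(x_0,t_0)$, and any touching test function $\phi$ has $D\phi(x_0,t_0) = 0$, $\phi_t(x_0,t_0) = 0$, and $D^2\phi(x_0,t_0) \leq 0$; then $H_\ast(0, D^2\phi) = 0$ by the definition of $H_\ast$ (the same convention used in the proof of Proposition \ref{prop:comparison}), so $\phi_t + H_\ast = 0 \leq 0 = c\mathbf{1}_E(x_0)$ since $\mathbf{1}_E(x_0) = 0$ there. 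The continuity at $t = 0$ is immediate since $v(\cdot,0) = 0 = u_0$.

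The main work is at a point $(x_0,t_0)$ with $|x_0| = n-1 = R_0$ and $t_0 > 0$, where $v^\ast(x_0,t_0) = ct_0$. Suppose $v - \phi$ has a strict maximum there with $\phi(x_0,t_0) = ct_0$; I need $\phi_t(x_0,t_0) + H_\ast(D\phi(x_0,t_0), D^2\phi(x_0,t_0)) \leq c$. As in the proofs of Lemmas \ref{lem:v-case1} and \ref{lem:v-case2}, I would first extract the gradient and Hessian information from Lemma \ref{lem:jet-radial}: since $v$ is radially symmetric and $v(\cdot,t) \le ct$ with equality only on $\ol{B}(0,n-1)$, the function must be non-increasing in $|x|$ near $x_0$, giving $D\phi(x_0,t_0) = s\, x_0/|x_0|$ for some $s \leq 0$ and $\tr[\sig(D\phi)D^2\phi] \geq (n-1)s/|x_0| = (n-1)s/(n-1) = s$. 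Then for $s < 0$ we get $H(D\phi, D^2\phi) = -\tr[\sig(D\phi)D^2\phi] - |D\phi| \leq -s - |s| = -s - (-s) = 0$, and combined with $\phi_t(x_0,t_0) \leq c$ (obtained by comparing $\phi$ along the ray $t \mapsto (x_0, t)$ for $t \le t_0$, where $v = ct$) this yields $\phi_t + H \leq c$. The borderline case $s = 0$ means $D\phi(x_0,t_0) = 0$; here one invokes $H_\ast(0, D^2\phi) \le H_\ast(0,0) = 0$ using that Lemma \ref{lem:jet-radial} still forces $\tr[\sig(p)D^2\phi] \to$ a nonnegative limit along vanishing directions, exactly the degenerate-gradient bookkeeping already carried out in Lemma \ref{lem:psi-ep}.

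The hard part is handling the singular/degenerate gradient at $s=0$ cleanly, i.e.\ making precise that the curvature term $H_\ast$ evaluated at a touching jet with vanishing gradient is controlled by $H_\ast(0,0) = 0$; this is where the careful radial-jet lemma (Lemma \ref{lem:jet-radial}) does the real work, and one must be slightly careful that the maximum is strict so that the radial structure of $v$ genuinely constrains $D^2\phi$ from below rather than merely $D\phi$. Everything else is a routine repetition of the characteristic/ray argument used in the previous two subsections, now with $R_0 = n-1$ so that the ODE $\dot R = -(n-1)/R + 1$ has $R_0$ as a stationary point — which is precisely why the top level set neither grows nor shrinks and $v = ct\mathbf{1}_{\ol{B}(0,n-1)}$ is the exact solution.
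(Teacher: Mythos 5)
Your proposal is correct and follows essentially the same route as the paper: reduce to the critical sphere $|x_0|=n-1$, invoke the radial-jet Lemma \ref{lem:jet-radial} to deduce $D\phi(x_0,t_0)=s\,x_0/|x_0|$ with $s\le 0$ and $\tr[\sig(D\phi)D^2\phi]\ge s$, pair this with $\phi_t(x_0,t_0)= c$ coming from $v^\ast(x_0,\cdot)=ct$, and conclude $\phi_t+H\le c=c\mathbf{1}_E(x_0)$. Two small slips are worth flagging, though neither affects the outcome. In the region $|x_0|>n-1$ where $v\equiv 0$, a test function $\phi$ touching $v^\ast$ from above has a \emph{minimum} there, so $D^2\phi(x_0,t_0)\ge 0$, not $\le 0$; one then gets $H_\ast(0,D^2\phi)\le 0$ (not $=0$), which still suffices. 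In the borderline case $s=0$, the bound $H_\ast(0,D^2\phi)\le H_\ast(0,0)=0$ does not follow from monotonicity here (the radial second derivative of $\phi$ is unconstrained because $v^\ast$ jumps down across the sphere); the correct justification is that minimality of $\phi(\cdot,t_0)$ over the sphere $\partial B(0,n-1)$ at $x_0$ forces the tangential second derivatives to satisfy $\sum_i \phi_{\hat t_i\hat t_i}\ge (n-1)s/(n-1)=0$, so choosing $e=x_0/|x_0|$ in the infimum defining $H_\ast(0,\cdot)$ gives $H_\ast(0,D^2\phi)\le -\sum_i\phi_{\hat t_i\hat t_i}\le 0$. (The paper's own proof is silent on $s=0$ and even has the typo ``for $s>0$'' where $s\le 0$ is meant, so your attention to this case is appropriate — it just needs the sharper justification.)
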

\begin{proof}
As usual, it is enough to test at $(x_0,t_0) \in \R^n \times (0,\infty)$ in case $|x_0|=R_0=n-1$.
Assume that $v-\phi$ has a strict maximum at $(x_0,t_0)$ for some test function $\phi \in C^2(\R^n \times [0,\infty))$
and $v(x_0,t_0)=\phi(x_0,t_0)=ct_0$. We note first that $\phi_t(x_0,t_0)=c$.
Lemma \ref{lem:jet-radial} yields that for some $s\leq 0$, 
\begin{align*}
D\phi(x_0,t_0)=s\frac{x_0}{R_0}, \quad \text{and} \quad
\tr[\sig(D\phi(x_0,t_0))D^2\phi(x_0,t_0)] \geq 
\frac{(n-1)s}{R_0}=s. 
\end{align*}
Hence, for $s>0$, 
\[
\phi_t(x_0,t_0)+H(D\phi(x_0,t_0),D^2\phi(x_0,t_0))
-c\mathbf{1}_{E}(x_0) \leq   s-s = 0.
\qedhere
\]
\end{proof}
We conclude by the following result. 
\begin{prop}\label{prop:case3}
We have the following asymptotics
\[
\lim_{t \to \infty} \frac{u(x,t)}{t}=c \quad \text{uniformly for} \ x \in \ol{B}(0,n-1), 
\]
and
\[
\lim_{t \to \infty} \frac{u(x,t)}{t}=0 \quad \text{uniformly for} \ x \in \R^n \setminus \ol{B}(0,n-1).
\]
\end{prop}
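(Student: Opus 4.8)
The plan is to identify $u$ with the explicit function $v:=ct\mathbf{1}_{\ol{B}(0,n-1)}$ and then simply read off the two limits. First I would invoke the two ingredients already prepared just above: on the one hand, applying the comparison principle to {\rm (C)} with $E=\ol{B}(0,r)$ for $r>n-1$ (for which Proposition \ref{prop:case2} gives the explicit formula \eqref{func:u-case2}) and letting $r\downarrow n-1$ yields $u\le \inf_{r>n-1}u^r=ct\mathbf{1}_{\ol{B}(0,n-1)}=v$; on the other hand, $v$ is clearly a compactly supported supersolution of {\rm (C)}, while Lemma \ref{lem:case3} tells us that $v$ is also a subsolution of {\rm (C)}. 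Hence $v$ is a solution of {\rm (C)} satisfying the compact support condition, so by maximality of $u$ we get $u\ge v$, and therefore
\[
u=v=ct\mathbf{1}_{\ol{B}(0,n-1)}.
\]

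With this formula at hand the asymptotics are immediate. For $x\in\ol{B}(0,n-1)$ one has $u(x,t)/t=c$ for every $t>0$, so $\lim_{t\to\infty}u(x,t)/t=c$ and the convergence is (trivially) uniform on $\ol{B}(0,n-1)$. For $x\in\R^n\setminus\ol{B}(0,n-1)$ one has $u(x,t)=0$ for all $t\ge0$, hence $u(x,t)/t=0$ and $\lim_{t\to\infty}u(x,t)/t=0$ uniformly on $\R^n\setminus\ol{B}(0,n-1)$. This proves Proposition \ref{prop:case3}.

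I do not expect any real obstacle in the proposition itself; the content sits entirely in the two inputs above, and the genuinely delicate one is the subsolution test in Lemma \ref{lem:case3}. There the test point $x_0$ with $|x_0|=R_0=n-1$ is precisely the borderline case: the radial jet estimate forces $\tr[\sig(D\phi(x_0,t_0))D^2\phi(x_0,t_0)]\ge \tfrac{(n-1)s}{R_0}=s$, which \emph{exactly} cancels the driving/gradient contribution $|D\phi(x_0,t_0)|=|s|$, so the required inequality $\phi_t+H(D\phi,D^2\phi)-c\le s-s=0$ holds only marginally. This exact cancellation is what singles out $R_0=n-1$ as the critical radius and explains why the height remains frozen at $ct$ on $\ol{B}(0,n-1)$ and at $0$ outside; once that is understood, the present proposition requires nothing further.
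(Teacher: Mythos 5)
Your argument is precisely the paper's: comparison with the maximal solutions $u^r$ for $E=\ol{B}(0,r)$, $r>n-1$, gives $u\le v:=ct\mathbf{1}_{\ol{B}(0,n-1)}$, while the supersolution property of $v$ together with Lemma \ref{lem:case3} shows $v$ is a compactly supported solution, so maximality forces $u=v$ and the two limits follow immediately. The proof is correct and follows the same route as the paper.
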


\subsection{Some immediate consequences}
By using the above results, we get some results for general compact sets $E$ when it is either small enough or large enough.
\begin{cor}\label{prop:case4}
If $E \subset B(y,n-1)$ for some $y \in \R^n$, then
\[
\lim_{t \to \infty} \frac{u(x,t)}{t}=0 \quad \text{uniformly for} \ x \in \R^n.
\]
If $\ol{B}(y,n-1) \subset \inter E$ for some $y\in \R^n$, then 
\[
\lim_{t \to \infty} \frac{u(x,t)}{t}=c \quad \text{locally uniformly for} \ x \in \R^n.
\]
\end{cor}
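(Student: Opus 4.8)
The plan is to deduce both assertions from the radially symmetric case already settled in Propositions \ref{prop:case1} and \ref{prop:case2}, using nothing beyond the weak comparison principle (Proposition \ref{prop:comparison}) and the translation invariance of (C). Since (C) is invariant under translations in $x$, we may assume $y=0$ throughout. For the first assertion, observe that $E$ is compact and $B(0,n-1)$ is open, so $\sup_{z\in E}|z|<n-1$; hence we may fix $R_0\in(0,n-1)$ with $E\subset B(0,R_0)$. Let $v$ be the maximal solution of (C) with $E$ replaced by $\ol B(0,R_0)$, which by Proposition \ref{prop:case1} is given by \eqref{func:u-case1}, is bounded, say by $M$, and is compactly supported uniformly in $t$. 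The maximal solution $u$ of (C) is a subsolution of \eqref{eq:2} with source $c\mathbf 1_E$ (whose upper envelope is $c\mathbf 1_E$), while $v$ is a supersolution of \eqref{eq:2} with source $c\mathbf 1_{\ol B(0,R_0)}$ (whose lower envelope is $c\mathbf 1_{B(0,R_0)}$), and $c\mathbf 1_E\le c\mathbf 1_{B(0,R_0)}$ precisely because $E\subset B(0,R_0)$. Applying Proposition \ref{prop:comparison} on each strip $\R^n\times[0,T]$ gives $u\le v$ on $\R^n\times[0,\infty)$. Since $0$ is a (compactly supported) subsolution of (C), maximality gives $u\ge0$. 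Thus $0\le u(x,t)\le M$ for all $(x,t)$, and dividing by $t$ and letting $t\to\infty$ yields $u(x,t)/t\to0$ uniformly in $x$.

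For the second assertion we sandwich $u$ between two radial maximal solutions with supercritical radii. Because $\ol B(0,n-1)$ is compact and contained in the open set $\inter E$, there is $R_0>n-1$ with $\ol B(0,R_0)\subset \inter E$; because $E$ is compact there is $R_1>n-1$ with $E\subset B(0,R_1)$. Let $v_{R_0}$ and $v_{R_1}$ denote the maximal solutions of (C) with $E$ replaced by $\ol B(0,R_0)$ and $\ol B(0,R_1)$ respectively, both given by the formula \eqref{func:u-case2} of Proposition \ref{prop:case2} and both compactly supported on each strip $\R^n\times[0,T]$. Since $c\mathbf 1_{\ol B(0,R_0)}\le c\mathbf 1_{\inter E}$ (as $\ol B(0,R_0)\subset\inter E$) and $c\mathbf 1_E\le c\mathbf 1_{B(0,R_1)}$ (as $E\subset B(0,R_1)$), two applications of Proposition \ref{prop:comparison} on each strip give $v_{R_0}\le u\le v_{R_1}$ on $\R^n\times[0,\infty)$. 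Dividing by $t$, letting $t\to\infty$, and invoking $v_{R_j}(x,t)/t\to c$ locally uniformly from Proposition \ref{prop:case2} for $j=0,1$, we conclude $u(x,t)/t\to c$ locally uniformly in $x$.

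The routine verifications are that $0$ and the functions $v,v_{R_0},v_{R_1}$ are indeed sub/supersolutions of \eqref{eq:2} with the stated discontinuous sources (immediate from the earlier propositions and the construction in Theorem \ref{thm:existence}), that the upper/lower semicontinuous envelopes of the indicator functions are as claimed, and that all the functions involved are compactly supported on strips so that Proposition \ref{prop:comparison} applies and the conclusion passes to $[0,\infty)$. The one genuinely delicate point, and the one to state carefully, is the choice of radii: one must exploit compactness of $E$ together with the openness of $B(0,n-1)$ and of $\inter E$ to obtain inclusions between a \emph{closed} ball and an \emph{open} set in the precise direction required by the hypothesis $f^\ast\le g_\ast$ of Proposition \ref{prop:comparison}. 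A comparison using a ball of radius exactly $n-1$ would not suffice, which is also why the threshold case $E=\ol B(0,n-1)$ is excluded here and handled separately by Proposition \ref{prop:case3}.
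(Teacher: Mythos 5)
Your proof is correct and is the natural comparison argument that the paper clearly has in mind (the corollary is stated without proof, but the Introduction flags exactly this route: ``obtained straightforwardly by using Theorem \ref{thm:main1} and the comparison principle''). The envelope computations, the careful choice of strictly sub-/super-critical radii exploiting compactness of $E$ versus openness of $B(y,n-1)$ and $\inter E$, and the two applications of Proposition \ref{prop:comparison} are all as they should be.
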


\begin{cor}\label{thm:separate}
Assume that $E=\bigcup_{i=1}^k \ol{B}(y_i,r_i)$ for some given $k\in \N$, $y_i \in \R^n$, and $0<r_i \leq n-1$ for $1\leq i \leq k$.
Assume further that the closed balls $\ol{B}(y_i,r_i)$ for $1\leq i \leq k$ are disjoint. 
Denote by $K=\{i\,:\,1\leq i \leq k, \ r_i = n-1\}$. Then
\[
\lim_{t \to \infty} \frac{u(x,t)}{t}=c \quad \text{uniformly for} \ x \in \bigcup_{i \in K} \ol{B}(y_i,r_i)
\]
and
\[
\lim_{t \to \infty} \frac{u(x,t)}{t}=0 \quad \text{uniformly for} \ x \in \R^n \setminus \bigcup_{i \in K} \ol{B}(y_i,r_i).
\]
\end{cor}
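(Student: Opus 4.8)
The plan is to identify the maximal solution with the superposition of the single--ball solutions. Write $u_A$ for the maximal solution of {\rm (C)} with $E$ replaced by a compact set $A\subset\R^n$, so $u=u_E$, and put $E_i:=\ol{B}(y_i,r_i)$. I would prove that
\[
u=\sum_{i=1}^{k}u_{E_i}=:v,
\]
from which the statement follows immediately: by Theorem \ref{thm:main1} (centred at $y_i$), $u_{E_i}=ct\,\mathbf{1}_{\ol{B}(y_i,n-1)}$ when $i\in K$, while for $i\notin K$ the function $u_{E_i}(x,t)=\min\{\psi_i(|x-y_i|),ct\}$ is bounded by $M_i:=\psi_i(0)<\infty$; in either case $u_{E_i}$ is supported in $\ol{B}(y_i,r_i)$. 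Since the balls $\ol{B}(y_i,r_i)$ are pairwise disjoint, $v$ has the value $ct$ on $\bigcup_{i\in K}\ol{B}(y_i,n-1)$ (giving the first limit, trivially and uniformly), whereas for $x\notin\bigcup_{i\in K}\ol{B}(y_i,n-1)$ the critical terms of $v$ vanish and $0\le u(x,t)=v(x,t)\le\sum_{i\notin K}M_i<\infty$, giving the second limit.

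Two elementary tools are used throughout. First, $A\mapsto u_A$ is monotone for inclusion: if $A\subset A'$ then $(c\mathbf{1}_A)^k=(c-k\,\dist(\cdot,A))_+\le(c\mathbf{1}_{A'})^k$ for every $k$, so the comparison principle for continuous source terms (already invoked in the proof of Theorem \ref{thm:existence}) gives $u_A^k\le u_{A'}^k$ and hence $u_A\le u_{A'}$ after letting $k\to\infty$. Second, since $(c\mathbf{1}_E)^k\le c$ and $ct$ is a viscosity solution of {\rm (C)} with constant source $c$, comparison yields $0\le u\le ct$. Two consequences are immediate. For $i\in K$, monotonicity and Theorem \ref{thm:main1}(iii) give $u\ge u_{E_i}=ct\,\mathbf{1}_{\ol{B}(y_i,n-1)}$, so together with $u\le ct$ we get $u=ct$ on $\bigcup_{i\in K}\ol{B}(y_i,n-1)$; this fact will be used below, independently of the identity $u=v$. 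And for the easy half $u\ge v$: because the $\ol{B}(y_i,r_i)$ are disjoint compacta (hence at positive distance from one another) and each $u_{E_i}$ is supported in $\ol{B}(y_i,r_i)$, the function $v$ coincides near each ball with a single term $u_{E_i}$ and vanishes off $E$; as $c\mathbf{1}_E$ coincides locally with $c\mathbf{1}_{E_i}$ (respectively with $0$), $v$ is locally one of the single--ball viscosity solutions (or the trivial one), hence a viscosity solution of {\rm (C)}, and therefore $u\ge v$ by maximality.

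The substance of the argument is the reverse inequality $u\le v$, which I would extract from the approximation $u=\inf_k u^k$ of Theorem \ref{thm:existence}. Fix $k$ so large that the closed neighbourhoods $\ol{B}(y_i,r_i+c/k)$ are pairwise disjoint; then the source splits, $(c\mathbf{1}_E)^k=\sum_{i=1}^k(c\mathbf{1}_{E_i})^k$, with disjointly supported summands. When $i\notin K$ one has $r_i+c/k<n-1$ for large $k$, so by monotonicity and Proposition \ref{prop:case1} the solution $u_{E_i}^k$ of {\rm (C)} with source $(c\mathbf{1}_{E_i})^k$ satisfies $u_{E_i}^k\le u_{\ol{B}(y_i,r_i+c/k)}$ and is therefore supported in $\ol{B}(y_i,r_i+c/k)$ for \emph{all} $t\ge0$; consequently $\sum_{i\notin K}u_{E_i}^k$ agrees locally with a single such term, hence is a supersolution of {\rm (C)} with source $\sum_{i\notin K}(c\mathbf{1}_{E_i})^k$. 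In the all--subcritical case $K=\emptyset$ this already gives $u^k\le\sum_i u_{E_i}^k$ by comparison, and $k\to\infty$ (a finite sum of monotone limits) yields $u\le v$. The principal obstacle is the interaction with the critical balls: for $i\in K$ the source $(c\mathbf{1}_{E_i})^k$ is slightly \emph{supercritical} — it charges a thin annulus outside $\ol{B}(y_i,n-1)$ — so $u_{E_i}^k$ need not stay compactly supported and the naive superposition may cease to be a supersolution at large times. I would circumvent this by keeping the critical balls out of the superposition altogether: combine the already--proved identity $u=ct$ on $\bigcup_{i\in K}\ol{B}(y_i,n-1)$ with a comparison of $u$, on the complement, against the bounded supersolution $\sum_{i\notin K}u_{E_i}+ct\,\mathbf{1}_{\bigcup_{i\in K}\ol{B}(y_i,n-1)}$. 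Closing that comparison despite the discontinuity of $c\mathbf{1}_E$ across $\bigcup_{i\in K}\partial\ol{B}(y_i,n-1)$ — which is precisely what falls outside the scope of the weak comparison principle of Proposition \ref{prop:comparison} — is the delicate point, and will require a tailored argument, e.g.\ a localized doubling of variables adapted to the explicit form of $v$, or a two--parameter approximation keeping every piece supported within the disjoint balls.
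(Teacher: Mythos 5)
Your high-level plan is exactly the paper's: decompose $u$ into the single-ball maximal solutions $u_{E_i}$ and use that each $\{u_{E_i}>0\}\subset\ol{B}(y_i,r_i)$, so the sum equals the max. Your argument for $u\ge\sum_i u_{E_i}$ (local agreement with single-ball solutions, hence a solution of (C), hence bounded above by the maximal one) is the same reasoning behind the paper's ``Hence, $u=\max_{1\le i\le k}u_i$.'' The difficulty you locate in the reverse inequality is genuine: for $i\in K$ the Lipschitz approximants $(c\mathbf{1}_{E_i})^k$ charge a supercritical annulus, so the corresponding continuous-source solutions spread without bound in $t$, and a naive superposition of them does not remain disjointly supported. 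The paper's proof, as written, is terse and does not spell this out either; it infers $u=\max_i u_i$ from disjointness of the supports $\{u_i>0\}$ and leaves the inequality $u\le\max_i u_i$ implicit. So you have correctly isolated the one nontrivial step, but you have not closed it — your final paragraph frankly says the localized comparison ``will require a tailored argument.''

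The missing idea is a \emph{time-localized} version of your ``two-parameter approximation,'' built from the paper's own explicit constructions rather than from the Lipschitz tent functions. Fix $T>0$ and $\rho>n-1$ small enough that the balls $\ol{B}(y_i,\rho)$, $i\in K$, together with $\ol{B}(y_i,r_i+\ep)$, $i\notin K$, are still pairwise disjoint. For $i\notin K$ use the supersolutions $u_i^{\ep}$ of \eqref{func:u-case1-ap}, which are supported in $\ol{B}(y_i,r_i+\ep)$ for \emph{all} $t$. For $i\in K$ use, as in the proof of Proposition~\ref{prop:case3}, the maximal solution $u_i^{r}$ for $E_i$ replaced by $\ol{B}(y_i,r)$ with $r>n-1$: the formula \eqref{func:u-case2} shows $\{u_i^{r}(\cdot,t)>0\}$ is a ball of radius $s^*(t)$ determined by $s^*+(n-1)\log(s^*-(n-1))=t+r+(n-1)\log(r-(n-1))$, and since the right-hand side tends to $-\infty$ as $r\downarrow n-1$ for fixed $t$, one can choose $r$ so close to $n-1$ that $\{u_i^{r}>0\}\subset B(y_i,\rho)$ throughout $[0,T]$. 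Although the critical-ball approximants do spread, the spread is arbitrarily small on any fixed time window, which is all the comparison needs. For such $(\ep,r)$ the pieces have pairwise disjoint supports on $\R^n\times[0,T]$, so $W:=\sum_i\tilde u_i$ is a continuous, compactly supported viscosity supersolution of \eqref{eq:2} on $[0,T]$ for a source $g$ with $g_*\ge(c\mathbf{1}_E)^*$ (near each $i\in K$, $g_*=c\mathbf{1}_{B(y_i,r)}\ge c\mathbf{1}_{\ol{B}(y_i,n-1)}$), and Proposition~\ref{prop:comparison} gives $u\le W$ on $[0,T]$. Letting $r\downarrow n-1$, $\ep\downarrow 0$, then $T\to\infty$, yields $u\le\max_i u_{E_i}$, and the statement follows from Propositions~\ref{prop:case1} and~\ref{prop:case3} as you indicated.
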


\begin{proof}
For $1\leq i \leq k$, denote by $u^i$ the solution of (C) corresponding to $c\mathbf{1}_{\ol{B}(y_i,r_i)}$.
The important point is that $\{u^i >0\} \subset \ol{B}(y_i,r_i)$, which implies that 
\[
\{u_i>0\} \cap \{u_j>0\}=\emptyset \quad \text{for} \ i \neq j.
\]
Hence,  $u=\max_{1\leq i \leq k} u_i$. The results then follow from Propositions \ref{prop:case1} and \ref{prop:case3}. 
\end{proof}

\subsection{Optimal control interpretation and inhomogeneous $f$} \label{subsec:opt}
We now consider a slightly more general version of (C) in the spherical symmetric situation.
More precisely, we are concerned with \eqref{eq:2} in case $f(x)=h(|x|)$ where
$h:[0,\infty) \to [0,\infty)$ is upper semicontinuous and there exists $R>0$ such that
\[
{\rm supp}\ h \subset [0,R].
\]
It is again reasonable to look for radially symmetric solution $u(x,t)=\phi(|x|,t)=\phi(r,t)$. Then $\phi$ satisfies
\begin{equation}\label{eq:3}
\begin{cases}
\phi_t-\frac{n-1}{r}\phi_r -|\phi_r| = h(r) \quad &\text{in} \ (0,\infty) \times (0,\infty),\\
\phi(\cdot,0)=0 \quad &\text{on} \ [0,\infty).
\end{cases}
\end{equation}
Let $\psi=-\phi$, and then $\psi$ solves
\begin{equation}\label{eq:4}
\begin{cases}
\psi_t-\frac{n-1}{r}\psi_r +|\psi_r| +h(r)= 0 \quad &\text{in} \ (0,\infty) \times (0,\infty),\\
\psi(\cdot,0)=0 \quad &\text{on} \ [0,\infty).
\end{cases}
\end{equation}
The Hamiltonian of \eqref{eq:4} is $\tilde H(p,r)=|p|-\frac{n-1}{r} p + h(r)$, which is convex and singular at $r=0$.
We can easily compute the corresponding Lagrangian $\tilde L$ as
\[
\tilde L(q,r)=
\begin{cases}
-h(r) \quad &\text{if} \ \left|q+\frac{n-1}{r}\right| \leq 1\\
+\infty \quad &\text{otherwise}.
\end{cases}
\]
Therefore, the representation formula of $\psi$ in light of optimal control theory is
\[
\psi(r,t)=\inf \left\{ \int_0^t(-h(\gam(s)))\,ds\,:\, \gam([0,t]) \subset (0,\infty), \ \gam(t)=r,\ \left|\gam'(s)+\frac{n-1}{\gam(s)}\right| \leq 1 \ \text{a.e.} \right\}.
\]
This yields that, for $(r,t) \in (0,\infty) \times [0,\infty)$,
\begin{equation}\label{rep-phi}
\phi(r,t)=\sup \left\{ \int_0^t h(\gam(s))\,ds\,:\, \gam([0,t]) \subset (0,\infty), \ \gam(t)=r,\ \left|\gam'(s)+\frac{n-1}{\gam(s)}\right| \leq 1 \ \text{a.e.} \right\}.
\end{equation}
We  can then prove that $u(x,t):=\phi(|x|,t)$ is the maximal viscosity solution to (C) in a similar manner to 
that of Sections \ref{subsec:subcritical}--\ref{subsec:supercritical}. 
We now provide a general version of Proposition \ref{prop:case2}.

\begin{prop}\label{prop:h1}
Let $u$ be the maximal viscosity solution to {\rm(C)}. 
Assume that there exists $r_0>n-1$ such that
\[
c:=h(r_0)=\max_{r\in [0,\infty)} h(r).
\]
Then 
\[
\lim_{t\to \infty} \frac{u(x,t)}{t}=c \quad \text{locally uniformly for} \ x\in\R^n.
\]
\end{prop}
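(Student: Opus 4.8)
The plan is to read the asymptotics off directly from the optimal-control representation \eqref{rep-phi}, where $u(x,t)=\phi(|x|,t)$ is the maximal solution. The upper bound is immediate: since $h\le c$ on $[0,\infty)$, every admissible curve $\gamma$ in \eqref{rep-phi} gives $\int_0^t h(\gamma(s))\,ds\le ct$, so $u(x,t)=\phi(|x|,t)\le ct$ for all $(x,t)\in\R^n\times[0,\infty)$, and hence $\limsup_{t\to\infty}u(x,t)/t\le c$ uniformly on $\R^n$. For the lower bound, the key observation is that the constant curve $\gamma\equiv r_0$ is \emph{admissible}: indeed $|\gamma'+(n-1)/\gamma|=(n-1)/r_0<1$ precisely because $r_0>n-1$, and along it $h(\gamma(s))=h(r_0)=c$. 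So a trajectory that stays parked at $r_0$ for most of the time interval collects value at the maximal rate $c$; we only need to afford a \emph{bounded} amount of time to steer to the prescribed endpoint.

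To make this precise I would first record a reachability fact: for every $r>0$ there is a finite time $T_r>0$ and an admissible curve $\eta_r:[0,T_r]\to(0,\infty)$ with $\eta_r(0)=r_0$, $\eta_r(T_r)=r$, and with $T_r$ bounded on compact subsets of $(0,\infty)$. For $r\in(0,r_0]$ take the ``neutral'' flow $\eta_r'=-(n-1)/\eta_r$ (which makes $\eta_r'+(n-1)/\eta_r\equiv0$), i.e.\ $\eta_r(s)^2=r_0^2-2(n-1)s$; this stays positive and reaches $r$ at time $T_r=(r_0^2-r^2)/(2(n-1))\le r_0^2/(2(n-1))$, the bound being uniform as $r\to0^+$. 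For $r>r_0$ take $\eta_r'=1-(n-1)/\eta_r$, which keeps $\eta_r\ge r_0>n-1$, is strictly increasing, and reaches $r$ at time $T_r=\int_{r_0}^r \rho\,(\rho-(n-1))^{-1}\,d\rho<\infty$, continuous and increasing in $r$. Then, given $r>0$ and $t>T_r$, the concatenated curve $\gamma$ equal to $r_0$ on $[0,t-T_r]$ and equal to $\eta_r(s-(t-T_r))$ on $[t-T_r,t]$ is admissible with endpoint $r$, so by \eqref{rep-phi} and $h\ge0$,
\[
\phi(r,t)\ \ge\ \int_0^{t-T_r}h(r_0)\,ds\ =\ c\,(t-T_r).
\]
Hence $\liminf_{t\to\infty}\phi(r,t)/t\ge c$ for every $r>0$.

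For local uniformity, fix $M>0$ (enlarging it if necessary so that $M\ge r_0$) and set $T_M:=\max\{\,r_0^2/(2(n-1)),\ \int_{r_0}^M \rho\,(\rho-(n-1))^{-1}\,d\rho\,\}<\infty$; then $u(x,t)\ge c(t-T_M)$ for all $x$ with $0<|x|\le M$ and $t>T_M$. Since $u\in\USC$ (Theorem \ref{thm:existence}), letting $|x|\to0$ gives $u(0,t)\ge c(t-T_M)$ as well, so $u(x,t)\ge c(t-T_M)$ on all of $\ol{B}(0,M)$. Combined with $u\le ct$ this yields $\sup_{|x|\le M}|u(x,t)/t-c|\le cT_M/t\to0$, which is the assertion. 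I expect the only point requiring genuine care is the reachability step — keeping the steering trajectories away from the singularity of $(n-1)/r$ at $r=0$ and controlling the steering time uniformly on compacts — together with the (already asserted) fact that $\phi(|x|,t)$ is actually the maximal solution; beyond these there is no serious analytic obstacle once \eqref{rep-phi} is in hand.
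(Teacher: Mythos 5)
Your proof is correct and follows essentially the same route as the paper: read off the upper bound $\phi\le ct$ from the representation formula \eqref{rep-phi} using $h\le c$, and get the lower bound by parking the trajectory at $r_0$ (admissible since $r_0>n-1$) and spending a bounded steering time $T_r$ to reach the endpoint. The only differences are cosmetic — the paper uses the constant-speed curve $\gamma'=(r_0-(n-1))/r_0$ outward and the ODE $\xi'=-1-(n-1)/\xi$ inward, while you use $\eta'=1-(n-1)/\eta$ and the neutral flow $\eta'=-(n-1)/\eta$ — and your explicit $T_M$ and the USC argument at $x=0$ just make the local-uniformity claim a bit more careful than the paper's terse treatment.
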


\begin{proof}
We only need to prove that $\phi(\cdot,t)/t\to c$ as $t\to\infty$ locally uniformly in $[0,\infty)$. 
It is clear from the representation formula that $\phi(r,t) \leq ct$ for $(r,t) \in (0,\infty) \times (0,\infty)$,
which yields that
\[
\limsup_{t\to \infty} \frac{\phi(r,t)}{t} \leq c \quad \text{uniformly for} \ r \in (0,\infty).
\]
We now need to obtain the lower bound. Fix $r \in (0,\infty)$ and consider two cases.

{\it Case 1: $r>r_0$.} Set $T_1:=\frac{r_0(r-r_0)}{r_0-(n-1)}$. For $t>T_1$, consider the curve $\gam:[0,t] \to (0,\infty)$ as 
\[
\gam(s):=
\begin{cases}
r_0 \quad &\text{for} \ 0<s<t-T_1,\\
r_0+(s-t+T_1)\frac{r_0-(n-1)}{r_0} \quad &\text{for} \ t-T_1<s<t.
\end{cases}
\]
One can check that $\gam$ is admissible in formula \eqref{rep-phi} and hence
\[
\phi(r,t) \geq \int_0^t h(\gam(s))\,ds \geq
\int_0^{t-T_1} h(\gam(s))\,ds \geq c(t-T_1),
\]
as $h$ is nonnegative, which is sufficient to get the conclusion.

{\it Case 2: $0 < r \leq r_0$.} 
We first consider the following ODE
\[
\begin{cases}
\xi'(s)=-1-\frac{n-1}{\xi(s)} \quad &\text{for}\ s>0,\\
\xi(0)=r_0.
\end{cases}
\]
Take $T_2>0$ to be the smallest value such that $\xi(T_2)=r$. It is immediate that $T_2 \leq r_0$.
For $t>T_2$, consider  $\gam:[0,t] \to (0,\infty)$ as 
\[
\gam(s):=
\begin{cases}
r_0 \quad &\text{for} \ 0\le s<t-T_2,\\
\xi(s-t+T_2) \quad &\text{for} \ t-T_2<s\le t.
\end{cases}
\]
Clearly, $\gam$ is admissible in formula \eqref{rep-phi} and 
\[
\phi(r,t) \geq \int_0^t h(\gam(s))\,ds \geq c(t-T).
\qedhere
\]
\end{proof}
Based on the above proposition and its proof, we have the following general result. 
\begin{thm}\label{thm:h1}
Set
\[
c_1:=\max_{r\in [n-1,\infty)} h(r) \quad \text{and} \quad c_2=\sup_{r \in (n-1,\infty)} h(r).
\]
Then,
\begin{align*}
&\lim_{t\to \infty} \frac{u(x,t)}{t}=c_1 \quad \text{uniformly for} \ x \in \ol{B}(0,n-1),\\
&\lim_{t\to \infty} \frac{u(x,t)}{t}=c_2 \quad \text{locally uniformly for} \ x \in \R^n\setminus\ol{B}(0,n-1).
\end{align*}
\end{thm}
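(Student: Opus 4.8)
The plan is to deduce the result from the representation formula \eqref{rep-phi} together with the ideas already developed in Proposition \ref{prop:h1}. Since $u(x,t)=\phi(|x|,t)$ with $\phi$ given by \eqref{rep-phi}, everything reduces to analyzing the large-time behavior of
\[
\phi(r,t)=\sup\left\{\int_0^t h(\gam(s))\,ds\,:\,\gam([0,t])\subset(0,\infty),\ \gam(t)=r,\ \left|\gam'(s)+\tfrac{n-1}{\gam(s)}\right|\le 1 \ \text{a.e.}\right\}.
\]
The heuristic is clear: along an admissible curve we can park at any radius $r_*>n-1$ forever (since $|0+\tfrac{n-1}{r_*}|<1$ there), so the trajectory that spends almost all of $[0,t]$ near a maximizer of $h$ on $(n-1,\infty)$ harvests the integrand at a rate approaching $c_2$; whereas radii $r_*\le n-1$ are \emph{not} stationary (the curvature drift forces $\gam'\le -\tfrac{n-1}{r_*}+1\le 0$), so one cannot loiter there, but one can still approach such an $r$ from a stationary radius in finite time.

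First I would prove the two \emph{upper bounds}. For $x\in\R^n\setminus\ol B(0,n-1)$, i.e.\ $r=|x|>n-1$: any admissible curve with $\gam(t)=r$ satisfies $\int_0^t h(\gam(s))\,ds\le c_2\,t+\big(\text{contribution from times }s\text{ with }\gam(s)\le n-1\big)$. The point is that the set of such times has length bounded by a constant depending only on $r$ and $n$: once $\gam(s)\le n-1$ we have $\gam'(s)\le 0$ and in fact $\gam'(s)\le -\tfrac{n-1}{\gam(s)}+1$, while to return to $\gam(t)=r>n-1$ the curve must eventually climb back up at rate at most $1$; a simple bookkeeping argument (or: the curve can visit $\{r\le n-1\}$ for total time at most $O(r)$) shows $\int_0^t h(\gam)\le c_2 t + C(r)$. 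Dividing by $t$ and letting $t\to\infty$ gives $\limsup \phi(r,t)/t\le c_2$. For $x\in\ol B(0,n-1)$ one uses $c_1\ge c_2$ and the cruder bound $\int_0^t h(\gam)\le c_1 t$ whenever — hmm, this is not quite automatic since $h$ can exceed $c_1$ on $[0,n-1)$; instead I would argue that a curve ending at $r\le n-1$ spends total time at most $C$ in $[0,n-1)$ as well (same drift argument, run in reverse: the curve can be in the ``bad'' low region only for bounded time before and after being pushed down), so $\int_0^t h(\gam)\le c_1 t + C$, giving $\limsup \le c_1$ there. For the \emph{lower bounds}, I reuse the construction in Proposition \ref{prop:h1} almost verbatim: pick $r_*\in(n-1,\infty)$ with $h(r_*)$ within $\eta$ of $c_2$ (for $x$ outside the ball) or with $h(r_*)$ close to $c_1$ — noting if the max $c_1$ is attained at $r_*=n-1$ one instead picks $r_*=n-1+\delta$ and uses upper semicontinuity of $h$; park at $r_*$ for time $t-T$, then run the admissible connecting curve ($\gam'=-1-\tfrac{n-1}{\gam}$ to decrease the radius, or $\gam'=1-\tfrac{n-1}{\gam}$ to increase it) reaching $\gam(t)=r$ in a fixed time $T=T(r,r_*)$. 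Then $\phi(r,t)\ge (c_2-\eta)(t-T)$ (resp.\ $(c_1-\eta)(t-T)$ up to the $\delta,\eta$ slack), so $\liminf\phi(r,t)/t\ge c_2$ (resp.\ $\ge c_1$). Letting $\eta,\delta\to0$ closes the gap, and the bounds $T(r,r_*)$ are locally bounded in $r$, which upgrades pointwise to locally uniform convergence outside $\ol B(0,n-1)$; on the compact set $\ol B(0,n-1)$ the connecting times are uniformly bounded, yielding uniform convergence there.

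The main obstacle I anticipate is making the upper bound rigorous — specifically, pinning down the claim that an admissible curve ending in a prescribed region can spend only a \emph{uniformly bounded} amount of time in $\{r\le n-1\}$ (and, for the $\ol B(0,n-1)$ estimate, controlling the contribution of the possibly-large values of $h$ on $[0,n-1)$). The drift $\gam'(s)\le -\tfrac{n-1}{\gam(s)}+1$ is strictly negative and bounded away from $0$ on any $[\delta',n-1-\delta']$, but near $\gam=0$ the ODE is singular and near $\gam=n-1$ the drift degenerates, so one must be a little careful; I would handle this by a comparison with the solution of $\xi'=-\tfrac{n-1}{\xi}+1$, which reaches $0$ in finite time from any starting point $\le n-1$, together with the observation that to \emph{re-enter} $\{r\le n-1\}$ and later reach a point with $r>n-1$ the curve must cross the band $[n-1,n-1+1]$ upward at speed $\le 1$ only finitely often relative to the budget $t$ — in fact I expect the cleanest route is to show directly that the total sojourn time of any admissible $\gam$ in $\{r<n-1\}$ on $[0,t]$ is at most $(n-1)+1$ or a similar explicit constant, by integrating the differential inequality for $\gam$ on each excursion. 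Once that lemma is in hand, both upper bounds and the locally-uniform/uniform upgrades follow routinely from the material already in the paper.
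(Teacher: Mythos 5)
Your overall plan — analyze the optimal control formula \eqref{rep-phi} and argue ``park near a maximizer of $h$, then connect'' for the lower bound and ``curves cannot dwell long in the bad region'' for the upper bound — is the right one and matches the route the paper indicates (the paper omits the proof, saying only that it is similar to Proposition \ref{prop:h1}). However, the specific lemma you single out as the crux is false, and a second step in your sketch also misfires.

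First, the sojourn-time lemma. You propose to show that any admissible $\gam$ on $[0,t]$ spends at most a fixed constant (``$(n-1)+1$ or similar'') in $\{r<n-1\}$. This is not true: an admissible curve can linger arbitrarily long just below $n-1$. Indeed, with $\zeta:=n-1-\gam$, the admissibility constraint gives $\zeta' \ge \zeta/(n-1-\zeta)$, and since $\zeta\equiv 0$ solves the ODE $\zeta'=\zeta/(n-1-\zeta)$, the comparison from $\zeta(s_1)=0$ gives no positive growth rate; one can construct admissible curves for which the time to fall from $n-1$ to $n-1-\delta$ is of order $(n-1)\log(1/\delta)$, and more generally the crossing time from $n-1-\ep$ to $n-1-\delta$ is bounded below by $(n-1-\delta)\log(\delta/\ep)$, which blows up as $\ep\to0$. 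So the upper bound for $r\le n-1$ cannot be closed by a uniform time bound. What saves the estimate is instead the upper semicontinuity of $h$ at $n-1$: for each fixed $\delta>0$, the change of variables $\rho=\gam(s)$ together with $\gam'\le -(n-1-\rho)/\rho$ does give a finite bound $C(\delta,r)$ on the time spent in $\{\gam\le n-1-\delta\}$ (this is where the integral $\int_r^{n-1-\delta}\rho\,d\rho/(n-1-\rho)$ enters), while in the remaining region $\gam\in(n-1-\delta,n-1]$ the integrand satisfies $h(\gam)\le M_\delta:=\sup_{[n-1-\delta,\,n-1]}h$, and $M_\delta\downarrow h(n-1)\le c_1$ as $\delta\to0$ precisely because $h$ is upper semicontinuous. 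Splitting $[0,t]$ into these three pieces gives $\phi(r,t)\le\max(c_1,M_\delta)\,t+\|h\|_\infty C(\delta,r)$, and letting $t\to\infty$ then $\delta\to 0$ yields $\limsup_t\phi(r,t)/t\le c_1$. Your sketch never invokes the upper semicontinuity of $h$, and without it the upper bound is not obtainable; it is the essential ingredient, not a uniform time bound.

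Second, the ``climb back up'' reasoning for $r>n-1$ is backwards. If $\gam(s_0)\le n-1$ then $\gam$ can never again exceed $n-1$: writing $f(s)=\gam(s)-(n-1)$, the constraint gives $f'\le f/\gam\le f/(n-1)$ where $f\ge0$, so Gronwall from $f(s_0)\le 0$ forces $f\le0$ for all $s\ge s_0$. Hence a curve ending at $r>n-1$ never enters $\{\gam\le n-1\}$, and the upper bound $\phi(r,t)\le c_2 t$ is immediate with no bookkeeping at all. Your argument envisions the curve visiting the low region and climbing back, which is impossible.

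Third, the workaround for the lower bound when $c_1$ is attained only at $r_*=n-1$ — parking at $n-1+\delta$ and ``using upper semicontinuity'' — does not work, because upper semicontinuity controls $h$ from \emph{above} near $n-1$, not from below; $h(n-1+\delta)$ can be $0$ for every $\delta>0$ even though $h(n-1)=c_1$. The correct move is simply to park at $\gam\equiv n-1$, which is admissible since $|0+(n-1)/(n-1)|=1\le 1$, and then descend to $r$ via $\gam'=-1-(n-1)/\gam$ in bounded time. These repairs fit your framework, but the specific lemma you flagged as ``the main obstacle'' is a dead end, and the omission of upper semicontinuity is a genuine gap.
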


The proof of this theorem is similar to that of Proposition \ref{prop:h1} hence omitted.
It is important noticing that the large time average result of this theorem 
covers that of all the cases in Propositions \ref{prop:case1}, \ref{prop:case2}, and \ref{prop:case3}.
It however does not give explicit/precise formulas of the maximal solution as in the mentioned propositions.

\begin{rem}\label{rem:general}
(i) One can easily generalize the surface evolution part to $V=v_\infty(\rho_c\kap+1)$, 
where $v_\infty, \rho_c>0$ are given constants. 
Indeed, after rescaling the values $x, t$, 
we can reduce the problem to the case where $V=\kap+1$. 
(ii) One can also deal with a general initial data which is bounded and uniformly 
continuous on $\R^n$ instead of the constant initial data. 
(iii) Theorem \ref{thm:h1} is surprising as even though the source term is very thin, 
it very essentially affects the growth of the crystal in a long time.  
More precisely, 
if we consider $f(x)=c\mathbf{1}_{\pl B(0,R_0)}$ for $R_0>n-1$ fixed, then we have 
\[
\lim_{t\to \infty} \frac{u(x,t)}{t}=c \quad \text{locally uniformly for} \ x \in \R^n.
\]
\end{rem}


\section{A framework to get estimates of growth rate}\label{sec:rough-est}
In a nonspherically symmetric case, it seems hard at this moment to obtain 
the precise large time average $\lim_{t\to \infty} u(\cdot,t)/t$, 
where $u$ is the maximal viscosity solution of (C). 
We will point out why so in Section \ref{sec:conclude}. 
Therefore, we here start to build up a framework to 
obtain rough estimates, namely 
the estimates on $\limsup_{t\to\infty}u(\cdot,t)/t$ and $\liminf_{t\to\infty}u(\cdot,t)/t$ first.

We first try to understand the behavior of the top and bottom of a solution $u$ to 
(C) as it should give an information of the behavior of the height of $u$.

\subsection{Motion of the top and the bottom of solutions} 
Let $v$ be a viscosity subsolution of (C). By the comparison principle, we have 
$v^\ast(x,t)\leq ct$ in $\mathbb{R}^n\times[0,\infty)$. For $t \geq 0$, set
\begin{equation}\label{set:Amax}
A_{\max}(t):=\left\{ x\in\mathbb{R}^n \,:\, v^\ast(x,t)=ct \right\}, 
\end{equation}
which is a compact set for $t>0$ since $v^\ast$ is upper semicontinuous and compactly supported.
\begin{lem}
Let $A_{\max}(t)$ be the set defined by \eqref{set:Amax}. 
Then $A_{\max}(t)$ is a set theoretic subsolution of $V=\kappa+1$, 
i.e., $h(x,t):=\mathbf{1}_{A_{\max}(t)}(x)$ is 
a viscosity subsolution of \eqref{eq:2} with $f=0$ 
{\rm (see \cite[Definition 5.1.1]{G} for details)}.
Moreover, $A_{\max}(t) \subset E$ for all $t\in(0,\infty)$. 
\end{lem}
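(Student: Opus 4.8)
The plan is to verify the two assertions separately. For the claim that $A_{\max}(t)$ is a set-theoretic subsolution of $V = \kappa+1$, I would argue directly from the definition: I need to show that $h(x,t) := \mathbf{1}_{A_{\max}(t)}(x)$ is a viscosity subsolution of \eqref{eq:2} with $f = 0$. Fix $(x_0,t_0) \in \R^n \times (0,\infty)$ and a test function $\phi \in C^2$ touching $h^\ast$ from above at $(x_0,t_0)$; since $h$ takes only the values $0$ and $1$, the only nontrivial case is $x_0 \in \partial A_{\max}(t_0)$ (interior points of $\{h=1\}$ and of $\{h=0\}$ are trivial, and one uses $h^\ast(x,t)=1$ for $x$ on the boundary). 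The key mechanism is that along $A_{\max}$ the subsolution $v$ of (C) attains its maximal allowed value $ct$, so $v^\ast(x,t) - ct$ has a maximum (equal to $0$) at exactly the points of $A_{\max}(t)$. I would use this to transfer the subsolution information of $v$ for (C) into subsolution information of $h$ for the geometric flow. Concretely, if $\phi$ touches $h^\ast$ from above at a boundary point $x_0$, one builds from $\phi$ (the standard construction, e.g. $\psi(x,t) := ct + \lambda(\phi(x,t)-1)$ for large $\lambda>0$, adjusted so that it touches $v^\ast$ from above at $(x_0,t_0)$) a test function for $v$, applies the subsolution inequality $\tau + H_\ast(p,X) \le f^\ast(x_0)$ for (C) — where the source term $c\mathbf{1}_E$ at a point of $A_{\max}(t_0) \subset E$ contributes exactly $c$ — and reads off the desired inequality $\phi_t + H_\ast(D\phi,D^2\phi) \le 0$ after the $\lambda\to\infty$ scaling kills the zeroth-order discrepancy. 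This is precisely the argument in \cite[Chapter 5]{G} relating the level-set equation to the set-theoretic flow, adapted to the forced equation, so I would cite \cite[Definition 5.1.1]{G} and carry out the scaling computation.

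For the inclusion $A_{\max}(t) \subset E$ for all $t \in (0,\infty)$, I would argue by contradiction or, more cleanly, directly via the weak comparison principle (Proposition \ref{prop:comparison}). Suppose $x_0 \notin E$; I want to show $v^\ast(x_0,t) < ct$. Since $E$ is compact, there is an open neighborhood $\Omega$ of $x_0$ with $\ol\Omega \cap E = \emptyset$, so the source term $c\mathbf{1}_E$ vanishes on $\Omega$, and on $\Omega$ the subsolution $v$ satisfies the driven mean curvature flow equation with zero right-hand side. The idea is to compare $v$ on a parabolic cylinder over $\Omega$ with a suitable supersolution of the equation with $f=0$ that strictly dominates $ct$ only on the parabolic boundary (where we only know $v^\ast \le ct$) but is strictly less than $ct$ at the center — for instance, a radially symmetric barrier of the type constructed in Sections \ref{subsec:subcritical}–\ref{subsec:supercritical} (a sub-$(n-1)$ ball shrinks and the corresponding profile stays strictly below $ct$) or more simply the bounded stationary-in-time profile coming from a small ball, translated so its support sits around $x_0$. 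Since $v^\ast \le ct$ everywhere and the source term is zero near $x_0$, the comparison principle forces $v^\ast(x_0,t)$ to be strictly below $ct$, hence $x_0 \notin A_{\max}(t)$.

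I expect the main obstacle to be the first part, namely making the transfer from the (C)-subsolution inequality to the geometric subsolution inequality fully rigorous at the boundary points of $A_{\max}(t_0)$: one must handle the case $D\phi(x_0,t_0) = 0$ (where $H_\ast$ and $H^\ast$ differ) and must be careful that the upper-semicontinuous envelope $h^\ast$ really equals $1$ on $\partial A_{\max}(t_0)$, which uses upper semicontinuity and compact support of $v^\ast$ together with the fact that $A_{\max}(t_0)$ is closed. The singularity of $H$ at $p=0$ is exactly the reason the set-theoretic (level-set) formulation is used, and the standard trick (allowing the pinching to be dominated by $|X|$ times a modulus, or invoking the definition of set-theoretic subsolution which is designed to absorb this) resolves it; I would lean on \cite[Chapter 5]{G} for these technicalities rather than reprove them. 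The inclusion $A_{\max}(t)\subset E$, by contrast, is a soft consequence of comparison and should be short once the right barrier is named.
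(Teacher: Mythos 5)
Your proposal splits the lemma correctly and the high-level intuition is right, but both halves depart from the paper's route, and the second half has a genuine problem.

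For the set-theoretic subsolution claim, the paper's proof is one observation plus a citation: since $v^*\le ct$ and $c\mathbf{1}_E\le c$, the function $v_c:=v-ct$ is a nonpositive viscosity subsolution of \eqref{eq:2} with $f\equiv 0$, and $A_{\max}(t)$ is exactly its zero (i.e.\ maximum) level set; then \cite[Theorem~5.1.6]{G} says directly that the zero-superlevel set of such a subsolution is a set-theoretic subsolution of $V=\kappa+1$. You instead propose to reprove the underlying level-set theorem by scaling the test function, $\psi(x,t)=ct+\lambda(\phi(x,t)-1)$. Beyond being more work than citing the theorem, this specific construction is not obviously sound: for $(x,t)$ just outside $A_{\max}(t)$ one can have $\phi(x,t)<1$ while $v^*(x,t)$ is arbitrarily close to $ct$ (recall $v^*$ is only USC), so for large $\lambda$ the inequality $\psi\ge v^*$ may fail in every neighborhood of $(x_0,t_0)$; i.e.\ $\psi$ need not touch $v^*$ from above. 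The standard Giga argument scales the \emph{subsolution} $v_c$ by a nondecreasing cutoff $\theta$ and passes to the limit by stability, not the test function, and this is what the paper leans on.

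For the inclusion $A_{\max}(t)\subset E$, the paper's argument is essentially one line and you should match it: if $x_0\in A_{\max}(t_0)\setminus E$, then $\varphi(x,t):=ct$ is an admissible test function touching $v^*$ from above at $(x_0,t_0)$ (since $v^*\le ct$ globally and $v^*(x_0,t_0)=ct_0$), and the subsolution inequality gives $c=\varphi_t+H_*(0,0)\le (c\mathbf{1}_E)^*(x_0)=0$ because $E$ is closed and $x_0\notin E$, a contradiction. Your barrier-and-comparison route is not just heavier, it does not work with the barriers you name: a bounded static profile such as $\psi(|x-x_0|)$ from Section~4.1 cannot dominate $v^*$ on the lateral parabolic boundary for all times, since $v^*$ there may be as large as $ct$, which is unbounded. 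Making a comparison argument work would require a time-dependent supersolution growing like $ct$ on $\partial\Omega$ while staying strictly below $ct$ at $x_0$ — a nontrivial construction you have not supplied — whereas the direct test with $\varphi=ct$ avoids all of this.
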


\begin{proof}
We first notice that $v_c(x,t):=v(x,t)-ct$ is a viscosity 
subsolution of \eqref{eq:2} with 
the right hand side $f \equiv 0$,  
and $v_c\leq 0$ in $\mathbb{R}^n\times[0,\infty)$. Moreover, 
$A_{\max}(t)=\left\{ x\in\mathbb{R}^n\,:\, v_c^*(x,t)=0 \right\}$.
Thus, it is clear to see that $A_{\max}(t)$ is a set theoretic subsolution 
of $V=\kappa+1$ in view of \cite[Theorem 5.1.6]{G}.

We next prove that $A_{\max}(t) \subset E$ for all $t\in(0,\infty)$. 
Suppose otherwise that there would exist $x_0\in A_{\max}(t_0)\cap E^c$ for some $t_0>0$. 
Then $\varphi(x,t):=ct$ is a test function of $v^\ast$ from above. 
This is a contradiction as 
\[
c=\varphi_t(x_0,t_0)+H_{\ast}(D\varphi(x_0,t_0), D^2\varphi(x_0,t_0))\le c1_{E}(x_0)=0,
\] 
where $H$ is defined by \eqref{func:H}.
\end{proof}

Let $w$ be a viscosity supersolution of (C). 
By the comparison principle again, we have $w_*(x,t)\geq 0$ in $\mathbb{R}^n\times[0,\infty)$.
We set 
\begin{equation}\label{set:Amin}
A_{\min}(t):=\R^n \setminus \left\{x\in\mathbb{R}^n \,:\,w_*(x,t)=0\right\}=\{x\in \R^n\,:\,w_*(x,t)>0\}.
\end{equation}
\begin{lem}
Let $A_{\min}(t)$ be the set defined by \eqref{set:Amin}. 
Then $A_{\min}(t)$ is a set theoretic supersolution of $V=\kappa+1$, 
i.e., $h(x,t):=\mathbf{1}_{A_{\min}(t)}(x)$ is 
a viscosity supersolution of \eqref{eq:2} with $f=0$.
Moreover, ${\rm int} E \subset A_{\min}(t)$ for all $t\in(0,\infty)$. 
\end{lem}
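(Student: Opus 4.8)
The statement is the natural mirror of the previous lemma about $A_{\max}$, so I would organize the proof along the same two lines. First, to see that $A_{\min}(t)$ is a set-theoretic supersolution of $V=\kappa+1$: observe that since $f=c\mathbf{1}_E \geq 0$, the supersolution $w$ of (C) is in particular a supersolution of \eqref{eq:2} with right-hand side $f\equiv 0$, i.e.\ of the pure forced mean curvature flow level-set equation. Also $w_\ast \geq 0$ everywhere by the weak comparison principle (Proposition \ref{prop:comparison}) applied against the trivial supersolution $0$ — actually against the subsolution built in \cite[Lemma 4.3.4]{G} as in the proof of Theorem \ref{thm:existence}. The set $A_{\min}(t)=\{w_\ast(\cdot,t)>0\}$ is the complement of the zero-superlevel-set $\{w_\ast(\cdot,t)=0\}$ of the nonnegative supersolution $w_\ast$, so by the standard correspondence between level sets of super/subsolutions and set-theoretic sub/supersolutions (\cite[Theorem 5.1.6]{G}, in the supersolution form), $\mathbf{1}_{A_{\min}(t)}$ is a viscosity supersolution of \eqref{eq:2} with $f=0$. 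This is essentially a citation-level step, exactly parallel to the $A_{\max}$ argument but with the roles of sub/super and open/closed reversed.

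**The inclusion $\inter E \subset A_{\min}(t)$.** For the second assertion I would argue by contradiction: suppose $x_0 \in \inter E$ but $w_\ast(x_0,t_0)=0$ for some $t_0>0$. Since $w_\ast \geq 0$, the function $w_\ast$ has a (global, hence local) minimum $0$ at $(x_0,t_0)$, so the constant test function $\varphi(x,t)\equiv 0$ touches $w_\ast$ from below there, giving $(p,X,\tau)=(0,0,0)\in J^- w_\ast(x_0,t_0)$. The supersolution inequality for (C) then requires
\[
0 = \tau + H^\ast(0,0) \geq c\,\mathbf{1}_E(x_0) = c,
\]
using $H^\ast(0,0)=0$ (as in the proof of Proposition \ref{prop:comparison}) and $\mathbf{1}_E(x_0)=1$ because $x_0\in\inter E\subset E$; this contradicts $c>0$. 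One small point to handle cleanly: the semijet condition should be verified for the lower semicontinuous envelope $w_\ast$, and since $w_\ast(x_0,t_0)=0=\min w_\ast$, the zero jet is genuinely admissible regardless of behavior of $w$ nearby. A subtlety worth a remark is whether $(0,0,0)$ lying in the closure-type semijet is enough; but because $0$ is a strict-from-below (in fact global) minimum value, the standard jet $J^-$ already contains it, so no closure argument is needed.

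**Expected main obstacle.** Neither step involves a hard estimate; the work is in invoking the set-theoretic sub/supersolution machinery of \cite[Chapter 5]{G} with the correct orientation. The one place demanding care is the first part: one must check that the passage from "$w$ is a supersolution of (C)" to "$\mathbf{1}_{A_{\min}(t)}$ is a set-theoretic supersolution of $V=\kappa+1$" is legitimate despite $w$ possibly being discontinuous and only compactly supported — i.e.\ that \cite[Theorem 5.1.6]{G} (or its supersolution analogue) applies to $w_\ast$ with $f\equiv 0$, and that dropping the nonnegative source term preserves the supersolution property. That monotonicity in the source term is immediate from the definition of viscosity supersolution (replacing $f_\ast=c\mathbf{1}_E{}_\ast \geq 0$ by $0$ only weakens the right-hand side), so I do not anticipate a genuine difficulty, only bookkeeping. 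I would therefore keep the write-up terse, mirroring the preceding $A_{\max}$ proof almost verbatim with the dualities applied.
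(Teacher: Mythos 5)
Your argument is correct and mirrors the paper's proof essentially verbatim: the paper likewise treats the set-theoretic supersolution claim as a direct citation of \cite[Theorem 5.1.6]{G} (stated in parallel to the $A_{\max}$ case) and proves the inclusion $\inter E \subset A_{\min}(t)$ by contradiction with the constant test function $\varphi\equiv 0$, arriving at $0 = \varphi_t + H^\ast(0,0)\ge c\mathbf{1}_E(x_0) = c$. If anything, your write-up is slightly more careful than the paper's, which writes ``$x_0\in E\cap A_{\min}^c(t_0)$'' whereas the lower envelope $(c\mathbf{1}_E)_\ast$ equals $c$ only on $\inter E$; you correctly restrict to $x_0\in\inter E$ from the start.
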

\begin{proof}
We only prove ${\rm int} E \subset A_{\min}(t)$ for all $t\in(0,\infty)$. 
Suppose otherwise that there would exist $x_0\in E\cap A_{\min}^c(t_0)$ for some $t_0>0$. 
Then, $w(x_0,t_0)=0$ which implies $\varphi(x,t)\equiv 0$ is a test function of $w_\ast$ from below. 
This is a contradiction as 
\[
0=\varphi_t(x_0,t_0)+H^{\ast}(D\varphi(x_0,t_0), D^2\varphi(x_0,t_0))\ge c1_{E}(x_0)=c.
\qedhere 
\] \end{proof}

In this manner, $A_{\max}(t)$ and $A_{\min}(t)$ are a set theoretic subsolution and supersolution, 
respectively, of obstacle problems of $V=\kappa+1$ with 
\[
A_{\max}(t)\subset E, \ \text{and} \  
\inter E\subset A_{\min}(t)
\quad\text{for all} \ t\ge0. 
\]
We give here a level set formulation for later use. See \cite[Chapter 5]{G} for more details. 
The functions $\ol{h}(x,t):=\mathbf{1}_{A_{\max}(t)}(x), 
\ul{h}(x,t):=\mathbf{1}_{A_{\min}(t)}(x)$ are, respectively, a subsolution and a supersolution to 
\begin{align*}
&
\max\left\{
v_t-\left(\Div\left(\frac{Dv}{|Dv|}\right)+1\right)|Dv|, 
v-\mathbf{1}_{E}(x)\right\}=0
\quad \text{in} \ \R^n\times(0,\infty), \\ 
&
\min\left\{
v_t-\left(\Div\left(\frac{Dv}{|Dv|}\right)+1\right)|Dv|, 
v-\mathbf{1}_{\inter E}(x)\right\}=0
\quad \text{in} \ \R^n\times(0,\infty).
\end{align*}
We refer the readers to \cite{M, MT} for some of related works concerning asymptotic behavior of solutions of obstacle problems.  
Let us emphasize that even though \cite{MT} studies the large time behavior of obstacle problems for Hamilton-Jacobi equations with 
possibly degenerate diffusion $\tr(A(x)D^2u)$, our problem here is not included since the degeneracy of the diffusion depends on the gradient of the solution.

\subsection{Upper and lower estimates}
From the heuristic observation by the Trotter-Kato approximation, we realize that 
the motion of the top \eqref{set:Amax} and the bottom \eqref{set:Amin} can be described by the obstacle problem of the surface evolution equation.

For a closed set $A\subset\R^n$, we denote by $\cF^-[A](t)$ (resp., $\cF^+[A](t)$) 
the solution of the front propagation of the obstacle problem 
\[V=\kappa+1\quad \textrm{with obstacle} \ A, \textrm{i.e.,} \ 
\cF^-[A](t)\subset A \quad
{\rm(resp.,} \ \inter A  \subset \cF^+[A](t){\rm)}
\] 
for any $t\ge0$, and $\cF^\pm[A](0)=A$. 
We introduce two following geometric assumptions:
\begin{itemize}
\item[{\rm(G1)}]
there exist an open set $D \supset E$ and $t_0>0$ such that $\cF^-[\ol{D}](t_0)=\emptyset$,

\item[{\rm(G2)}]
$\cF^+[E](t) \to \R^n$ as $t \to \infty$,
\end{itemize}
where $E$ is the given set from the source.

It is worthwhile emphasizing here that 
for each $E\subset\R^n$ it is highly nontrivial to check whether 
(G1) and (G2) hold or not. 
This is a purely geometric problem which we have to investigate independently. 
In this subsection, we assume (G1) and (G2) first, and 
study how it gives an affect to the height of the solution to 
\eqref{eq:1}.
In the next section, we will discuss more on (G1) and (G2).


Recall that for any $T>0$, $u \in \USC(\R^n \times [0,T])$ and there exists $R_T>0$ such that 
\[
u(x,t)=0 \quad \text{for all}\ (x,t) \in (\R^n \setminus B(0,R_T)) \times [0,T].
\]
Thus, $x\mapsto u(x,T)$ attains its maximum at some point $x_0 \in B(0,R_T)$ and furthermore, in light of the subsolution built in \eqref{func:u-case1},
\[
u(x_0,T)=\max_{\R^n} u(\cdot,T)>0.
\]

\begin{lem}[Upper estimate]\label{lem:upper2} 
Assume {\rm(G1)} holds, and let $t_0$ be given by {\rm(G1)}. 
There exists $b\in(0,c)$ such that $\max_{x\in\R^n} u(x,t_0)\leq bt_0$. 
\end{lem}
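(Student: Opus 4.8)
The plan is to run the top-level-set analysis of the previous subsection directly on the maximal solution $u$, and to deduce from (G1) that $A_{\max}(t_0)=\emptyset$; once this is known, the estimate follows immediately. Since $u$ is in particular a subsolution of (C) and $u=u^\ast$ (as $u$ is upper semicontinuous), the set $A_{\max}(t)=\{x\in\R^n:u(x,t)=ct\}$ is, for each $t>0$, a compact subset of $E$, and $\mathbf{1}_{A_{\max}(t)}(\cdot)$ is a set-theoretic subsolution of $V=\kappa+1$, as established above.

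Next I would compare $A_{\max}(\cdot)$ with $\cF^-[\ol D](\cdot)$. Because $E\subset D\subset\ol D$, we have $A_{\max}(t)\subset\ol D$ for every $t>0$, and enlarging the obstacle from $E$ to $\ol D$ only relaxes the constraint, so $\mathbf{1}_{A_{\max}(t)}$ is also a subsolution of the obstacle problem
\[
\max\left\{v_t+H(Dv,D^2v),\,v-\mathbf{1}_{\ol D}(x)\right\}=0 \quad\text{in}\ \R^n\times(0,\infty).
\]
By the comparison principle for this obstacle problem (see \cite[Chapter 5]{G}) one gets $A_{\max}(t)\subset\cF^-[\ol D](t)$ for all $t>0$, hence, using (G1),
\[
A_{\max}(t_0)\subset\cF^-[\ol D](t_0)=\emptyset .
\]
The delicate point here --- and the main obstacle --- is the initial layer: literally $A_{\max}(0)=\R^n$, since $u(\cdot,0)\equiv0$, so comparison cannot be applied naively on $[0,\infty)$, and a crude time-shift by $\delta>0$ would only give $A_{\max}(t)=\emptyset$ for $t>t_0$, not at $t=t_0$. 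The remedy is the inclusion $A_{\max}(t)\subset E$ for all $t>0$ together with the closedness of $E$: the upper relaxed limit of $\mathbf{1}_{A_{\max}(\cdot)}$ as $t\downarrow0$ is then dominated by $\mathbf{1}_E\le\mathbf{1}_{\ol D}=\mathbf{1}_{\cF^-[\ol D](0)}$, which is the correct initial datum and yields the inclusion up to and including $t=t_0$.

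Finally, I would turn $A_{\max}(t_0)=\emptyset$ into the quantitative bound. This equality means $u(x,t_0)<ct_0$ for every $x\in\R^n$. Since $u(\cdot,t_0)$ is upper semicontinuous and compactly supported, it attains its maximum $M:=\max_{x\in\R^n}u(x,t_0)$, so $M<ct_0$; and $M>0$ by comparison with the strictly positive subsolution of (C) built in \eqref{func:u-case1}, as already noted before the statement. Thus $b:=M/t_0$ satisfies $b\in(0,c)$ and $\max_{x\in\R^n}u(x,t_0)=bt_0$, which completes the proof.
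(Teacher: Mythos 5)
Your proof is correct and follows essentially the same route as the paper's, which is a two-line argument that simply asserts $A_{\max}(t_0)=\emptyset$ and then takes $b:=\max_{x}u(x,t_0)/t_0$. What you add, and what the paper leaves entirely implicit, is the justification of that assertion: the comparison of $\mathbf{1}_{A_{\max}(\cdot)}$ with the obstacle solution $\cF^-[\ol D](\cdot)$ after enlarging the obstacle from $E$ to $\ol D$, and --- most importantly --- the treatment of the initial layer, where $A_{\max}(0)=\R^n$ but $A_{\max}(0^+)\subset E$; your observation that the upper relaxed limit of $\mathbf{1}_{A_{\max}(t)}$ as $t\downarrow 0$ is dominated by $\mathbf{1}_E\le\mathbf{1}_{\ol D}$ (using that $E$ is closed) is exactly the point needed to get the inclusion up to and including $t=t_0$ rather than only for $t>t_0$. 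So this is not a different approach but a complete version of the paper's sketch.
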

\begin{proof}
Since $A_{\max}(t_0)=\emptyset$,
we have $\max_{x \in \R^n} u(x,t_0) < ct_0$. 
We set 
\[
b:=\max_{x\in \R^n} \frac{u(x,t_0)}{t_0}
\]
 to get the desired result.
\end{proof}


\begin{thm}[Global upper estimate]\label{thm:upper-esti}
Assume {\rm(G1)} holds, and let $t_0$ be given by {\rm(G1)}. 
There exists $b\in(0,c)$ such that 
\[
u(x,t) \leq bt + (c-b)t_0 \quad
\text{for all} \ (x,t) \in\mathbb{R}^n \times (0,\infty). 
\]
In particular, 
\[
\limsup_{t\to\infty}\left(\sup_{x\in \R^n} \frac{u(x,t)}{t}\right) \leq b.
\]
\end{thm}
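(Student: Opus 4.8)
The plan is to upgrade the single–time bound of Lemma~\ref{lem:upper2} to a bound valid for all $t$ by iterating over the intervals $[kt_0,(k+1)t_0]$, $k\in\N$. The mechanism is the invariance of the problem in \eqref{eq:1} under time translations and under the addition of a constant to $u$: the operator built from \eqref{func:H} depends only on $Du$ and $D^2u$, and the source term $c\mathbf{1}_E$ depends on neither $t$ nor $u$. Together with the maximality of $u$ (via the approximating solutions $u^k$ constructed in the proof of Theorem~\ref{thm:existence}), this will yield an almost–semigroup comparison between $u(\cdot,s+t_0)$ and $u(\cdot,s)$.

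The heart of the argument is the inequality
\[
u(\cdot,s+t_0)\le u(\cdot,s)+bt_0\qquad\text{on }\R^n,\ \text{for every }s\ge0,
\]
where $b\in(0,c)$ is the constant furnished by Lemma~\ref{lem:upper2}. To prove it, set $v(x,s):=\max\{\,u(x,s+t_0)-bt_0,\ 0\,\}$. By the invariances above, $(x,s)\mapsto u(x,s+t_0)-bt_0$ satisfies the same interior equation as $u$; the constant $0$ is readily checked to be a subsolution of \eqref{eq:1}; hence their maximum $v$ is a compactly supported viscosity subsolution of the interior equation. Since $u$ is upper semicontinuous on compact time intervals, the identity $\max_{\R^n}u(\cdot,t_0)=bt_0$ from Lemma~\ref{lem:upper2} forces $v^\ast(\cdot,0)\le0$, so $v$ is in fact a compactly supported subsolution of the full problem \eqref{eq:1}. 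Now $u$ dominates every such subsolution: as in the last step of the proof of Theorem~\ref{thm:existence}, $v^\ast$ is a subsolution of the equation with the continuous source $f^k\ge(c\mathbf{1}_E)^\ast$ and zero initial trace, so the standard comparison principle for continuous sources (see \cite{G}) gives $v^\ast\le u^k$, and letting $k\to\infty$ yields $v\le u$. Therefore $u(x,s+t_0)-bt_0\le v(x,s)\le u(x,s)$.

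The remainder is bookkeeping. On $[0,t_0]$ one has $u(x,t)\le ct=bt+(c-b)t\le bt+(c-b)t_0$, using the elementary bound $u\le ct$ (already recorded at the start of Section~\ref{sec:rough-est}) together with $t\le t_0$ and $b<c$. For $t>t_0$, write $t=mt_0+r$ with $m\in\N$, $m\ge1$, $r\in[0,t_0)$; applying the inequality above $m$ times gives $u(x,t)\le u(x,r)+mbt_0$, and the case already treated gives $u(x,r)\le br+(c-b)t_0$, whence $u(x,t)\le b(r+mt_0)+(c-b)t_0=bt+(c-b)t_0$. Dividing by $t$ and letting $t\to\infty$ gives $\limsup_{t\to\infty}\big(\sup_{x}u(x,t)/t\big)\le b$. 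The step I expect to require the most care is the domination $v\le u$: because \eqref{eq:1} has no genuine comparison principle, one must pass through the approximants $u^k$, and one must verify that the truncation $\max\{\cdot,0\}$ both preserves compact support and leaves the initial trace $\le0$ — which is exactly where the sharp equality $\max_{\R^n}u(\cdot,t_0)=bt_0$ of Lemma~\ref{lem:upper2} is needed, rather than merely $\max_{\R^n}u(\cdot,t_0)\le ct_0$.
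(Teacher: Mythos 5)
Your proof is correct, and it reaches the estimate by a genuinely different route from the paper's. The paper introduces an auxiliary maximal solution $w$ of \eqref{eq:2} with the enlarged source $c\mathbf{1}_{\ol{D}}$ (where $D\supset E$ is the open set from (G1)), derives $\max_{\R^n} w(\cdot,t_0)\le bt_0$, and then iterates the two--time comparison $u(\cdot,mt_0+t)\le w(\cdot,t)+mbt_0$. You bypass $w$ entirely and prove the semigroup-type inequality $u(\cdot,s+t_0)\le u(\cdot,s)+bt_0$ directly for every $s\ge0$, which yields $u(\cdot,mt_0+r)\le u(\cdot,r)+mbt_0$ and hence the claimed bound after the elementary estimate $u\le ct$ on $[0,t_0]$. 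Two technical points you handle carefully that the paper leaves implicit are worth noting. First, the truncation $\max\{\,\cdot\,,0\}$ is essential: without it, the shifted subsolution $u(\cdot,\cdot+t_0)-bt_0$ is not compactly supported, and Proposition~\ref{prop:comparison} does not apply. Second, one cannot compare a subsolution with source $c\mathbf{1}_E$ against a supersolution with the same source, since $(c\mathbf{1}_E)^\ast\not\le(c\mathbf{1}_E)_\ast$ on $\partial E$; you sidestep this by comparing with the approximants $u^k$, whose continuous source $f^k\ge(c\mathbf{1}_E)^\ast$ makes the weak comparison principle applicable, and then passing $k\to\infty$. The paper's introduction of $w$ with $\ol D\supset E$ serves precisely this same purpose, since $E\subset D\subset{\rm int}\,\ol D$ gives $(c\mathbf{1}_E)^\ast\le(c\mathbf{1}_{\ol D})_\ast$. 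A minor remark: you only need $\max_{\R^n}u(\cdot,t_0)\le bt_0$, not the equality from Lemma~\ref{lem:upper2}, but this does not affect the argument.
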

\begin{proof}
Let $w$ be the maximal solution to \eqref{eq:2} with $g=\mathbf{1}_{\ol{D}}$. 
In light of Lemma \ref{lem:upper2}, there exists $b\in (0,c)$ such that $\max_{x\in \R^n} w(x,t_0) \leq b t_0$.

By the comparison principle, $u\leq w$ on $\R^n\times[0,\infty)$, which yields that 
\[
\max_{x\in \R^n} u(x,t_0) \leq \max_{x\in \R^n} w(x,t_0) \leq b t_0.
\]
Using again the comparison principle and induction, we deduce that $u(x,mt_0+t) \leq w(x,t)+mbt_0$ on $\R^n\times[0,\infty)$ for all $m\in \N$.
In particular, $u(x,mt_0) \leq mbt_0$ for any $x\in\R^n$ and $m\in\N$.

For $t\in\left(mt_0,(m+1)t_0\right)$, $m\in \N$, we observe that
\begin{align*}
u(x,t) & \leq u(x,mt_0)+c(t-mt_0)
\leq bmt_0 + c(t-mt_0)\\
& = bt + (c-b)(t-mt_0)
 \leq bt + (c-b)t_0,
\end{align*}
which gives us the estimate on $u(x,t)$ and also the estimate on $\limsup$.
\end{proof}

We get a lower bound in a similar manner. For $R \geq n$ and $t>0$, set
\[
 U_R(t):=\inf\{u_*(x,t)\,:\,x\in \ol{B}(0,R)\cup E\}.
\]
\begin{lem}[Lower estimate] 
Assume {\rm(G2)} holds. 
For $R \geq n$, there exist $a_R>0$ and $t_1>0$ such that 
$U_R(t_1) \geq a_R t_1$.
\end{lem}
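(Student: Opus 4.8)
The plan is to argue in a manner dual to the proof of the upper estimate. By the preceding lemma, $A_{\min}(t)=\{x\in\R^n:u_*(x,t)>0\}$ is a set theoretic supersolution of $V=\kappa+1$ — equivalently, $\mathbf{1}_{A_{\min}(t)}$ is a viscosity supersolution of the obstacle problem $\min\{v_t-(\Div(Dv/|Dv|)+1)|Dv|,\ v-\mathbf{1}_{\inter E}\}=0$ — and $\inter E\subset A_{\min}(t)$ for every $t>0$. Since $u_*(\cdot,t_1)$ is lower semicontinuous and $\ol B(0,R)\cup E$ is compact, it suffices to exhibit one time $t_1>0$ with $\ol B(0,R)\cup E\subset A_{\min}(t_1)$: then $u_*(\cdot,t_1)$ is strictly positive on that compact set, hence attains there a positive minimum, and one may take $a_R:=U_R(t_1)/t_1>0$.

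To produce such a $t_1$, I would compare $A_{\min}$ with the obstacle front $\cF^+[E]$. Fix $\delta>0$. Since $\inter E\subset A_{\min}(\delta)$, the function $\mathbf{1}_{A_{\min}(\cdot)}$ restricted to $[\delta,\infty)$ is a supersolution of the obstacle problem above whose value at time $\delta$ dominates $\mathbf{1}_{\inter E}$; comparing it with $\mathbf{1}_{\cF^+[\inter E](\cdot-\delta)}$ (the level-set solution of that obstacle problem with initial set $\inter E$ at time $\delta$), and using that $\cF^+[\inter E](s)=\cF^+[E](s)$ for $s>0$, gives
\[
\cF^+[E](t-\delta)\subset A_{\min}(t)\qquad\text{for every }t>\delta .
\]
By (G2), $\cF^+[E](s)\to\R^n$, so — $\ol B(0,R)\cup E$ being compact — there is $s_1>0$ with $\ol B(0,R)\cup E\subset\cF^+[E](s_1)$. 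Taking $t_1:=s_1+\delta$ then yields $\ol B(0,R)\cup E\subset\cF^+[E](s_1)\subset A_{\min}(t_1)$, which is exactly what was needed. (In the cases where this lemma is applied one in fact has $E\subset\ol B(0,n)\subset\ol B(0,R)$, so $\ol B(0,R)\cup E=\ol B(0,R)$ and only $\ol B(0,R)\subset\cF^+[E](s_1)$, immediate from (G2), is required.)

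The step I expect to be the main obstacle is the inclusion $\cF^+[E](t-\delta)\subset A_{\min}(t)$. The subtle point is that the two objects cannot be compared at $t=0$: one has $A_{\min}(0)=\{u_*(\cdot,0)>0\}=\emptyset$ because $u(\cdot,0)=0$, whereas $\cF^+[E](0)=E$, reflecting that the source $c\mathbf{1}_E$ builds the first crystal layer only instantaneously, for $t>0$. Shifting the initial time to an arbitrary $\delta>0$, as above, bypasses this, after which the argument relies only on the comparison principle and the basic stability theory for the level-set formulation of the obstacle problem for $V=\kappa+1$, together with the fact that the obstacle front is insensitive to $\partial E$ at positive times — all standard and available from \cite[Chapter 5]{G}. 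Once the set inclusion is in hand the quantitative lower bound costs nothing: it follows purely from the lower semicontinuity of $u_*$ and the compactness of $\ol B(0,R)\cup E$.
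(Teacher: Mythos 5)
The paper itself does not print a proof of this lemma (nor of Theorem~\ref{thm:lower-esti}); it is left as the mirror image of Lemma~\ref{lem:upper2} and Theorem~\ref{thm:upper-esti}. Your proof is the correct dual argument: (G1) was used to force $A_{\max}(t_0)=\emptyset$ via comparison of $A_{\max}$ with $\cF^-[\ol{D}]$; here you use (G2) to force $\ol{B}(0,R)\cup E\subset A_{\min}(t_1)$ via comparison of $A_{\min}$ with $\cF^+[E]$, and then exploit lower semicontinuity of $u_*$ and compactness to extract $a_R>0$. The time-shift to $\delta>0$ to repair the incompatible initial data ($A_{\min}(0)=\emptyset$ versus $\cF^+[E](0)=E$) is exactly the right fix, and the same issue is present, and similarly unremarked, in the upper estimate (where $A_{\max}(0)=\R^n$ while $\cF^-[\ol D](0)=\ol D$). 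So you are reproducing the intended argument.

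One caveat on a point you dismiss as ``standard'': the identity $\cF^+[\inter E](s)=\cF^+[E](s)$ for $s>0$ is a non-fattening/uniqueness statement for the obstacle flow and is \emph{not} a generic consequence of the level-set framework in \cite[Chapter~5]{G}. For the examples the paper actually treats (a square, or a ball) it holds, but if you want the lemma for a general compact $E$ you should either assume it, or restate (G2) so that the comparison can be run directly: for instance, strengthen (G2) to require that $\cF^+[K](t)\to\R^n$ for some compact $K\subset\inter E$, or note that $(\mathbf{1}_{A_{\min}(\delta)})_*\geq \mathbf{1}_{\inter E}$ (true by the interior argument you implicitly use) and compare with the lower-semicontinuous level-set solution issued from $\mathbf{1}_{\inter E}$, whose positivity set is what (G2) really controls. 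None of this changes the structure of your proof; it just makes precise the one step you flag as the ``main obstacle.''
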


\begin{thm}[Global lower estimate]\label{thm:lower-esti} 
Assume {\rm(G2)} holds. 
For $R \geq n$, there exist $a_R>0$ and $t_1>0$ such that 
\[
U_R(t) \geq a_R (t - t_1) \quad\text{for all}\quad t\ge0. 
\]
Thus, 
\[
\liminf_{t\to\infty}\left(\inf_{x\in B(0,R)} \frac{u_*(x,t)}{t}\right) \geq a_R.
\]
\end{thm}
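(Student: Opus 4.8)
The plan is to promote the single--time lower bound of the Lower estimate Lemma into one valid for every $t$ by iterating in time, in the spirit of---though technically more delicate than---the proof of Theorem~\ref{thm:upper-esti}. Two features make this harder than the upper estimate: the bound furnished by the Lemma lives only on $\ol{B}(0,R)\cup E$, not on all of $\R^n$, and since only the weak comparison principle is available, two solutions of (C) cannot be compared head on.

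First I would record two facts about the maximal solution $u$, both obtained by arguing with the approximating continuous solutions $u^k$ of \eqref{eq:2} with source $f^k$---for which the classical comparison principle applies---and then letting $k\to\infty$. (a) $t\mapsto u(x,t)$ is nondecreasing: for each $u^k$ this follows from $u^k(\cdot,s)\ge 0=u^k(\cdot,0)$ and comparison; consequently $t\mapsto u_*(x,t)$, hence $t\mapsto U_R(t)$, is nondecreasing. (b) If $z$ is a compactly supported subsolution of the equation in (C) with $z^\ast(\cdot,0)\le u(\cdot,s)$ for some $s\ge0$, then $z(\cdot,t)\le u(\cdot,s+t)$ for all $t\ge0$; indeed $z$ is then a subsolution of the $f^k$-equation (as $c\mathbf{1}_E\le f^k$) and $u^k(\cdot,s+\cdot)$ solves that equation with datum $u^k(\cdot,s)\ge u(\cdot,s)\ge z^\ast(\cdot,0)$, so classical comparison gives $z\le u^k(\cdot,s+\cdot)$, and one takes the infimum over $k$.

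Next, apply the Lower estimate Lemma with radius $R+1$ to fix $a_R>0$ and $t_1>0$ with $u_*(x,t_1)\ge a_Rt_1$ on $\ol{B}(0,R+1)\cup E$, and prove by induction on $m\in\N$ that $u_*(x,mt_1)\ge ma_Rt_1$ on $\ol{B}(0,R+1)\cup E$, the case $m=1$ being the Lemma. For the step $m\to m+1$: set $\mu:=ma_Rt_1$, fix a (possibly empty) closed set $E_0\subset\inter E$, and choose $g\in C(\R^n)$ with $0\le g\le\mu\,\mathbf{1}_{\ol{B}(0,R+1)\cup E}$ and $g\equiv\mu$ on $\ol{B}(0,R)\cup E_0$; by the induction hypothesis $g\le u(\cdot,mt_1)$ everywhere. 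Let $u^{(m)}$ be the maximal solution of \eqref{eq:2} with source $c\mathbf{1}_{E_0}$ and datum $g$. Since $c\mathbf{1}_{E_0}\le c\mathbf{1}_E$, $u^{(m)}$ is a subsolution of the equation in (C) with $(u^{(m)})^\ast(\cdot,0)=g\le u(\cdot,mt_1)$, so by (b), $u^{(m)}(x,t)\le u(x,mt_1+t)$ for all $t\ge0$. It then remains to re-run the argument behind the Lower estimate Lemma for $u^{(m)}$: its datum is nonnegative and carries the floor $\mu$ on the ball $\ol{B}(0,R)$, which---$R\ge n>n-1$ being supercritical---forces the positivity set $\{u^{(m)}_*>0\}$, a set theoretic supersolution of $V=\kappa+1$ containing $\ol{B}(0,R)$, to sweep out $\R^n$; thus that argument adds a height $a_Rt_1$ on top of the floor, giving $u^{(m)}_*(x,t_1)\ge(m+1)a_Rt_1$, and hence $u_*(x,(m+1)t_1)\ge(m+1)a_Rt_1$, on $\ol{B}(0,R+1)\cup E$.

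With the induction in hand: for $t\ge t_1$ write $t\in[mt_1,(m+1)t_1)$ with $m\ge1$, so $U_R(t)\ge U_R(mt_1)\ge ma_Rt_1\ge a_R(t-t_1)$ by monotonicity of $U_R$; and $U_R(t)\ge0\ge a_R(t-t_1)$ for $t\le t_1$ since $u_*\ge0$. Dividing $U_R(t)\ge a_R(t-t_1)$ by $t$, letting $t\to\infty$, and using $\inf_{x\in B(0,R)}u_*(x,t)\ge U_R(t)$, yields the claimed $\liminf$ bound. The main obstacle is the inductive step: because the available lower bound is localized, one cannot simply add a constant and invoke comparison as in the upper estimate, and instead one must go back into the construction underlying the Lower estimate Lemma and check that it still delivers its gain $a_Rt_1$ when the datum carries a positive floor on a supercritical ball; the detour through the strictly smaller set $E_0\subset\inter E$ and the slightly smaller ball $\ol{B}(0,R)$ is forced by the weak comparison principle, which lets a solution of the $c\mathbf{1}_{E_0}$-equation sit below a solution of (C) only when $E_0\subset\inter E$. (If the subsolution produced in the proof of the Lower estimate Lemma already grows linearly in $t$ on $\ol{B}(0,R)\cup E$, the theorem instead follows in one line from $u\ge$ that subsolution by maximality, with no induction needed.)
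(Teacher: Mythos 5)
The paper supplies no proof of this theorem (nor of the Lower estimate Lemma that precedes it), so there is nothing to compare against; I can only assess the argument on its own terms. Your preliminary facts (a) and (b) are sound and useful, and your identification of the core difficulty -- that the lemma's lower bound is localized to $\ol{B}(0,R)\cup E$, so the argument for Theorem~\ref{thm:upper-esti} cannot be transplanted verbatim -- is exactly right. The floor-persistence observation is also correct: since $u^{(m)}_*-(\mu-\ep)$ is a supersolution of the homogeneous equation, its positivity set is a set-theoretic supersolution of $V=\kappa+1$, it contains $\ol{B}(0,R)$ at $t=0$, and $R\ge n>n-1$ forces that ball to persist, so $u^{(m)}_*(\cdot,t)\ge\mu$ on $\ol{B}(0,R)$ for all $t$.

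The genuine gap is the sentence asserting that ``that argument adds a height $a_Rt_1$ on top of the floor, giving $u^{(m)}_*(x,t_1)\ge(m+1)a_Rt_1$.'' This is the entire content of the inductive step and it is not justified. Floor persistence and the sweeping-out of the positivity set only give $u^{(m)}_*(\cdot,t_1)\ge\max(\mu, a'_{R}t_1)$, not $\mu+a_Rt_1$; the gain is not additive just because the datum is larger, and you cannot obtain $\mu+a_Rt_1$ by comparing $u^{(m)}$ with $u+\mu$, since $u+\mu$ has initial datum $\mu$ everywhere, which exceeds $g$ outside the floor region. What is really needed is a quantitative statement of the form: ``for any nonnegative compactly supported datum $g$ with $g\ge\mu$ on $\ol{B}(0,R)\cup E_0$, the maximal solution with source $c\mathbf{1}_{E_0}$ satisfies $u^{(m)}_*(\cdot,t_1)\ge\mu+a t_1$ on $\ol{B}(0,R+1)\cup E$,'' with $a,t_1$ depending only on $E_0$ and $R$ -- and moreover one must check that (G2) (and hence the lemma's mechanism) survives the passage from $E$ to the strictly smaller $E_0$, and that the resulting constants can be chosen uniformly in $m$. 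None of this is supplied; the phrase ``re-run the argument behind the Lower estimate Lemma'' invokes a proof that does not appear in the paper and whose robustness under nonzero initial data and a shrunken source is precisely what is at stake. You are candid that this is the weak link, and your closing parenthetical points to the likely escape: if the proof of the Lower estimate Lemma constructs a compactly supported subsolution $z$ of (C) with $z(\cdot,0)=0$ and $z(\cdot,t)\ge a_R(t-t_1)$ on $\ol{B}(0,R)\cup E$ for all $t$, then $u\ge z$ by maximality and the theorem follows with no iteration at all. Absent that construction, the proposal as written does not close.
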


\begin{rem}
By the propagation property it is not difficult to see that $a_R$ can be taken independent of $R$. 
Indeed, fix $R>n$ and $s>t_1$. For $t>0$, set 
\[
A_n(t):=\{x\in \R^n\,:\,u_*(x,t)>a_n (s-t_1)\}.
\]
Then $ B(0,n) \subset A_n(s)$.
Note furthermore that $A_n(t)$ is a set theoretic supersolution of $V=\kappa+1$.
Hence, there exists $t_2>0$ independent of $s$ such that
\[
 B(0,R) \subset A_n(s+t_2).
\]
In other words, $t_2$ is the time it takes to transform $B(0,n)$ into $B(0,R)$ under the 
forced mean curvature flow $V=\kappa+1$. We conclude that
\[
 u_*(x,t) \geq a_n(t-t_1-t_2) \quad \text{for all} \ (x,t) \in B(0,R) \times (0,\infty).
\]
Therefore, 
\[
\liminf_{t\to\infty}\left(\inf_{x\in B(0,R)} \frac{u_*(x,t)}{t}\right) \geq a_n.
\]
\end{rem}

\section{The behavior of $\cF^{\pm}[E](t)$} \label{sec:square}

In this section we investigate the precise behavior of the solution 
$\cF^\pm[E](\cdot)$ of the front propagation problems with obstacles.
We in particular consider a family of squares in $\R^2$, i.e.,   
\begin{equation}\label{E-sq}
E:=E(d)=\left\{ (x_1,x_2)\,:\, |x_i|\leq d,\ i=1,2 \right\}
 \end{equation}
for $d>0$ given as a nontrivial specific example. 
We first give a straightforward result of Proposition \ref{prop:case4}. 
\begin{prop}\label{prop:cor-square} 
The followings hold{\rm:}
\begin{itemize}
\item[(i)] If $d<1/\sqrt{2}$, then $u$ is bounded on $\R^2\times[0,\infty)$. 
In particular, $u(\cdot,t)/t\to 0$ uniformly in $\R^2$ as $t\to\infty$.  
\item[(ii)] If $d>1$, then $u(\cdot,t)/t\to c$ locally uniformly in $\R^2$ as $t\to\infty$. 
\item[(iii)] If $d=1$, then $u(\cdot,t)/t \to c$ uniformly on $\ol{B}(0,1)$ as $t\to\infty$.
\end{itemize}
\end{prop}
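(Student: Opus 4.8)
The plan is to read off all three assertions from the spherically symmetric analysis of Section~\ref{sec:sphere} (Propositions~\ref{prop:case1}, \ref{prop:case2}, \ref{prop:case3}, Lemma~\ref{lem:case3}, Corollary~\ref{prop:case4}) together with the weak comparison principle (Proposition~\ref{prop:comparison}) and the maximality of $u$ (Theorem~\ref{thm:existence}); in cases (i)--(ii) this is literally an instance of Corollary~\ref{prop:case4}. The only geometry involved is an elementary pair of facts about the square $E=E(d)=\{(x_1,x_2):|x_i|\le d\}$: its farthest point from the origin is a corner at distance $d\sqrt2$, so $E(d)\subset\ol{B}(0,d\sqrt2)$; and conversely $\ol{B}(0,\rho)\subset E(d)$ whenever $\rho\le d$, with $\ol{B}(0,\rho)\subset\inter E(d)$ whenever $\rho<d$. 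Recall that $n-1=1$ here.

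\textbf{Cases (i) and (ii).} If $d<1/\sqrt2$ then $d\sqrt2<1$, so $E(d)\subset\ol{B}(0,d\sqrt2)\subset B(0,n-1)$ and Corollary~\ref{prop:case4} already gives $u(\cdot,t)/t\to0$ uniformly. To upgrade this to boundedness of $u$, I would fix $R_0\in(d\sqrt2,n-1)$, so that $E(d)\subset B(0,R_0)$ and hence $(c\mathbf{1}_{E(d)})^\ast=c\mathbf{1}_{E(d)}\le c\mathbf{1}_{B(0,R_0)}=(c\mathbf{1}_{\ol{B}(0,R_0)})_\ast$; the weak comparison principle then yields $u\le u^{R_0}$ on $\R^2\times[0,\infty)$, where $u^{R_0}$ is the maximal solution of (C) with $E=\ol{B}(0,R_0)$, and Proposition~\ref{prop:case1} exhibits $u^{R_0}(x,t)=\min\{\psi(|x|),ct\}\le\psi(|x|)$, which is bounded. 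Since $0\le u\le u^{R_0}$, the function $u$ is bounded, which trivially forces $\sup_x u(\cdot,t)/t\to0$. If instead $d>1$, then $\ol{B}(0,n-1)\subset\inter E(d)$, and Corollary~\ref{prop:case4} gives $u(\cdot,t)/t\to c$ locally uniformly; equivalently one can pinch $u^{R_0}\le u\le ct$ for any $R_0\in(n-1,d)$ and invoke Proposition~\ref{prop:case2}.

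\textbf{Case (iii): $d=1$.} Here $\ol{B}(0,1)\subset E(1)$ but $\ol{B}(0,1)\not\subset\inter E(1)$, so Corollary~\ref{prop:case4} does not apply; instead I would pinch $u$ from both sides on $\ol{B}(0,1)$. From above, the constant-in-space function $(x,t)\mapsto ct$ is a supersolution of (C) (this is the bound recalled at the start of Section~\ref{sec:rough-est}), so $u\le ct$ on $\R^2\times[0,\infty)$. From below, Lemma~\ref{lem:case3} says that $v:=ct\mathbf{1}_{\ol{B}(0,n-1)}=ct\mathbf{1}_{\ol{B}(0,1)}$ is a subsolution of (C) in the critical ball case $E=\ol{B}(0,1)$; since $\ol{B}(0,1)\subset E(1)$ we have $c\mathbf{1}_{\ol{B}(0,1)}\le c\mathbf{1}_{E(1)}$ pointwise, hence at every test point the subsolution inequality $\tau+H_\ast(p,X)\le c\mathbf{1}_{\ol{B}(0,1)}(x_0)$ satisfied by $v$ implies $\tau+H_\ast(p,X)\le c\mathbf{1}_{E(1)}(x_0)$, i.e.\ $v$ is a subsolution of (C) with $E=E(1)$. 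As in the proof of Theorem~\ref{thm:existence}, the maximal solution dominates every compactly supported subsolution of (C), so $u\ge v=ct\mathbf{1}_{\ol{B}(0,1)}$. Combining the two bounds, $u(x,t)=ct$ for all $(x,t)\in\ol{B}(0,1)\times[0,\infty)$, whence $u(\cdot,t)/t\equiv c$ on $\ol{B}(0,1)$, which is the claim.

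I do not expect any genuine difficulty: once the spherical results of Section~\ref{sec:sphere} are available, everything reduces to comparisons with balls. The single point that needs a moment's care is the borderline case (iii), where Corollary~\ref{prop:case4} fails and one must observe that enlarging the source set preserves the subsolution property, so that the explicit critical-ball subsolution of Lemma~\ref{lem:case3} can be transplanted to the square problem and played against the maximal solution $u$.
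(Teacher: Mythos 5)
Your proof is correct, and for cases (i) and (ii) it is exactly what the paper intends: translate the square into nested balls $\ol{B}(0,d\sqrt2)\subset E(d)\subset\ol{B}(0,d\sqrt2)$ (containment or containment of interiors, as appropriate, with $n-1=1$) and invoke Corollary \ref{prop:case4}; the extra comparison with the explicit bounded function of Proposition \ref{prop:case1} correctly supplies the boundedness statement in (i), which Corollary \ref{prop:case4} as stated does not mention.

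Where your write-up adds genuine value is case (iii). The paper dismisses all three parts as ``obtained straightforwardly by using Theorem \ref{thm:main1} and the comparison principle,'' but a direct appeal to the weak comparison principle (Proposition \ref{prop:comparison}) with $f=c\mathbf{1}_{\ol{B}(0,1)}$ and $g=c\mathbf{1}_{E(1)}$ would require $f^\ast\le g_\ast$, i.e.\ $\ol{B}(0,1)\subset\inter E(1)$, which fails precisely at $d=1$ (the ball touches the sides of the square). You correctly sidestep this by observing that since the right-hand side only enters the subsolution test via an inequality, a subsolution for the smaller closed source $\ol{B}(0,1)$ is automatically a subsolution for the larger closed source $E(1)$; hence the explicit critical-ball subsolution of Lemma \ref{lem:case3} may be played against the approximating continuous-data solutions $u^k$ exactly as in the proof of Theorem \ref{thm:existence}, giving $ct\mathbf{1}_{\ol{B}(0,1)}\le u\le ct$ and therefore $u\equiv ct$ on $\ol{B}(0,1)$. (An equivalent repair, closer to the letter of the paper's argument, would be to compare with the ball solutions for $R_0\uparrow 1$ and pass to the limit in the explicit formula \eqref{func:u-case2}.) Either way the borderline case needs this extra observation, and your proposal supplies it cleanly.
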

Note that in the case $d=1$, Proposition \ref{prop:case4} does not give the large time behavior
of $u(x,t)/t$ for $x\in \R^n \setminus \ol{B}(0,1)$.
As this is a critical case, we do not know yet how to handle this situation.

We now study the case where $1/\sqrt{2} < d < 1$, which is 
delicate. 
Our goal is to verify assumptions (G1) and (G2),
which in turn gives us some knowledge on the $\liminf$
and $\limsup$ behavior of $u(\cdot,t)/t$ as $t\to \infty$ 
by using Theorems \ref{thm:upper-esti}, \ref{thm:lower-esti}.


We first consider the behavior of $\cF^{-}[E](t)$. 
More precisely, we first study the behavior of the solution to the obstacle problem 
for the graph:
\begin{equation}\label{obstacle-1}
\displaystyle 
\max\left\{ y_t - \frac{y_{xx}}{1+(y_x)^2} - (1+(y_x)^2)^{1/2}, y-g(x) \right\}=0
\quad
\text{for} \ (x,t) \in (-D,D) \times (0,\infty), 
\end{equation}
where $g(x):=-|x|$, and $D:=\sqrt{2}d$. 
We construct a viscosity supersolution $w$ of \eqref{obstacle-1} in 
$[-D,D] \times [0,T]$
for $T>0$ to be chosen with 
$w(x,0) \geq g(x)-s$ in $[-D,D]$ for some $s>0$ satisfying $d+s<1$.
Define $W:[-D,D]\to\R$ by 
\[
W(x):=
\begin{cases}
-|x| \quad &\text{for} \ 1/\sqrt{2}\le |x|\le D\\
-\sqrt{2} + (1-x^2)^{1/2} \quad &\text{for} \ |x| \leq 1/\sqrt{2}, 
\end{cases}
\]
and set
\[
w(x,t)=\lam(t) W\left(\frac{x}{\lam(t)}\right), 
\]
where $\lam:[0,\infty)\to \R$ is the solution of the following ODE 
\begin{equation}\label{ODE-lam}
\begin{cases}
\displaystyle
\lam'(t)=\frac{1}{\lam(t)}-1 \quad \text{for} \ t>0,\\
\displaystyle
\lam(0)=\frac{s}{2}.
\end{cases}
\end{equation}
Pick $T>0$ to be the first time that $\lam(T)=d+s/\sqrt{2}<1$. Clearly for $t \in (0,T)$,
\[
\lam'(t) \geq \frac{1}{d+s/\sqrt{2}}-1 = \ep_0>0,
\]
Thus $T \leq \ep_0^{-1}<\infty$.
\begin{center}
\includegraphics[width=80mm]{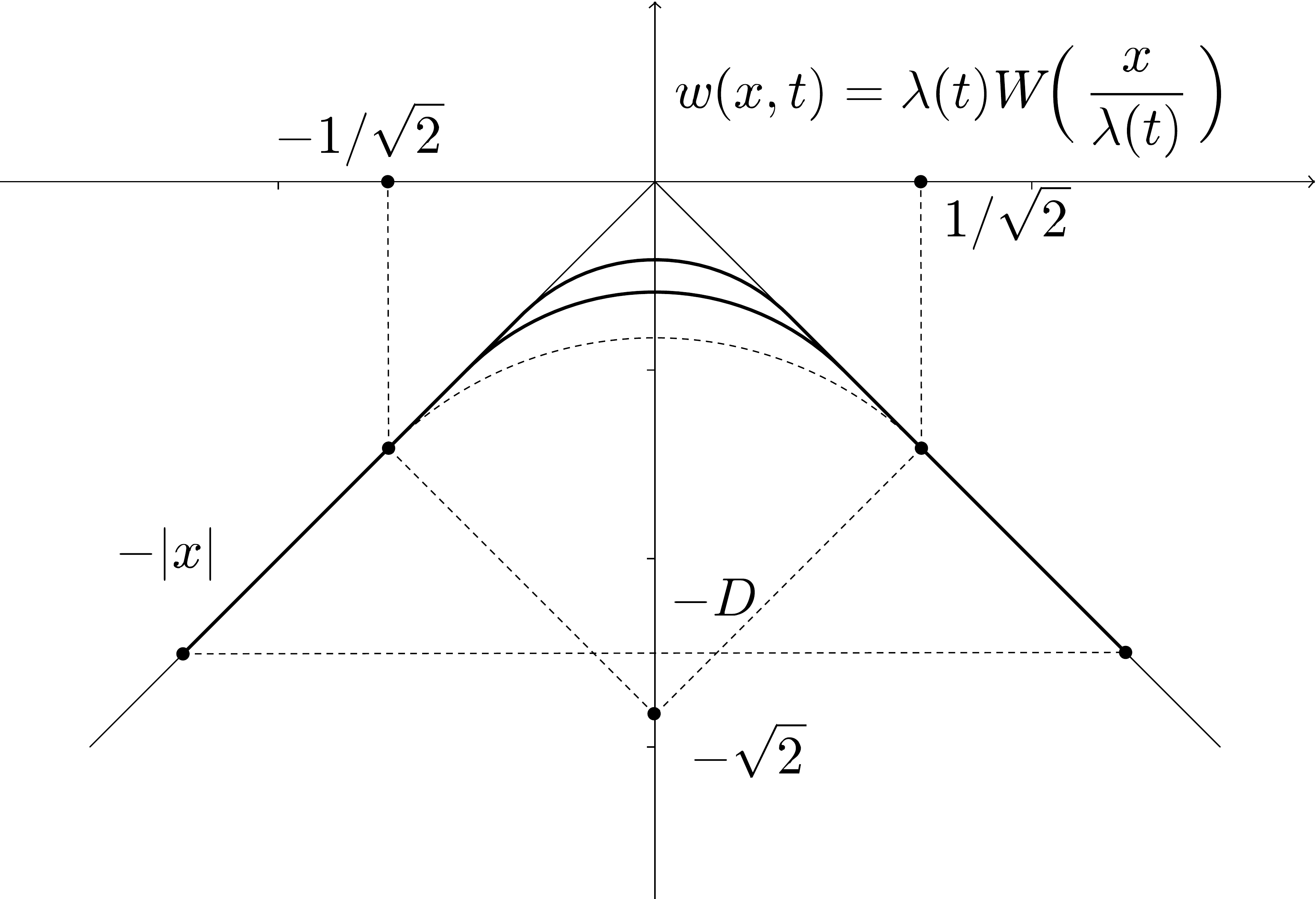}
\captionof{figure}{Graph of $w(x,t)$}
\end{center}

\begin{lem}\label{lem:G1}
The function $w$ defined above is a supersolution of \eqref{obstacle-1} in $[-D,D]\times [0,T]$ and $w(x,0) \geq g(x)-s$ in $[-D,D]$.
\end{lem}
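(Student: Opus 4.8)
The plan is to verify the two claims separately. The initial-data inequality is the easy part: at $t=0$ we have $\lam(0)=s/2$, so $w(x,0)=\frac{s}{2}W(2x/s)$, and since $W\ge -\sqrt2$ everywhere while $W(\xi)=-|\xi|$ for $|\xi|\ge 1/\sqrt2$, a direct comparison with $g(x)=-|x|$ shows $w(x,0)\ge -|x|-\frac{s}{2}\sqrt2\ge g(x)-s$ on $[-D,D]$ (using $\sqrt2/2<1$). So I would dispose of that in one or two lines.

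For the supersolution property, the first step is to check the obstacle constraint $w(x,t)\le g(x)$ on $[-D,D]\times[0,T]$. On the outer region $|x|/\lam(t)\ge 1/\sqrt2$ one has $w(x,t)=-|x|=g(x)$ by the self-similar scaling of the linear piece, so equality holds. On the inner region $|x|/\lam(t)\le 1/\sqrt2$ one must show $\lam W(x/\lam)\le -|x|$, i.e. $W(\xi)\le -|\xi|$ for $|\xi|\le 1/\sqrt2$ where $W(\xi)=-\sqrt2+(1-\xi^2)^{1/2}$; this is the elementary inequality $(1-\xi^2)^{1/2}\le \sqrt2-|\xi|$, which follows by squaring (both sides nonnegative since $\lam(T)<1$ keeps us in range, and $|\xi|\le 1/\sqrt2<\sqrt2$) and reducing to $(\sqrt2-|\xi|)^2-(1-\xi^2)=2\sqrt2|\xi|(\,|\xi|/\sqrt2-1+\cdots)$ — in fact to $2(1-\sqrt2|\xi|)\ge 0$. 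So the obstacle part reduces to a routine algebraic check.

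The second, and main, step is the differential inequality
\[
w_t-\frac{w_{xx}}{1+w_x^2}-(1+w_x^2)^{1/2}\ \ge\ 0
\]
in the viscosity sense on $[-D,D]\times[0,T]$. On the outer linear piece $w(x,t)=-|x|$ this needs to be read carefully: for $x\neq 0$ it is smooth with $w_t=0$, $w_{xx}=0$, $w_x=\mp1$, so the left side equals $-\sqrt2<0$ — but that is fine because on that piece $w=g$, so the $\max$ in \eqref{obstacle-1} is already nonnegative through the obstacle term, and I only need the evolution inequality where $w<g$, i.e. strictly inside the inner cap. At the corner $x=0$ (which lies in the inner region as long as $\lam(t)>0$) the graph of $W$ is smooth, so no issue there; the only nonsmooth locus is $|x|=\lam(t)/\sqrt2$, where $W$ is $C^1$ by construction (both pieces have slope $\mp1$ there) but $W''$ jumps — and there the relevant one-sided check is that a test function touching from below has $y_{xx}$ no larger than the inner value, which preserves the supersolution inequality (the curvature term enters with a minus sign, so an upper bound on $y_{xx}$ from the obstacle side is harmless). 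So the real computation is on the inner cap $|x|<\lam(t)/\sqrt2$: there $w(x,t)=\lam(t)W(x/\lam(t))$ with $W$ the circular arc $-\sqrt2+(1-\xi^2)^{1/2}$, and I would compute $w_x$, $w_{xx}$, $w_t$ in terms of $\lam$, $\lam'=\frac1\lam-1$, and $\xi=x/\lam$, then plug into the left-hand side. Because the graph of $W$ on the cap is (a vertical translate of) a circular arc of radius $1$, the normalized curvature term $w_{xx}/(1+w_x^2)^{3/2}$ equals $-1/\lam(t)$ (curvature of a circle of radius $\lam(t)$, with the sign coming from concavity), and the self-similar time derivative contributes $\lam'(t)\cdot(W(\xi)-\xi W'(\xi))$; the ODE $\lam'=\frac1\lam-1$ is exactly engineered so that the "$V=\kappa+1$" balance on the moving arc becomes an inequality in the supersolution direction once one accounts for the $+1$ driving term and the geometry of the dilation. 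I expect the key identity to be that $-\tfrac{w_{xx}}{1+w_x^2}-(1+w_x^2)^{1/2}$ equals, on the cap, something of the form $\big(\tfrac1\lam-1\big)\cdot(\text{positive factor})$ up to the $W-\xi W'$ term, so that substituting $\lam'$ closes the sign. This computation — getting the scaling-plus-curvature bookkeeping right and confirming the final sign is $\ge 0$ for $\lam\in(s/2,\,d+s/\sqrt2)$ — is the main obstacle; everything else is elementary. Finally I would remark that, since $w$ is continuous and the only nonsmooth set is the $C^1$ gluing curve, the pointwise verification away from it together with the one-sided test at it suffices to conclude $w$ is a viscosity supersolution of \eqref{obstacle-1} on $[-D,D]\times[0,T]$.
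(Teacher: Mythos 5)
Your proposal has the right architecture and shows a correct understanding of the obstacle problem: on the outer region $|x|\ge\lam(t)/\sqrt2$ the obstacle term $w-g=0$ makes the $\max$ automatically nonnegative (the paper says exactly ``there is nothing to check''), the $C^1$ gluing locus is handled correctly by one-sided tests, and the initial-data check is fine. However, the central step --- verifying $w_t-\frac{w_{xx}}{1+w_x^2}-(1+w_x^2)^{1/2}\ge 0$ on the inner cap --- is left undone: you describe what you \emph{expect} the computation to show, but you flag it as ``the main obstacle'' without closing it, so the lemma is not actually proved.

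Moreover, your anticipated structure does not obviously give the sign. You write that the ODE $\lam'=1/\lam-1$ makes ``the balance become an inequality'' via a product $(1/\lam-1)\cdot(\text{positive factor})$ up to the $W-\xi W'$ term. But on the cap $W(\xi)-\xi W'(\xi)=-\sqrt2+(1-\xi^2)^{-1/2}$ is \emph{negative} for $|\xi|<1/\sqrt2$, so $w_t=\lam'(W-\xi W')\le 0$ and the sign is not decided by a positive-factor argument. What actually happens (and what the paper computes) is that after substituting $1/\lam-1=\lam'$ and $(1+w_x^2)^{1/2}=(1-\xi^2)^{-1/2}$, the three terms collapse to
\[
\lam'(t)\Bigl(-\sqrt2+\tfrac{2}{(1-\xi^2)^{1/2}}\Bigr),
\]
which is $\ge 0$ because $\lam'\ge 0$ and $\tfrac{2}{(1-\xi^2)^{1/2}}\ge 2>\sqrt2$; the inequality is not a balance but a strict surplus driven by $2>\sqrt2$. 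Until you carry that algebra through, the sign is not established. (A smaller issue: your expansion showing $W(\xi)\le -|\xi|$ should reduce to $(1-\sqrt2|\xi|)^2\ge 0$, not $2(1-\sqrt2|\xi|)$; this check is in any case unnecessary for the supersolution property, since you only ever need the max to be $\ge 0$, never $w\le g$.)
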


\begin{proof}
It is straightforward from the definition that $w(x,0) \geq g(x)-s$.

Let $z=x/\lam(t)$. For $|z|<1/\sqrt{2}$, we compute that
\begin{align*}
w_t&=\lam'(t) (W- zW')=\lam'(t)\left(-\sqrt{2} + (1-z^2)^{1/2} - z \frac{-z}{(1-z^2)^{1/2}}\right)\\
&=\lam'(t)\left(-\sqrt{2}+  \frac{1}{(1-z^2)^{1/2}}\right),\\
w_x&=W'= \frac{-z}{(1-z^2)^{1/2}}, \qquad
w_{xx}= W'' \frac{1}{\lam(t)}=\frac{-1}{(1-z^2)^{3/2}} \frac{1}{\lam(t)}.
\end{align*}
By using the above, for $|z| < 1/\sqrt{2}$,
\begin{align*}
&w_t-\frac{w_{xx}}{1+(w_x)^2} - (1+(w_x)^2)^{1/2}\\
=\ & \lam'(t)\left(-\sqrt{2}+  \frac{1}{(1-z^2)^{1/2}}\right) + \frac{1}{\lam(t) (1-z^2)^{1/2}}-\frac{1}{(1-z^2)^{1/2}}\\
= \ &  \lam'(t)\left(-\sqrt{2}+  \frac{1}{(1-z^2)^{1/2}}\right) +\left(\frac{1}{\lam(t)}-1\right) \frac{1}{(1-z^2)^{1/2}}\\
= \ &\lam'(t)\left(-\sqrt{2}+  \frac{2}{(1-z^2)^{1/2}}\right) \geq 0,
\end{align*}
as $\lam'(t) \geq 0$ always for $t \in [0,T]$.

For $1/\sqrt{2}\le |z| \le D$, there is nothing to check as $w$ already touches the obstacle.
\end{proof}

\begin{thm}\label{thm:G1}
There exists $t_0>0$ such that $\cF^-[E](t)=\emptyset$ for all $t\ge t_0$.
\end{thm}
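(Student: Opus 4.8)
The plan is to use the explicit barrier $w$ from Lemma~\ref{lem:G1} near each corner of $E$, and then extinguish the (small) remaining front by comparison with a shrinking ball. Since $E=E(d)$ is invariant under the dihedral group of the square, so is $\cF^-[E](t)$, and it suffices to control the front in a neighbourhood of one corner, say $(d,d)$. Rotating coordinates by $45^\circ$ about that corner (and relabelling) turns a neighbourhood of the corner in $\pl E$ into the graph of $g(x)=-|x|$ over $[-D,D]$, $D=\sqrt2\,d$, the two branches of $g$ being the two edges meeting at the corner; the corresponding piece of $\pl\cF^-[E](t)$ is then governed by the obstacle problem \eqref{obstacle-1}, the obstacle $y\le g$ encoding ``stay inside $E$''.

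The first point to settle is that, although at $t=0$ the front occupies all of $E$ and thus touches the corner, while $w$ requires a positive initial clearance $s$ (indeed $w(\cdot,0)\le g$), the flow detaches from the corner instantaneously. This is where the strict parabolicity of $V=\kap+1$ at a convex corner is used: in graph form the equation is uniformly parabolic, so a Lipschitz corner is smoothed at once, and one shows (e.g.\ by comparison with the self-similar expanding arc of pure curve shortening, the driving term being a bounded perturbation on short time scales) that for every sufficiently small $s\in(0,1-d)$ there is $\del_*=\del_*(s)>0$ such that $\cF^-[E](\del_*)$ lies, in corner coordinates, below $g-s$ near $x=0$; equivalently, at time $\del_*$ the front has pulled away from every corner of $E$ by at least $s$.

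Now fix such an $s$ and let $w,T,\lam$ be as in Lemma~\ref{lem:G1}, so that $g-s\le w(\cdot,0)\le g$. Restarting the evolution at time $\del_*$ and applying the comparison principle for \eqref{obstacle-1} — the values at the endpoints $x=\pm D$ being controlled by the barriers centred at the two adjacent corners, consistently by symmetry — we obtain that near $(d,d)$ the front stays below $w(\cdot,t)$ for $t\in[0,T]$. The graph of $w(\cdot,t)$ coincides with $g$ outside $|x|<\lam(t)/\sqrt2$ and on $|x|<\lam(t)/\sqrt2$ lies below the arc of the circle of radius $\lam(t)$ centred at distance $\sqrt2\,\lam(t)$ from the corner along the inward diagonal. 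Carrying this out at all four corners and using $\lam(T)=d+s/\sqrt2>d$, the four resulting ``cut-corner'' regions overlap along every edge of $E$; hence $\cF^-[E](\del_*+T)$ is detached from all of $\pl E$ and is contained in the intersection of four closed disks of radius $\lam(T)=d+s/\sqrt2$, in particular in a single closed ball $\ol B(q,R_0)$ with $R_0=d+s/\sqrt2<d+s<1$.

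It remains to extinguish this ball. As in Section~\ref{sec:rough-est}, $\cF^-[E](\cdot)$ is a set theoretic subsolution of $V=\kap+1$, while $B(q,R(t))$ with $R(0)=R_0<1$ is an exact solution of $V=\kap+1$ whose radius obeys $\dot R=-1/R+1<0$ and reaches $0$ at the finite time $\tau_0:=-R_0-\log(1-R_0)$. By the avoidance principle, $\cF^-[E](\del_*+T+t)\subset B(q,R(t))$ for all $t\ge0$, which is empty once $t\ge\tau_0$; hence $\cF^-[E](t)=\emptyset$ for all $t\ge t_0:=\del_*+T+\tau_0$, the empty set being stationary. The main obstacle in this argument is the reconciliation of the sharp corner of $E$ at $t=0$ with the smoothed barrier $w$: establishing the short-time detachment quantitatively (Step~2) and then justifying the passage from the one-dimensional graph comparison to the geometric statement about $\cF^-[E]$ — in particular that the graph description persists on $[\del_*,\del_*+T]$ and that the four corner barriers glue along the edges — is the crux; the rest is routine use of the comparison principle and of the explicit ball solution.
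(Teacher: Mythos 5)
Your plan follows the paper's skeleton (the self-similar barrier $w$ from Lemma~\ref{lem:G1}, applied in corner coordinates at each of the four corners, then extinction by a shrinking ball), and you correctly spot the crucial mismatch: $w(\cdot,0)\le g$, so $w$ does \emph{not} dominate the initial datum $g$ of the obstacle problem for $\cF^-[E]$. But your fix for this mismatch --- Step~2, the claim that for any small $s$ there is $\del_*>0$ with $\cF^-[E](\del_*)$ lying below $g-s$ near $x=0$ --- is an unproved, nontrivial short-time detachment lemma, and you yourself flag it as ``the crux.'' That flag is warranted: making it rigorous would require, e.g., constructing an expanding self-similar barrier for $V=\kap$ at a right-angled corner, showing it remains a barrier for $V=\kap+1$ on short times, and then localizing; none of that is in the paper and it is not routine. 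Moreover the detachment you request (below $g-s$ near $x=0$) is stronger than what is actually needed, since $w(\cdot,0)\geq g-s$ has gap at most $(\sqrt2-1)s/2$ at the tip, so even the statement of the lemma you would need is not pinned down.

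The paper sidesteps all of this with a short trick that your proposal misses. Instead of waiting for the front to drop, one \emph{enlarges the obstacle}: consider the obstacle problem with obstacle the larger square $E'=E(d+s/\sqrt2)$ (side $2d+\sqrt2\,s<2$) but with initial set still $E$. In corner coordinates centred at a corner of $E'$, the obstacle graph is $g(x)=-|x|$ and the initial graph (boundary of $E$) is $g(x)-s$ near the corner. Thus $w(\cdot,0)\ge g-s$ is \emph{exactly} what is needed to dominate the initial datum, the comparison principle for the obstacle problem applies from $t=0$ with no detachment step, and monotonicity in the obstacle gives $\cF^-[E](t)\subset$ (solution with obstacle $E'$ started from $E$) $\subset\{y\le w(\cdot,t)\}$. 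From there on your argument (containment in $\ol B(0,\lam(T))$ with $\lam(T)=d+s/\sqrt2<1$, then comparison with the shrinking ball solution of $V=\kap+1$) coincides with the paper's. So: the approach is essentially right, but Step~2 is a genuine gap, and the missing idea is to translate the corner coordinates to a slightly inflated square so that Lemma~\ref{lem:G1}'s hypothesis on $w(\cdot,0)$ is met at time $0$ rather than trying to recover it at a positive time.
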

\begin{proof}
Recall that the original size of the obstacle (square) is $2d \in (\sqrt{2},2)$.
We consider the extended obstacle with size $2d+ \sqrt{2}s <2$. 
By Lemma \ref{lem:G1}, $\cF^-[E](T)$ is contained in a ball of radius $d+s/\sqrt{2}<1$.
We therefore deduce the existence of $t_0>0$ such that $\cF^-[E](t_0)=\emptyset$.
\end{proof}

\medskip
Next, we consider the following obstacle problem: 
\begin{equation}\label{obstacle-2}
\min\left\{ y_t - \frac{y_{xx}}{1+(y_x)^2} - (1+(y_x)^2)^{1/2}, y -g(x) \right\}=0 \quad \text{for} \ (x,t) \in (-D,D) \times (0,\infty).
\end{equation}
Pick $r \in (1,D)$.
We construct a subsolution $v$ of \eqref{obstacle-2} in $(-D,D) \times [0,T]$ for $T>0$
to be chosen such that 
\begin{align*}
&v(x,0)=g(x), \\
& 
(v(x,T)+D)^2+x^2 \geq r^2>1 \
&&\text{for all} \ x\in[-D,D].
\end{align*}

Let $\phi:(-r,r) \times [0,\infty) \to \R$ such that
\[
\phi(x,t) = - 2D + \sqrt{r^2-x^2} + \left(1-\frac{1}{r}\right) t.
\]
It is clear that $\phi$ is a separable subsolution of 
\[
\phi_t -  \frac{\phi_{xx}}{1+(\phi_x)^2} - (1+(\phi_x)^2)^{1/2} = 0 \quad \text{in} \ (-r,r) \times (0,\infty).
\]
Define the function $v:\R\times[0,\infty)\to\R$ by
\begin{equation*}
v(x,t):=
\left\{
\begin{array}{ll}
\max\{-|x|,\phi(x,t)\} &\text{for} \ |x| <r,\\
-|x| &\text{for} \ |x|\ge r.
\end{array}
\right.
\end{equation*}
\begin{center}
\includegraphics[width=80mm]{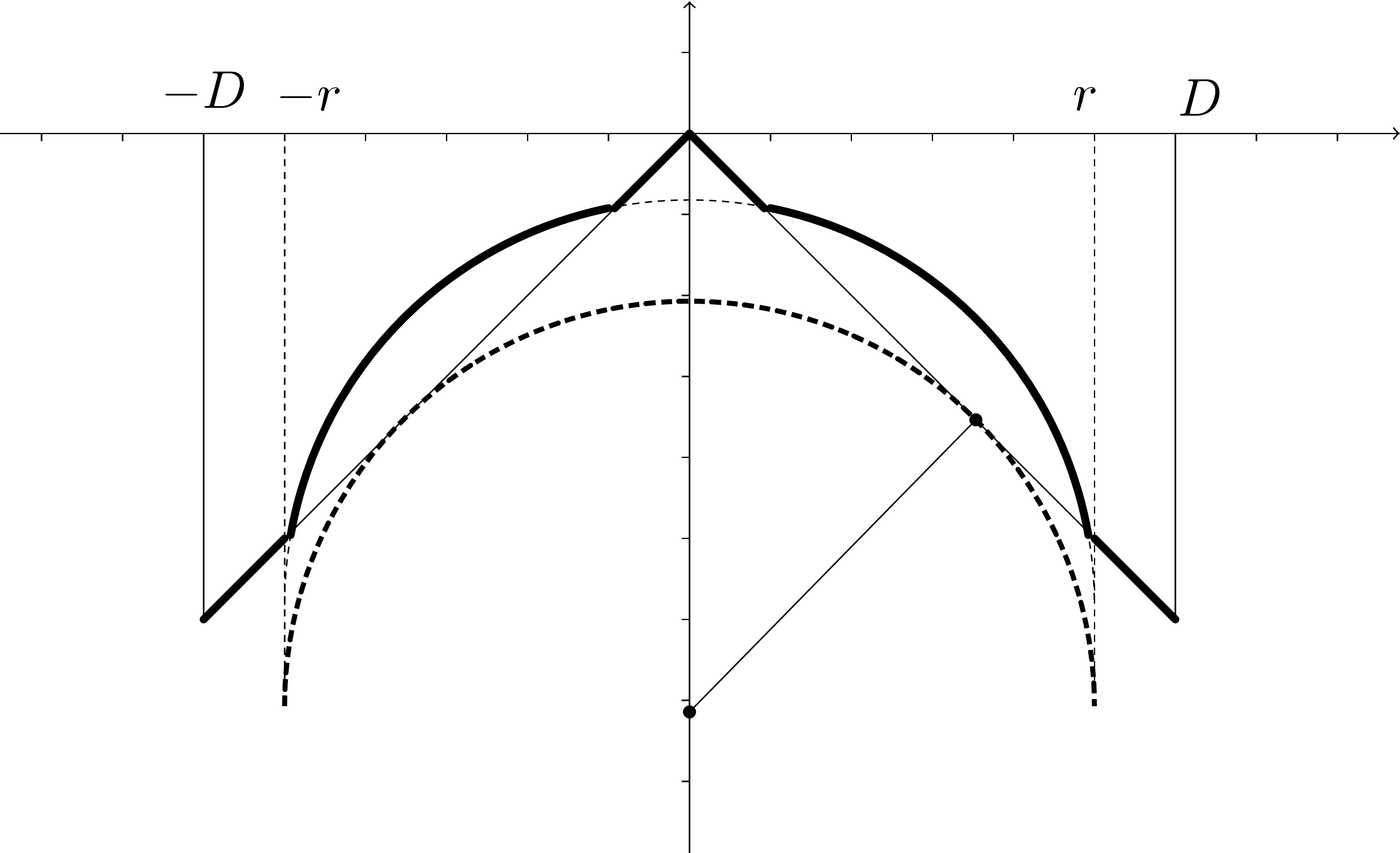}
\captionof{figure}{Picture of $v(x,t)$}\label{Fig4}
\end{center}
\begin{lem}\label{lem:G2}
 Let $T=\frac{r(2D-r)}{r-1}$.
Then the function $v$ defined as above is a subsolution of \eqref{obstacle-2} in $\R \times [0,T]$
with $v(x,0)=-|x|$ in $\R$ and
\begin{equation}\label{G2-grow}
(v(x,T)+D)^2+x^2 \geq r^2>1 \quad 
\text{for all} \ x\in[-D,D].
\end{equation}
\end{lem}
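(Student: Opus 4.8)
The plan is to verify the three assertions of the lemma in turn: that $v(\cdot,0)=g$, that $v$ is a viscosity subsolution of \eqref{obstacle-2} on $\R\times(0,T)$, and that the growth bound \eqref{G2-grow} holds at $t=T$. Since $v=\max\{g,\phi\}$ with $g(x)=-|x|$ and $\phi$ the smooth ``expanding circular cap'', the conceptual point is that the obstacle branch of the $\min$ in \eqref{obstacle-2} absorbs exactly the places where $v$ is not smooth (the concave kink of $-|x|$ at the origin and the seam $\{|x|=r\}$), so the only genuine PDE computation is the one for $\phi$, which is already flagged as ``separable subsolution'' in the text.

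First I would record the elementary bound $\sqrt{r^2-x^2}+|x|\le\sqrt2\,r$ on $|x|\le r$ (Cauchy--Schwarz on the pair $(\sqrt{r^2-x^2},|x|)$, with maximum attained at $|x|=r/\sqrt2$). Since $r<D$ and $\sqrt2<2$, this gives $\sqrt{r^2-x^2}+|x|<2D$, i.e. $\phi(x,0)=-2D+\sqrt{r^2-x^2}<-|x|$ on $(-r,r)$, whence $v(\cdot,0)=\max\{-|x|,\phi(\cdot,0)\}=-|x|$. The same kind of estimate together with $(1-\tfrac1r)t\le 2D-r$ for $t\le T$ shows $\phi(x,t)<-|x|$ for $|x|$ near $r$ and $t\le T$; hence $v\equiv-|x|$ near $\{|x|=r\}$, the two branches of the definition match, and $v$ is continuous (in fact Lipschitz) on $\R\times[0,T]$.

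For the subsolution property I would test against $C^2$ functions touching $v$ from above at a point $(x_0,t_0)\in\R\times(0,T)$ and split into two cases. If $v(x_0,t_0)=-|x_0|=g(x_0)$ — which covers $|x_0|\ge r$, the corner $x_0=0$, and every seam — then $v(x_0,t_0)-g(x_0)=0$, so the $\min$ in \eqref{obstacle-2} is automatically $\le 0$ and nothing must be checked. If $v(x_0,t_0)>g(x_0)$, then by continuity $v=\phi$ in a neighbourhood of $(x_0,t_0)$ (in particular $|x_0|<r$), so any test function from above touches the $C^2$ function $\phi$ from above, and it suffices that $\phi$ be a classical subsolution of $y_t-\tfrac{y_{xx}}{1+(y_x)^2}-(1+(y_x)^2)^{1/2}=0$ on $(-r,r)\times(0,\infty)$. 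With $\phi_x=-x/\sqrt{r^2-x^2}$, $\phi_{xx}=-r^2/(r^2-x^2)^{3/2}$, $1+(\phi_x)^2=r^2/(r^2-x^2)$, one finds
\[
\phi_t-\frac{\phi_{xx}}{1+(\phi_x)^2}-(1+(\phi_x)^2)^{1/2}
=\Bigl(1-\tfrac1r\Bigr)+\frac{1}{\sqrt{r^2-x^2}}-\frac{r}{\sqrt{r^2-x^2}}
=\Bigl(1-\tfrac1r\Bigr)-\frac{r-1}{\sqrt{r^2-x^2}}\le 0,
\]
with equality only at $x=0$ (since $\sqrt{r^2-x^2}\le r$), which closes this case.

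Finally, for \eqref{G2-grow} I would use that $T=\tfrac{r(2D-r)}{r-1}$ is calibrated precisely so that $(1-\tfrac1r)T=2D-r$, giving $\phi(x,T)=-2D+\sqrt{r^2-x^2}+(2D-r)=\sqrt{r^2-x^2}-r$; hence for $|x|\le r$ we get $v(x,T)\ge\sqrt{r^2-x^2}-r$, so $v(x,T)+D\ge\sqrt{r^2-x^2}+(D-r)\ge\sqrt{r^2-x^2}\ge 0$ and therefore $(v(x,T)+D)^2+x^2\ge (r^2-x^2)+x^2=r^2$, while for $r\le|x|\le D$ the bound is trivial since $(v(x,T)+D)^2+x^2\ge x^2\ge r^2>1$. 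I do not expect a serious obstacle here: the only delicate point is the viscosity bookkeeping at the non-smooth points of $v$, resolved (as above) by observing that $v$ equals the obstacle there so the obstacle branch is active; the only other thing to watch is the exact value of $T$, which is forced by the identity $(1-\tfrac1r)T=2D-r$ needed in the last step.
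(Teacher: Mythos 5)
Your proposal is correct and follows essentially the same route as the paper: the paper also disposes of $|x|\ge r$ by noting $v$ coincides with the obstacle there, treats $v$ on $(-r,r)$ as the maximum of two subsolutions (which you unpack into the two cases of the $\min$), and closes by evaluating $\phi(x,T)=\sqrt{r^2-x^2}-r$ via the identity $(1-\tfrac1r)T=2D-r$. Your write-up simply fills in the details the paper leaves implicit (the initial-condition verification $\phi(\cdot,0)<g$, continuity across the seam, and the explicit computation showing $\phi$ is a classical subsolution on $(-r,r)$); there is no gap and no genuinely different idea.
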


\begin{proof}
There is nothing to check in case $|x| \geq r$ as we always have $v(x,t)=-|x|$, which is the obstacle part.

We thus only need to check the case that $|x|<r$. In $(-r,r)\times (0,\infty)$, $v$ is defined as the maximum of
two subsolutions of \eqref{obstacle-2}, which implies that $v$ itself is also a subsolution.

We can easily check that at $T=\frac{r(2D-r)}{r-1}$,
\begin{equation*}
v(x,T):=
\left\{
\begin{array}{ll}
 -r +\sqrt{r^2-x^2} &\text{for} \ |x| <r,\\
-|x| &\text{for} \ |x|\ge r,
\end{array}
\right.
\end{equation*}
which yields \eqref{G2-grow} immediately.
\end{proof}

\begin{thm}\label{thm:G2}
 Assume that $1<D<\sqrt{2}$.
Then $\cF^+[E](t) \to \R^2$ as $t\to \infty$.
\end{thm}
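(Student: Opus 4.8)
The plan is to reduce the statement to the assertion that $\cF^+[E](t_0)$ contains a closed ball of radius strictly larger than $1$ for some $t_0>0$, and then to produce such a ball from Lemma \ref{lem:G2} together with the fourfold symmetry of $E$. For the reduction, suppose $\overline B(q_0,\rho)\subset\cF^+[E](t_0)$ with $\rho>1$. The freely evolving ball $\overline B(q_0,R(s))$, with $R(0)=\rho$ and $\dot R=-R^{-1}+1$ (the radial ODE of Section \ref{sec:sphere} in dimension $n=2$), keeps $R(s)\ge\rho>1$, hence $R(s)$ increases and $R(s)\to\infty$; it is a set-theoretic solution of $V=\kappa+1$. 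Since $\cF^+[E]$ is a set-theoretic supersolution of $V=\kappa+1$ and the two families are ordered at time $t_0$, the comparison principle (see \cite[Chapter 5]{G}) gives $\overline B(q_0,R(s))\subset\cF^+[E](t_0+s)$ for all $s\ge0$, and letting $s\to\infty$ yields $\cF^+[E](t)\to\R^2$.

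To produce the ball I would proceed as in the set-up of Lemma \ref{lem:G2}: after a $45^\circ$ rotation, $E$ is the diamond $\{|\xi|+|\eta|\le D\}$, whose upper boundary, in the coordinates $x=\eta$, $z=\xi-D$, is exactly the graph $z=g(x)=-|x|$ on $|x|\le D$, while its lower boundary is $z=|x|-2D$. With $v$ the subsolution of \eqref{obstacle-2} furnished by Lemma \ref{lem:G2} (extended by $-|x|$ outside $(-r,r)$), set $\Psi(t):=\{(x,z):|x|\le D,\ |x|-2D\le z\le v(x,t)\}$, so $\Psi(0)=E$. The level-set comparison principle for the obstacle problem ``$V=\kappa+1$ with obstacle $E$'' then gives $\Psi(t)\subset\cF^+[E](t)$, and by \eqref{G2-grow} the set $\Psi(T)$ is $E$ with its upper edge replaced by the arc of radius $r\in(1,D)$ bounding the disk $\overline B(c,r)$, $c=(0,-r)$. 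Since the obstacle $E$ is invariant under the dihedral group $D_4$, so is $\cF^+[E](T)$, whence $\cF^+[E](T)\supset E^\sharp:=E\cup\bigcup_{j=1}^{4}\bigl(\overline B(c_j,r)\setminus E\bigr)$, where $c_1,\dots,c_4$ is the $D_4$-orbit of $c$.

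At this point the hypothesis $D<\sqrt2$ enters. One checks by an elementary computation with circles and lines that the part of $\overline B(c_1,r)$ lying outside $E$ can escape $E$ only through the diamond edges adjacent to (and, when $r$ is close to $D$, slightly through the edges opposite to) the vertex nearest $c_1$, and that each such piece is contained in the corresponding $\overline B(c_j,r)\setminus E$; consequently $\overline B(c_1,r)\subset E^\sharp\subset\cF^+[E](T)$, a ball of radius $r>1$. (If one prefers to avoid this bookkeeping, one iterates the construction finitely many times via the semigroup property: each pass enlarges the four arcs, and since they all have radius $>1$ they keep growing until a ball of radius $>1$ is engulfed.) In either case the reduction above applies with $q_0=c_1$, $\rho=r$, $t_0=T$, which proves the theorem.

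I expect the main obstacle to be the rigorous passage from the one-dimensional subsolution $v$ of the graph obstacle problem \eqref{obstacle-2} to the two-dimensional inclusion $\Psi(t)\subset\cF^+[E](t)$. Away from the two ``side'' vertices of the diamond, $\Psi$ is an honest set-theoretic subsolution of $V=\kappa+1$ (its lower boundary is fixed but there $V\ge1>0$, and its upper boundary moves by the subsolution equation for \eqref{obstacle-2}); at those two vertices $\Psi$ coincides with $\partial E$ and is not a subsolution of the free flow, and the inclusion has to be rescued there by the obstacle constraint $\cF^+[E](t)\supset\inter E$, for instance through a localized comparison argument on the complement of small neighbourhoods of those vertices. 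The circle--line bookkeeping needed in the geometric step is the other, purely elementary, point requiring care; all the remaining verifications (the subsolution property of $v$, the radial ODE estimate, the $D_4$-invariance of $\cF^+[E]$) are routine.
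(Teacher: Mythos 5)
Your proof follows the same two-step strategy as the paper: produce a ball of radius greater than $1$ inside $\cF^+[E](T)$ from Lemma~\ref{lem:G2} together with the $D_4$-symmetry of the diamond, and then let this ball expand to $\R^2$ under the free flow by the set-theoretic comparison principle. You are in fact considerably more careful than the paper's two-sentence proof, which simply asserts the ball inclusion without the symmetrization or the circle--line verification; the observation that $\ol B(c_1,r)$ may exit $E$ through the two edges opposite the top vertex, and that those escaping pieces land inside the cap attached to the bottom vertex, is exactly the check that the paper leaves implicit, and your flagged concern about the passage from the one-dimensional graph subsolution of \eqref{obstacle-2} to the set inclusion $\Psi(t)\subset\cF^+[E](t)$ (localizing away from the two side vertices) is also a real point that the paper does not address.

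One small logical reordering: $D_4$-invariance by itself gives $\cF^+[E](T)\supset E\cup\bigcup_j \mathrm{cap}_j$, where $\mathrm{cap}_j$ denotes the lens between the arc of $\pl B(c_j,r)$ and the two edges meeting at the $j$th vertex; the identity of this union with your $E^\sharp$ is the \emph{conclusion} of the elementary circle--line computation (namely that $\ol B(c_j,r)\setminus E\subset\bigcup_k \mathrm{cap}_k$), not a consequence of symmetry, so state the circle--line step before, not after, introducing $E^\sharp$. The alternative ``iterate the construction'' route you mention in parentheses is less convincing as written, since after one pass the free boundary of $\cF^+[E](T)$ is no longer the graph $-|x|$ and Lemma~\ref{lem:G2} does not directly apply again; the direct bookkeeping is the cleaner way to go and suffices.
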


\begin{proof}
 By Lemma \ref{lem:G2}, we have that $\cF^+[E](T)$ contains a ball of radius $r>1$. 
As this ball of radius $r$ expands to $\R^2$ under the forced mean curvature flow $V=\kappa+1$
as time goes to infinity, we get the conclusion.
\end{proof}

\begin{rem}
If we consider the case where $d<1/\sqrt{2}$, then 
we can easily check that 
the function $v$ defined by $v(x,t):=\sqrt{1-x^2}$ for all 
$(x,t)\in[-D,D]\times[0,\infty)$ is a supersolution of \eqref{obstacle-2} 
which is static. Therefore, the solution cannot grow up, which we have already 
got in Proposition \ref{prop:cor-square}. 
On the other hand, if we consider the case where $d>1/\sqrt{2}$, 
then we cannot avoid to have shocks as in Figure \ref{Fig4}, 
and therefore we cannot construct 
a static supersolution. 
\end{rem}

In light of Theorems \ref{thm:G1}, \ref{thm:G2}, we conclude that
\begin{prop} \label{prop:middle}
Assume that $1/\sqrt{2}<d<1$. Then there exists $\al, \beta$ such that $0<\al<\beta<c$ and
\[
\al \leq \liminf_{t \to \infty} \frac{u(x,t)}{t} \leq \limsup_{t \to \infty} \frac{u(x,t)}{t} \leq \beta \quad \text{locally uniformly for} \ x \in \R^2.
\]
\end{prop}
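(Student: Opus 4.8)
The plan is to obtain Proposition \ref{prop:middle} by feeding the geometric information of Theorems \ref{thm:G1} and \ref{thm:G2} into the general rough-estimate machinery of Section \ref{sec:rough-est}, that is, the Global upper estimate (Theorem \ref{thm:upper-esti}) and the Global lower estimate (Theorem \ref{thm:lower-esti}). The role of the hypothesis $1/\sqrt2<d<1$ is precisely that the half-diagonal $D=\sqrt2\,d$ then satisfies $1<D<\sqrt2$, so that neither branch of Corollary \ref{prop:case4} applies — $E=E(d)$ neither sits inside a ball $B(y,n-1)=B(y,1)$ nor contains a closed ball $\overline B(y,1)$ — and one genuinely has to pass through assumptions (G1) and (G2).

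First I would verify (G1). Since $1<D<\sqrt2$, we may fix $s>0$ small with $d+s<1$, and Lemma \ref{lem:G1} produces a supersolution $w$ of the graph obstacle problem \eqref{obstacle-1} associated with the enlarged square of side $2d+\sqrt2\,s<2$; the argument of Theorem \ref{thm:G1} shows that the front $\cF^-$ of this enlarged obstacle lies in a ball of radius $d+s/\sqrt2<1$ at a finite time, hence vanishes by some $t_0>0$. Taking $D$ in (G1) to be the open enlarged square $\mathcal O$ (which contains $E$ since $s>0$) together with this $t_0$, and using monotonicity of $\cF^-$ in the obstacle, $\cF^-[E](t_0)\subset\cF^-[\overline{\mathcal O}](t_0)=\emptyset$, so (G1) holds. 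Assumption (G2) is literally the statement of Theorem \ref{thm:G2}: as $1<D<\sqrt2$, one has $\cF^+[E](t)\to\R^2$ as $t\to\infty$.

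With (G1), Theorem \ref{thm:upper-esti} supplies $b\in(0,c)$ with $\limsup_{t\to\infty}(\sup_{x\in\R^2}u(x,t)/t)\le b$; with (G2), Theorem \ref{thm:lower-esti} together with the Remark following it supplies $a=a_n>0$, independent of $R$, with $\liminf_{t\to\infty}(\inf_{x\in B(0,R)}u_*(x,t)/t)\ge a$ for every $R\ge n=2$. Since the maximal solution is upper semicontinuous, $u=u^*\ge u_*$, so the same lower bound holds with $u$ in place of $u_*$; in particular $0<a\le b<c$ by comparing the two bounds on a fixed ball. Setting $\al:=a$ and picking any $\beta\in(b,c)$ then gives $0<\al<\beta<c$, and the stated double inequality holds locally uniformly in $\R^2$ — uniformly in $x$ for the $\limsup$ part, and uniformly on each ball $B(0,R)$ for the $\liminf$ part, since any compact set lies in some $B(0,R)$. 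The only delicate point in this last assembly is checking that the construction behind Theorem \ref{thm:G1} genuinely yields the open-neighborhood form of (G1) rather than merely $\cF^-[E](t)=\emptyset$; the substantive difficulties have all been absorbed into Theorems \ref{thm:G1} and \ref{thm:G2}, so this final step is essentially bookkeeping with the comparison principle.
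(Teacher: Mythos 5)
Your proof is correct and follows the same route the paper intends: verify (G1) and (G2) for the square via Theorems \ref{thm:G1} and \ref{thm:G2} (the proof of the former already works with an enlarged obstacle, so the open-neighborhood form required by (G1) is available), then feed these into Theorem \ref{thm:upper-esti} and Theorem \ref{thm:lower-esti} together with its accompanying Remark. The paper itself gives no written proof of Proposition \ref{prop:middle} — it simply asserts the conclusion ``in light of Theorems \ref{thm:G1}, \ref{thm:G2}'' — and your write-up supplies exactly the omitted bookkeeping, including the genuinely subtle observation that (G1) needs the extended-obstacle version of Theorem \ref{thm:G1} rather than its literal statement $\cF^-[E](t_0)=\emptyset$.
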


In a similiar manner to Theorem \ref{thm:G2}, we can get 
a slightly general result. 
\begin{prop}\label{prop:general}
Assume that, upon relabeling and reorienting the coordinates axes if necessary, there exist 
$l>1$ and functions $g_1, g_2 \in C([-l,l],\R)$ such that $g_1<g_2$ on $[-l,l]$ and
\[
\{(x,y)\,:\, x \in [-l,l] \quad \text{and} \quad g_1(x)\le y\le g_2(x) \} \subset E.
\] 
Then $\cF^{+}[E](t)\to\R^2$ as $t\to\infty$. 
\end{prop}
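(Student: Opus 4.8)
The plan is to follow the scheme of Theorems \ref{thm:G1}--\ref{thm:G2}, with the symmetric square cross-section replaced by the general strip. After fixing an orientation in which the hypothesis holds, write $\Sigma:=\{(x_1,x_2):|x_1|\le l,\ g_1(x_1)\le x_2\le g_2(x_1)\}\subseteq E$. First I would reduce the statement twice. Since the obstacle front operator $\cF^+$ is monotone in its obstacle (a consequence of the comparison principle for the obstacle problem; see \cite[Chapter 5]{G}), $\cF^+[\Sigma](t)\subseteq\cF^+[E](t)$ for all $t\ge0$, so it suffices to prove $\cF^+[\Sigma](t)\to\R^2$. Next, exactly as in the proof of Theorem \ref{thm:G2}, it is enough to exhibit one time $t_0>0$ and a closed ball $\overline B(z_0,r_0)$ with $r_0>1$ such that $\overline B(z_0,r_0)\subseteq\cF^+[\Sigma](t_0)$: by the radial analysis of Section \ref{sec:sphere} such a ball expands under $V=\kappa+1$ to all of $\R^2$ (its radius obeys $\dot R=-1/R+1>0$ with $R(t)\to\infty$), and since $\cF^+[\Sigma]$ is, by construction, a set-theoretic supersolution of $V=\kappa+1$, the comparison principle forces $\cF^+[\Sigma](t)$ to contain this expanding ball for all $t\ge t_0$.

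To produce such a ball I would fix $r\in(1,l)$ --- this is the only place the hypothesis $l>1$ enters --- and build, as in Lemma \ref{lem:G2}, explicit barriers for the top and bottom of $\cF^+[\Sigma]$ via the one-dimensional graph obstacle problem on $(-l,l)\times(0,\infty)$. Put $m:=\min_{[-l,l]}g_1$ and $M:=\max_{[-l,l]}g_2$ (both finite). With the separable subsolution $\phi(x,t)=A+(1-1/r)\,t+\sqrt{r^2-x^2}$ of $\phi_t-\frac{\phi_{xx}}{1+\phi_x^2}-(1+\phi_x^2)^{1/2}=0$ (the computation being identical to the one in the proof of Lemma \ref{lem:G2}), define $v(x,t):=\max\{\phi(x,t),g(x)\}$ for $|x|<r$ and $v(x,t):=g(x)$ for $r\le|x|\le l$, where $g$ is a fixed continuous function with $g\le g_2$ on $[-l,l]$, chosen --- e.g.\ $g(x):=m-1-|x|$ --- so that $g$ meets the cap $\phi$ at $|x|=r$. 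Taking $A$ sufficiently negative and $T$ the first time with $\phi(\pm r,T)=g(\pm r)$, one checks as in Lemma \ref{lem:G2} that $v$ is continuous on $[-l,l]\times[0,T]$, is a viscosity subsolution of the relevant (top) obstacle problem --- hence also of the version with the true obstacle $g_2$, since $g\le g_2$ --- and that $v(\cdot,0)\le g_2$ while at time $T$ the graph $y=v(x,T)$ lies above the upper half of a circle of radius $r$. A mirror-image construction for the lower boundary (a ``descending cap'' with initial datum $\le g_1$) gives a second subsolution $w$ with $y=w(x,T)$ lying below the lower half of a circle of the same radius $r$, after the free constants are enlarged so that both constructions use the common time $T$. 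Because $\cF^+[\Sigma](t)\supseteq\inter\Sigma$ keeps the set connected across $|x_1|<l$, these two barriers give $\cF^+[\Sigma](T)\supseteq\{(x_1,x_2):|x_1|\le l,\ w(x_1,T)\le x_2\le v(x_1,T)\}$, and choosing the translation constants so that the two caps are concentric makes this region contain a closed ball of radius $r>1$. Combined with the first paragraph, this proves the proposition.

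The hard part will be the geometric bookkeeping in the second step: choosing $A$, $T$, $g$, and the translation constants so that, simultaneously, (i) the initial caps lie below $g_2$ and above $g_1$ respectively --- which uses the finiteness of $M$ and $m$ and the freedom to push the caps arbitrarily far down, resp.\ up, by enlarging the additive constants; (ii) the patched functions $v$ and $w$ are genuine viscosity subsolutions across the junctions $|x|=r$, and the associated filled regions are set-theoretic subsolutions of $V=\kappa+1$ (the junctions are convex corners for the relevant sub/superlevel sets, handled exactly as in the proof of Lemma \ref{lem:G2}); and (iii) the region trapped between $y=w(\cdot,T)$ and $y=v(\cdot,T)$ genuinely lies inside $\cF^+[\Sigma](T)$. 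Point (iii) --- which in the square case is precisely the step in the proof of Theorem \ref{thm:G2} asserting that $\cF^+[E](T)$ contains a ball of radius $r$ --- is what forces the lower-boundary construction and the choice of a large common time $T$, so that the bottom of $\cF^+[\Sigma]$ has had time to recede below the bottom of the sought ball. Everything else is a transcription of Theorems \ref{thm:G1}--\ref{thm:G2}.
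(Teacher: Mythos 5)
Your proposal is correct and follows exactly the route the paper intends: the paper's ``proof'' of Proposition~\ref{prop:general} is just the remark that it goes ``in a similar manner to Theorem~\ref{thm:G2},'' and your monotonicity reduction $\cF^+[\Sigma]\subseteq\cF^+[E]$, the two translated cap subsolutions for the graph obstacle problem on $(-l,l)$ yielding a ball of radius $r\in(1,l)$ inside $\cF^+[\Sigma](T)$, and the final ball-expansion comparison are precisely the adaptation of Lemma~\ref{lem:G2} and Theorem~\ref{thm:G2} that this remark points to. In effect you have supplied the details the paper chose to omit.
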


\begin{rem} \label{rem:two-cir}
One important problem in the crystal growth literature is to understand 
the large time average of the crystal growth with two sources as well as the interaction between the sources. 
In the case that the two sources are the same and of circular shape, the equation becomes
\begin{equation}\label{eq:two-source}
\begin{cases}
\displaystyle 
u_t-\left(\Div\left(\frac{Du}{|Du|}\right)+1\right)|Du|
=c \mathbf{1}_{B((a,0),R_0)}+c \mathbf{1}_{B((-a,0),R_0)}
 \quad &\text{in} \ \R^2\times(0,\infty), \\
u(\cdot,0)=0 \quad &\text{on} \ \R^2,  
\end{cases}
\end{equation}
where $a, R_0>0$ are given constants. 
We assume further that $R_0>a$, which means that the two sources overlap.
As a corollary to Proposition \ref{prop:general}, and Theorem \ref{thm:lower-esti}, 
we get 
\[
\liminf_{t\to\infty} \frac{u(x,t)}{t}\ge\al \quad \text{locally uniformly for} \ x \in \R^2,  
\]
for some $\al>0$
if and only if $a+R_0>1$. 
This is a tiny partial result toward this direction, which remains rather open so far.
At least, we are able to give a first condition to have a locally uniform growth as $t\to \infty$.
\end{rem}

\section{Conclusion}\label{sec:conclude}
We first established a well-posedness result 
for maximal viscosity solution of \eqref{eq:2} in Section \ref{sec:well}.
Note that we are not able to prove uniqueness of viscosity solutions because of the discontinuity of 
the right hand side of \eqref{eq:2} (see Remark 1).
We believe that the maximal solution is the correct physical solution.

In the spherically symmetric setting, we provide a complete analysis to understand the behavior
of the solution of (C) and its large time average in 
Propositions \ref{prop:case1}, \ref{prop:case2}, and \ref{prop:case3}.
We also study the case of inhomogeneous source $f$ of \eqref{eq:2} in Theorem \ref{thm:h1}.

In the nonspherically symmetric case, 
it seems extremely hard to obtain the precise large time average of the maximal viscosity solution of (C).  
For instance, if we consider the square case and use the Trotter-Kato product formula, 
then we could see clearly how complicated the behavior of the pyramid is as in Figure \ref{fig:square}.
This is of course  crucially different from that of the spherically symmetric case which we observed in Section \ref{sec:TK}.  
\begin{figure}[H]
  \begin{center}
    \begin{tabular}{c}
      \begin{minipage}{0.3\hsize}
        \begin{center}
          \includegraphics[clip, width=3cm]{./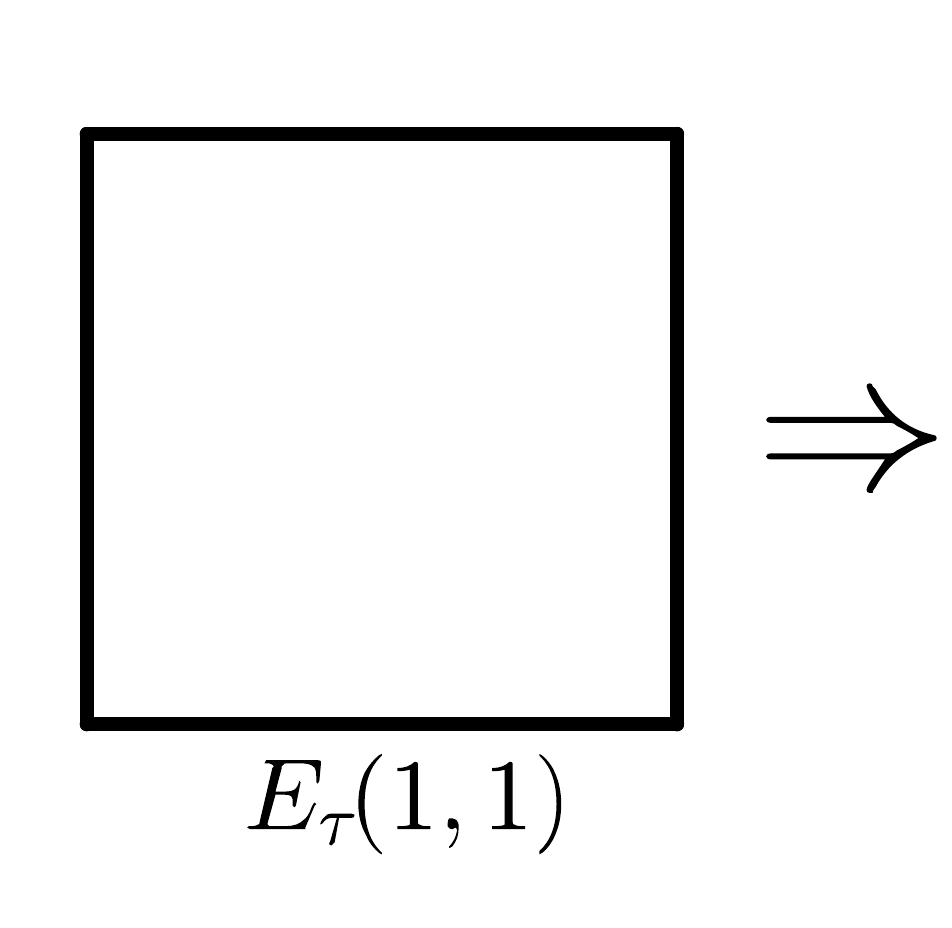}
        \end{center}
      \end{minipage}
      \begin{minipage}{0.3\hsize}
        \begin{center}
          \includegraphics[clip, width=3.5cm]{./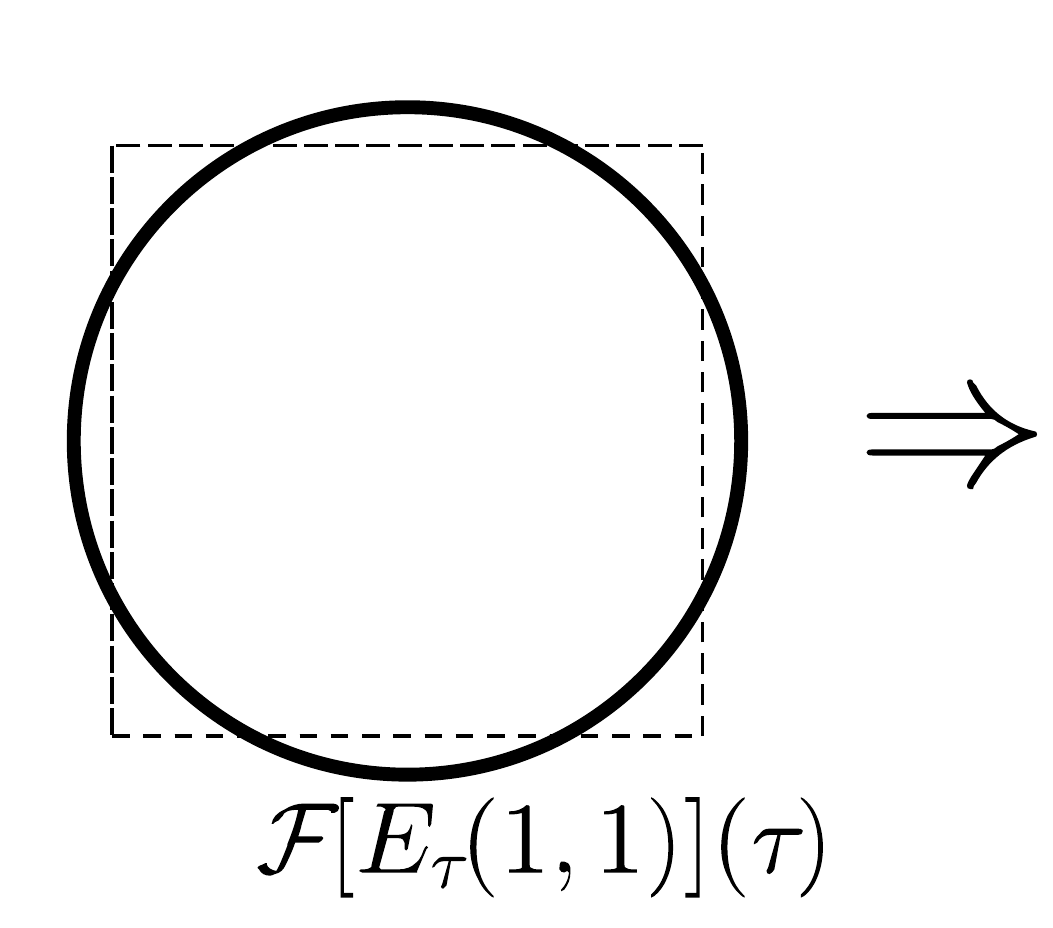}
          \end{center}
      \end{minipage}

      \begin{minipage}{0.3\hsize}
        \begin{center}
          \includegraphics[clip, width=3cm]{./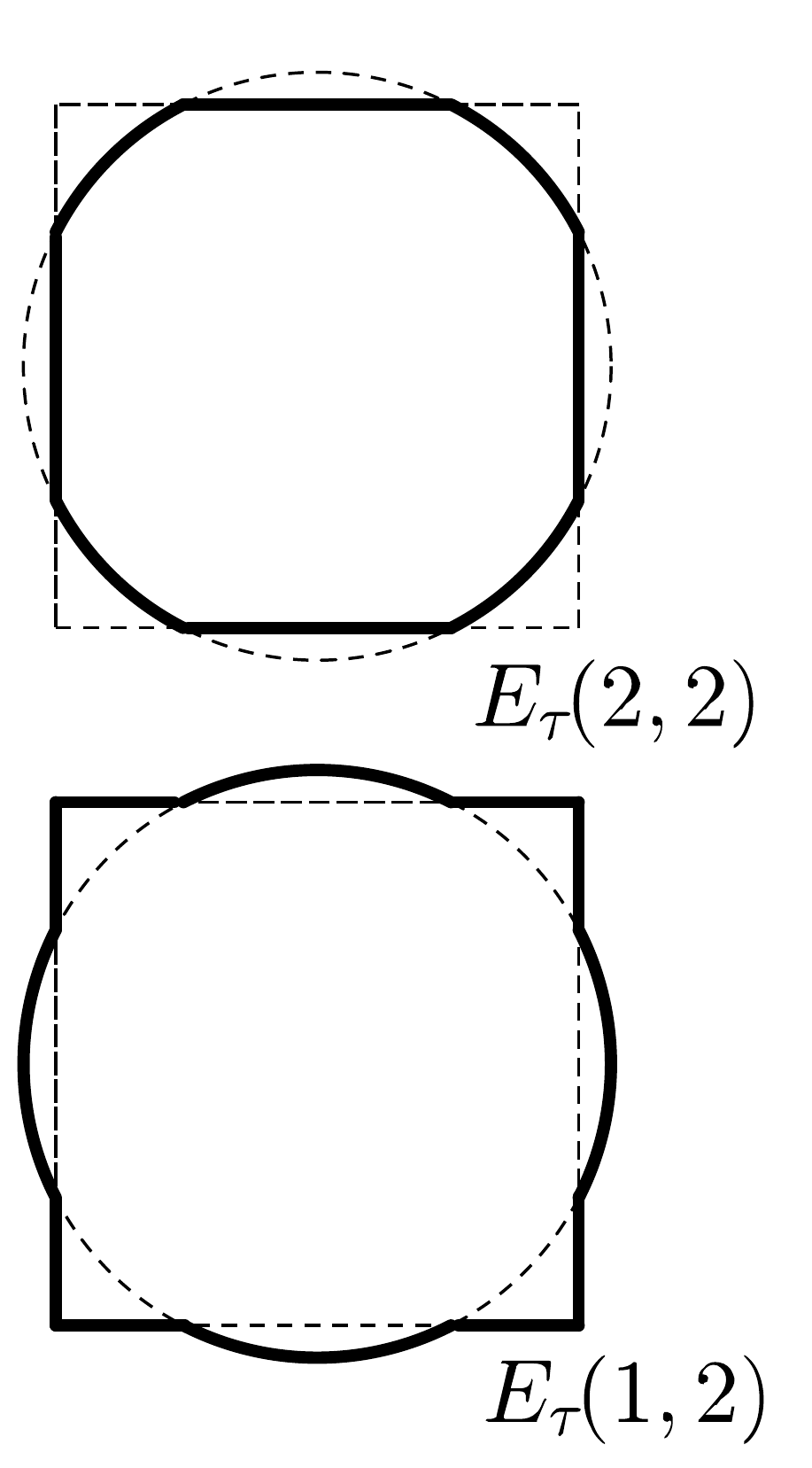}
        \end{center}
      \end{minipage}
    \end{tabular}
\caption{}
    \label{fig:square}
  \end{center}
\end{figure}
In the square case, we completely understood the case where $d<1/\sqrt{2}$ and $d>1$ 
in Proposition \ref{prop:cor-square} as a corollary of Proposition \ref{prop:case4}.
In the case where $1/\sqrt{2}<d<1$, we could only understand the behaviors of the top and the bottom of the pyramid and 
achieve $\liminf$ and $\limsup$ results in Proposition \ref{prop:middle}.
Note that we choose the square case here just to make it a clear representative.
Similar results hold for the rectangle and ellipse cases.

In order to understand the precise large time behavior of solutions, we 
somehow need to understand clearly the behavior of the middle layers of the pyramid.
So far we do not yet have tools to analyze these layers.

We are able to get a first nontrivial result for the case where $E$
is the union of the two balls of same size in Remark \ref{rem:two-cir}. 
The precise growth speed in this case however is still completely open.




\section{Appendix}
\begin{lem}\label{lem:jet-radial}
Let $\psi:[0,\infty) \to \R$ be a continuous function, which is $C^2$ in $(0,R) \cup (R,\infty)$ 
for some given $R>0$. Assume further that
\[
 \psi'(R-)=a \quad \text{and} \quad \psi'(R+)=b.
\]
The followings hold
\begin{itemize}
 \item[(i)] If $a<b$ then for any $\phi\in C^2(\R^n)$ such that $\psi(|x|)-\phi(x)$ has a strict minimum at 
$x_0 \in \partial B(0,R)$, then for some $s \in [a,b]$, 
\begin{align*}
D\phi(x_0)=s\frac{x_0}{R}, \quad \text{and} \quad
\tr[\sig(D\phi(x_0))D^2\phi(x_0)]\leq 
\frac{(n-1)s}{R}.
\end{align*}

\item[(ii)] If $a>b$ then for any $\phi\in C^2(\R^n)$ such that $\psi(|x|)-\phi(x)$ has a strict maximum at 
$x_0 \in \partial B(0,R)$, then for some $s \in [b,a]$, 
\begin{align*}
D\phi(x_0)=s\frac{x_0}{R}, \quad \text{and} \quad
\tr[\sig(D\phi(x_0))D^2\phi(x_0)]\geq 
\frac{(n-1)s}{R}. 
\end{align*}
\end{itemize}
\end{lem}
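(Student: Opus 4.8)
The plan is to reduce to the model point $x_0=Re_1$ using the rotational invariance of $x\mapsto\psi(|x|)$, to determine $D\phi(x_0)$ by slicing the touching inequality along the radial direction and along the tangential directions, and to extract the trace inequality from the second variation of $\phi$ along a great circle through $x_0$. It suffices to prove (i): given (i), statement (ii) follows at once by applying (i) to the pair $(-\psi,-\phi)$, which turns the strict maximum into a strict minimum, sends $a<b$ to $-a<-b$, leaves $\sig(\cdot)$ unchanged since $\sig(p)=\sig(-p)$, and reverses both inequalities in the conclusion.

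So assume we are in case (i). Since $x\mapsto\psi(|x|)$ is invariant under rotations about the origin, composing $\phi$ with a suitable orthogonal transformation $Q$ we may assume $x_0=Re_1$; the conclusion transforms covariantly, because $\tr$ is conjugation invariant and $\sig(Qp)=Q\sig(p)Q^{T}$. Set $s:=\partial_{x_1}\phi(x_0)$. Testing $\psi(|x|)-\phi(x)\ge\psi(R)-\phi(x_0)$ along the outward ray $x=(R+t)e_1$, dividing by $t>0$ and letting $t\to0^{+}$, gives $s\le\psi'(R+)=b$; the same computation along the inward ray $x=(R-t)e_1$ gives $s\ge\psi'(R-)=a$, so $s\in[a,b]$. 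Next, for a unit vector $e\perp e_1$ consider the great circle $\gam(t):=R(\cos(t/R)e_1+\sin(t/R)e)$, which satisfies $|\gam(t)|\equiv R$, $\gam(0)=x_0$, $\gam'(0)=e$ and $\gam''(0)=-e_1/R$. Since $\psi(|\gam(t)|)$ is constant in $t$, the function $t\mapsto\phi(\gam(t))$ has a strict local maximum at $t=0$. Vanishing of its first derivative forces $D\phi(x_0)\cdot e=0$ for every $e\perp e_1$, hence $D\phi(x_0)=se_1=s\,x_0/R$; nonpositivity of its second derivative, which equals $\langle D^2\phi(x_0)e,e\rangle+D\phi(x_0)\cdot\gam''(0)=\langle D^2\phi(x_0)e,e\rangle-s/R$, gives $\langle D^2\phi(x_0)e,e\rangle\le s/R$.

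It remains to sum this tangential bound. When $s\ne0$ we have $\sig(D\phi(x_0))=\sig(se_1)=I-e_1\otimes e_1$, the orthogonal projection onto $e_1^{\perp}$; choosing an orthonormal basis $e_2,\dots,e_n$ of $e_1^{\perp}$ yields $\tr[\sig(D\phi(x_0))D^2\phi(x_0)]=\sum_{j=2}^{n}\langle D^2\phi(x_0)e_j,e_j\rangle\le(n-1)s/R$, which is the assertion; the argument for (ii) is identical after the sign flip described above. The one case needing a separate word is $s=0$, which can occur when $0\in[a,b]$: then $D\phi(x_0)=0$ and the quantity $\tr[\sig(D\phi(x_0))D^2\phi(x_0)]$ must be read through the relaxed envelope $H_\ast$ (respectively $H^\ast$ in case (ii)), and the tangential bounds $\langle D^2\phi(x_0)e_j,e_j\rangle\le0$ already provide what the applications need, exactly as in the proof of Lemma~\ref{lem:psi-ep}. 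Note that the proof uses only continuity of $\psi$ and the one-sided derivatives at $R$, not the full $C^2$ regularity. The step demanding the most care is the identification of $D\phi(x_0)$: one must combine the radial one-sided slices (which pin $s$ into the corner interval $[a,b]$) with all tangential great-circle slices before the identity $D\phi(x_0)=s\,x_0/R$ can be concluded; once that structure is in hand, the trace inequality is a one-line Taylor expansion.
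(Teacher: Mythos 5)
Your proof is correct, and the underlying strategy matches the paper's: normalize to $x_0=Re_1$, identify $D\phi(x_0)=se_1$ with $s\in[a,b]$, establish the tangential Hessian bound, and sum. The implementation differs in a way worth noting. The paper first reduces to quadratic test functions, asserts without argument that $D\phi(x_0)=s\,x_0/R$ with $s\in[a,b]$ is ``straightforward,'' and then extracts $2a^{ii}\le s/R$ by algebraic manipulation of the constraint $\phi\le 0$ on the sphere $\{|x|=R\}$ followed by a limit $x_1\to R$. You instead work directly with general $C^2$ test functions: one-sided radial slices pin $s\in[a,b]$, great-circle slices $\gam(t)=R(\cos(t/R)e_1+\sin(t/R)e)$ force $D\phi(x_0)\cdot e=0$ and give $\langle D^2\phi(x_0)e,e\rangle\le s/R$ via the chain rule and the curvature term $D\phi(x_0)\cdot\gam''(0)=-s/R$. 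This supplies the identification of $D\phi(x_0)$ that the paper leaves unproved, avoids the quadratic reduction entirely, and correctly observes that only continuity of $\psi$ plus the one-sided derivatives at $R$ are used. You also flag the genuine degeneracy of $\sig$ at $p=0$ when $0\in[a,b]$, which the paper's statement and proof both gloss over (the paper writes $\sig(p)$ with $p=se_1$ without excluding $s=0$); your remark that the applications only need the tangential bounds through $H_\ast,H^\ast$ is the right way to read the lemma in that degenerate case. Finally, your reduction of (ii) to (i) via $(\psi,\phi)\mapsto(-\psi,-\phi)$, using $\sig(-p)=\sig(p)$, is clean and correct, and is more economical than the paper's implicit ``similar argument'' for the second part.
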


\begin{proof}
We only prove (i). Without loss of generality, assume $x_0=R e_1 = (R,0,\ldots,0)$ and $\psi(R)=0$.
We only need to consider $\phi$ of the quadratic form
\[
\phi(x)=p\cdot(x-x_0)+A(x-x_0)\cdot (x-x_0),
\]
where $A=(a^{ij})$ is a symmetric matrix. It is straightforward that
\[
D\phi(x_0)=p=s\frac{x_0}{R}=se_1 \quad \text{for some} \ s \in [a,b].
\]
We hence can rewrite $\phi(x) \leq 0$ for $|x|=R$ as
\begin{equation}\label{jet-1}
\phi(x)=
s(x_1-R) + a^{ij}(x_i-\del_{i1}R)(x_j-\del_{j1}R) \leq 0 \quad \text{for} \ x_1^2+\cdots x_n^2=R^2, 
\end{equation}
where $\del_{ij}=0$ if $i\not=j$, and $\del_{ii}=1$. 
Let $x_3=\cdots=x_n=0$ in the above to yield that, for $x_1^2+x_2^2=R^2$,
\[
s(x_1-R)+a^{11}(x_1-R)^2 + 2 a^{12}(x_1-R)x_2 + a^{22}(R^2-x_1^2) \leq 0,
\]
which can be simplified further as
\[
a^{11}(R-x_1) - 2 a^{12} x_2 + a^{22}(R+x_1) \leq s.
\]
Letting $x_1 \to R$ (which also means that $x_2 \to 0$) to deduce that
\begin{equation*}
2a^{22} \leq \frac{s}{R}.
\end{equation*}
By using \eqref{jet-1} in similar ways, we end up with
\begin{equation*}
2 \sum_{i=2}^n a^{ii} \leq \frac{(n-1)s}{R}.
\end{equation*}
Note finally that
\newcommand{\BigFig}[1]{\parbox{12pt}{\LARGE #1}}
\newcommand{\BigZero}{\BigFig{0}}
\[
\sig(p)=I-\frac{p\otimes p}{|p|^2}
={\scriptsize 
\left(
\begin{array}{ccccc}
0 & 0 &  \cdots & 0 \\
0 & 1& \cdots & 0 \\
\vdots & \vdots & \ddots &  \vdots\\
0 & 0 & \cdots & 1 
\end{array}
\right), 
}
\]
and thus
\[
\tr[\sig(D\phi(x_0))D^2\phi(x_0)]=\sum_{i=2}^n \phi_{x_i x_i}(x_0)=2 \sum_{i=2}^n a^{ii} \leq \frac{(n-1)s}{R}.
\qedhere
\]
\end{proof}

\bibliographystyle{amsplain}
\providecommand{\bysame}{\leavevmode\hbox to3em{\hrulefill}\thinspace}
\providecommand{\MR}{\relax\ifhmode\unskip\space\fi MR }
\providecommand{\MRhref}[2]{%
  \href{http://www.ams.org/mathscinet-getitem?mr=#1}{#2}
}
\providecommand{\href}[2]{#2}

\end{document}